\def\@tocline#1#2#3#4#5#6#7{\relax
  \ifnum #1>\c@tocdepth 
  \else
    \par \addpenalty\@secpenalty\addvspace{#2}%
    \begingroup \hyphenpenalty\@M
    \@ifempty{#4}{%
      \@tempdima\csname r@tocindent\number#1\endcsname\relax
    }{%
      \@tempdima#4\relax
    }%
    \parindent\z@ \leftskip#3\relax \advance\leftskip\@tempdima\relax
    \rightskip\@pnumwidth plus4em \parfillskip-\@pnumwidth
    #5\leavevmode\hskip-\@tempdima
      \ifcase #1
       \or\or \hskip 1em \or \hskip 2em \else \hskip 3em \fi%
      #6\nobreak\relax
    \hfill\hbox to\@pnumwidth{\@tocpagenum{#7}}\par
    \nobreak
    \endgroup
  \fi}
\newcommand{\GL}{\mathrm{GL}}
\newcommand{\GSp}{\mathrm{GSp}}
\newcommand{\Sp}{\mathrm{Sp}}
\newcommand{\F}{\mathbb{F}}
\newcommand{\GU}{\mathrm{GU}}
\newcommand{\Gm}{\mathbb{G}_{m}}
\newcommand{\ctext}[1]{\raise0.2ex\hbox{\textcircled{\scriptsize{#1}}}}
  \def\MR#1{}
\newcommand{\co}{\mathcal{O}}
\newcommand{\p}{\mathfrak{p}}
\newcommand{\ow}{&\quad&\text{otherwise }}
\newcommand{\wh}{&\quad&\text{when }}
\newcommand{\Z}{\mathbb{Z}}
\newcommand{\PP}{\mathbb{P}}
\DeclareMathOperator{\diag}{diag}
\DeclareMathOperator{\Ann}{Ann}
\DeclareMathOperator{\ord}{ord}
\DeclareMathOperator{\Res}{Res}
\DeclareMathOperator{\chara}{char}
\DeclareMathOperator{\row}{row}
\DeclareMathOperator{\col}{col}
\DeclareMathOperator{\Spec}{Spec}
\newcommand{\bsp}{b_{\mathrm{sp}}}
\newcommand{\Vsymp}{V^{\mathrm{symp}}_{b}}
\newcommand{\Vsympsp}{V^{\mathrm{symp}}_{b_{\mathrm{sp}}}}
\newcommand{\Lsymp}{\mathcal{L}_{b}^{\mathrm{symp}}}
\newcommand{\Lsymprat}{\la_{b}^{\mathrm{symp,rat}}}
\newcommand{\Lsympsp}{\mathcal{L}_{b_{\mathrm{sp}}}^{\mathrm{symp}}}
\newcommand{\Lsympbar}{\overline{\la_{b}^{\mathrm{symp}}}}
\newcommand{\la}{\mathcal{L}}
\newcommand{\lac}{\mathcal{L}_{\mathrm{coord}}^{b,m,r}}
\newcommand{\laczero}{\mathcal{L}_{\mathrm{coord}}^{b,0,r}}
\newcommand{\red}{\mathrm{red}}
\newcommand{\hpr}{\widetilde{h}}
\theoremstyle{plain}
\newtheorem{theorem}[subsubsection]{Theorem}
\newtheorem{proposition}[subsubsection]{Proposition}
\newtheorem{lemma}[subsubsection]{Lemma}
\newtheorem{claim}[subsubsection]{Claim}
\theoremstyle{definition}
\newtheorem{definition}[subsubsection]{Definition}
\newtheorem{remark}[subsubsection]{Remark}
\newtheorem*{ack}{Acknowledgements}
\newtheorem*{lemma*}{Lemma}
\newcommand{\cred}{\color{black}}
\def\hsymb#1{\mbox{\strut\rlap{\smash{\Huge$#1$}}\quad}}
\def\Lsymb#1{\mbox{\strut\rlap{\smash{\Large$#1$}}\quad}}
\def\c#1#2{$\mathrm{col}_{#1\to#2}$}
\def\r#1#2{$\mathrm{row}_{#1\to#2}$}
\begin{document}

\title{On the semi-infinite Deligne--Lusztig varieties for $\GSp$}

\author{Teppei Takamatsu}
\address{Department of Mathematics (Hakubi Center), Kyoto University, Kyoto 606-8502, Japan}
\email{teppeitakamatsu.math@gmail.com}
\date{}

\begin{abstract}
{\cred
We prove that Lusztig's semi-infinite Deligne--Lusztig variety for $\mathrm{GSp}$ (and its inner form) is isomorphic, as a set with action, to an affine Deligne--Lusztig variety at infinite level, generalizing a result of Chan--Ivanov. Furthermore, we show that a component of some affine Deligne--Lusztig variety $X^0_{w_r}(b)_{\mathcal{L}}$ for $\mathrm{GSp}$ can be written, up to perfection, as a direct product of a classical Deligne--Lusztig variety with an affine space. We also study the varieties $X_h$ defined by Chan and Ivanov, and show that $X_h$ at infinite level can be realized as a subset of semi-infinite Deligne--Lusztig varieties defined using components of affine Deligne--Lusztig varieties such as $X^0_{w_r}(b)_{\mathcal{L}}$ above, even in the $\mathrm{GSp}$ case. This reinterprets previous constructions of representations from $X_h$ as instances of Lusztig's conjectural picture.}
\end{abstract}

\maketitle

\section{Introduction}
Deligne--Lusztig varieties, {\cred introduced in the celebrated paper \cite{Deligne1976}}, 
are algebraic varieties over $\overline{\F}_{q}$ whose $\ell$-adic cohomology {\cred provides a geometric construction of many representations of finite reductive groups.} 
More precisely,
{\cred given a reductive group $G$ over $\F_{q}$ and a maximal $\F_{q}$-torus $T\subset G$, the associated Deligne–Lusztig variety $X$ carries actions of $T(\F_{q})$ and $G(\F_{q})$, and from these actions one constructs a (virtual) representation of $G(\F_{q})$ from a character of $T(\F_{q})$.}
{\cred One of the main theorems of Deligne and Lusztig}
asserts that {\cred every} irreducible representation of $G(\F_{q})$ appears in some {\cred such} virtual representation.

{\cred It is natural to seek to extend this theory to}
$p$-adic fields,
and this expectation has great significance {\cred from} the perspective of the local Langlands correspondence.
To this end, 
{\cred there are two approaches to generalizing Deligne--Lusztig varieties.}
{\cred The first is the} semi-infinite Deligne--Lusztig variety in the sense of Feigin--Frenkel \cite{Feigin1990}, a subset of the semi-infinite flag manifold {\cred cut out} by the Deligne--Lusztig condition.
{\cred The second} is {\cred the} affine Deligne--Lusztig variety, {\cred defined by an analogous} condition in an affine flag variety (\cite{Rapoport2005}).
Lusztig proposed {\cred a conjectural construction of supercuspidal representations}
{\cred realizing} the local Langlands correspondence {\cred via the} homology of semi-infinite Deligne--Lusztig varieties (\cite{Lusztig1979}). 
{\cred This approach has since been developed in greater detail, especially for division algebras (\cite{Boyarchenko2012}, \cite{Chan2018}, and \cite{Chan2020}).}
See also \cite{Ivanov2022}, where the $p$-adic Deligne--Lusztig varieties are defined 
{\cred functorially by following the definition of semi-infinite Deligne--Lusztig varieties.}

{\cred Affine Deligne–Lusztig varieties have likewise been studied from the perspective of the local Langlands correspondence \cite{Ivanov2016}, \cite{Ivanov2018}, and—thanks to their relationship with the reductions of integral models of Shimura varieties—their geometry has been analyzed extensively \cites{Viehmann2008}, \cite{Viehmann2008a}, \cite{Shimada2021}, \cite{Shimada2022}, \cite{Goertz2006}, \cite{Goertz2015}, \cite{Goertz2018}, \cite{He2022}; see the surveys \cite{He2018},\cite{He2021}.}

In this paper, we study the relation between semi-infinite and affine Deligne--Lusztig varieties for $\GSp$.
In \cite{Chan2021}, Chan and Ivanov 
established such a relation for $\GL_{n}$.
More precisely, they describe semi-infinite Deligne--Lusztig varieties as an inverse limit of affine Deligne--Lusztig varieties of higher levels (namely, ``affine Deligne--Lusztig varieties {\cred at} infinite level''), and give a geometric realization of the local Langlands and Jacquet--Langlands correspondence on them.
In this paper, we will give an analogue of Chan--Ivanov's result for $\GSp$. 
Before stating precise results, we {\cred summarize} what we prove roughly:
\begin{enumerate}
\item
{\cred A semi-infinite Deligne--Lusztig variety associated with a coxeter element $w$ for $\GSp_{2n}$}
can be described {\cred as} the inverse limit of affine Deligne--Lusztig varieties of {\cred higher levels} {\cred (``at infinite level'')}. More precisely, we have
$$
(\mathop{\varprojlim}\limits_{r>m}\dot{X}^{m}_{w_r}(b))(\overline{k})
\simeq X_{w}^{(U)}(b).
$$
Here, {\cred $b$ represents} a basic $\sigma$-conjugacy class {\cred in} $\GSp_{2n} (\breve{K})$ {\cred with Kottwitz invariant $\kappa (b) = k$}, 
and $w_{r}$ (resp.\,$w$) is a certain representative of {\cred an element of the} affine Weyl group (resp.\,{\cred a coxeter element of the finite} Weyl group) of $\GSp_{2n}$ {\cred such that $w_r$ projects to $w$}.
{\cred The varieties} $X^{m}_{w_r}(b)$ and $\dot{X}^{m}_{w_r}(b)$ are {\cred the level-$I^{m}$ affine Deligne--Lusztig varieties} defined by $b$ and $w_{r}$
{\cred (see Subsection  \ref{subsection:DLset})}.
\item
{\cred 
A connected component $X^{{\cred 0}}_{w_r}(b)_{\mathcal{L}}$ of
$X^{{\cred 0}}_{w_r}(b)(\overline{k})$ is, up to perfection,
{\cred isomorphic to a product $X_{\overline{w}}^{\overline{B}}\times\mathbb{A}$ of a classical Deligne–Lusztig variety and an affine space.}
Here, $\overline{G}$ is 
\[
\overline{G} :=
\begin{cases}
\GSp_{2n} \textup{ over } \F_{q} & \textup{ if } n \textup{ is even and } k=0, \\
\GSp_{n} \textup{ over } \F_{q^{2}}  & \textup{ if } n \textup{ is even and } k=1, \\ 
\GSp_{2n} \textup{ over } \F_{q} & \textup{ if } n \textup{ is odd and } k=0,  \\
\GU_{n} \textup{ over } \F_{q} & \textup{ if } n \textup{ is odd and } k=1,
\end{cases}
\]
and $\overline{w}$ is a suitable element of the Weyl group of $\overline{G}$ (see Definitions \ref{def:findlv} and  \ref{def:findlvodd}).
Such a product description does not hold in general (cf.\ \cite{Shimada2022} and \cite{He2022}).}
\item
The scheme $X_{h}$, defined in \cite{Chan2019} (see also \cite{Chan2021a}) is also related to the semi-infinite Deligne--Lusztig variety
$X_{w}^{(U)}(b)$.
{\cred More precisely, the subset (which can be regarded as a ``component'') of $X_{w}^{(U)}$ defined by
$\mathop{\varprojlim}\limits_{r>m}\dot{X}^{m}_{w_r}(b)_{\mathcal{L}}$
(where $\dot{X}^{m}_{w_r}(b)_{\mathcal{L}}$ is a component of $X^{m}_{w_r}(b)(\overline{k})$), is isomorphic to $\mathop{\varprojlim}\limits_{h}X_h$.
In particular, $X_h$ is isomorphic to a quotient of that component.}
In \cite{Chan2021a}, they studied L-packets using varieties $X_{h}$.
On the other hand, in \cite{Lusztig1979}, Lusztig expects that we can construct supercuspidal representations by using semi-infinite Deligne--Lusztig varieties $X_{w}^{(U)} (b)$ in some sense.
{\cred Thus the constructions of representations from $X_h$ can be viewed as instances of Lusztig’s expectation in \cite{Lusztig1979}.}
\end{enumerate}
We will explain our results in detail.
Let $K$ be a non-archimedean local field, and $\breve{K}$ a completion of the maximal unramified extension of $K$.
Let $V := \breve{K}^{2n}$ be the vector space with the symplectic form $\Omega$ given by (\ref{eqn:Omega}).
We define $w_{r} \in \GSp(V)$ by
\begin{eqnarray*}
w_{r}&:=&
\left(\begin{array}{cccc|cccc}
&&&& (-1)^{n+1}\varpi^{r+k}&&&\\
\varpi^{-r}&&\hsymb{0}&&&&&\\
&\ddots&&&&\hsymb{0}&&\\
&&\varpi^{-r}&&&&&\\\hline
&&&&&\varpi^{r+k}&&\\
&&&&&&\ddots&\\
&\hsymb{0}&&&\hsymb{0}&&&\varpi^{r+k}\\
&&&\varpi^{-r}&&&&
\end{array}\right),
\end{eqnarray*}
and a representative $b \in \GSp (V)$ of a basic $\sigma$-conjugacy class with $\lambda (b) = -\varpi^{k}$ for $k=0,1$.
We also put $w := w_{0}$.
Here, $\lambda$ is the similitude character of $\GSp(V)$.
The key method in \cite{Chan2021} is parameterizing affine (resp.\,semi-infinite) Deligne--Lusztig varieties by a set $V_{b}^{\mathrm{adm}}$.
Instead of $V_{b}^{\mathrm{adm}}$, we introduce the set $\Vsymp$, defined by
\[
\Vsymp:=\{v\in V\mid \langle v,F(v)\rangle=\cdots=\langle v,F^{n-1}(v)\rangle=0,
\langle v,F^{n}(v)\rangle\neq0\}.
\]
The parameterization is given by the following theorem.

\begin{theorem}[See Theorem \ref{theorem:comparison} for the precise statements]
\label{theorem:introcomparison}
If $r+k\geq m+1$, there are the following two $J_b(K)$-equivariant commutative diagrams with bijective horizontal arrows.
\[
\xymatrix{
\{v\in \Vsymp\mid \alpha:=\langle v,F^{n}(v)\rangle \in K^{\times}\} \ar[r]^-{g_{b,0}}_-{\sim} \ar[d] & X_{w}^{(U)}(b) \ar[d]\\
\Vsymp/\breve{K}^{\times} \ar[r]^{g_{b,0}}_{\sim} &  X_{w}^{(B)}(b)
}
\]
\[
\xymatrix{
\{v\in \Vsymp\mid\frac{\sigma(\alpha)}{\alpha}\equiv1 \mod \p^{m+1}\}/\dot{\sim}_{b,m,r} \ar[r]^-{g_{b,r}}_-{\sim} \ar[d] & \dot{X}_{w_{r}}^{m}(b)(\overline{k}) \ar[d] \\
\Vsymp/\sim_{b,m,r}  \ar[r]^-{g_{b,r}}_-{\sim} & X_{w_{r}}^{m}(b)(\overline{k})
}
\]
Here, $\sim_{b,m,r}$ and $\dot{\sim}_{b,m,r}$ are some equivalence relations.
\end{theorem}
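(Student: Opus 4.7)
The plan is to adapt the strategy of Chan--Ivanov \cite{Chan2021} for $\GL_{n}$ to the symplectic setting, exploiting the isotropy conditions $\langle v, F^{j}(v) \rangle = 0$ for $0 \leq j \leq n-1$ in the definition of $\Vsymp$. For any $v \in \Vsymp$, I would associate an explicit matrix
\[
g(v) := \bigl(\, v \mid F(v) \mid \cdots \mid F^{n-1}(v) \mid c_{0} F^{n}(v) \mid c_{1} F^{n+1}(v) \mid \cdots \mid c_{n-1} F^{2n-1}(v)\,\bigr),
\]
where the scalars $c_{j} \in \breve{K}^{\times}$ are determined so that $g(v)$ sits in $\GSp(V)$ with similitude factor matching $\lambda(b) = -\varpi^{k}$. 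The hypothesis that $v, F(v), \ldots, F^{n-1}(v)$ are mutually orthogonal makes these columns span a Lagrangian subspace, while the remaining scalars are forced by the pairings $\langle F^{i}(v), F^{j}(v)\rangle$ with $i<n\leq j$, each of which is a $\sigma$-iterate of $\alpha = \langle v, F^{n}(v)\rangle$. This single formula underlies both $g_{b,0}$ and $g_{b,r}$; only the ambient flag variety differs.

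To confirm that $g(v)$ lands in the desired Deligne--Lusztig locus, I would compute $g(v)^{-1} b F(g(v))$ directly. Since $F$ shifts the columns by one and $b$ implements the cyclic wrap-around matched by the shape of $w$ (respectively $w_{r}$), the Iwahori-coset computation reduces to bookkeeping of the scalars $c_{j}$, the valuation jumps prescribed by the blocks $\varpi^{-r}$ and $\varpi^{r+k}$ of $w_{r}$, and the unit $\sigma(\alpha)/\alpha$. This is exactly where the hypothesis $r+k \geq m+1$ enters: it guarantees that the resulting perturbation $g(v)^{-1} b F(g(v))\, w_{r}^{-1}$ actually lies in $I^{m}$, and it simultaneously explains why the congruence $\sigma(\alpha)/\alpha \equiv 1 \pmod{\p^{m+1}}$ must be imposed to pick out the refined $\dot{X}_{w_{r}}^{m}(b)$-locus.

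For bijectivity I would construct an inverse. Given a point $[g] \in X_{w_{r}}^{m}(b)(\overline{k})$, extract the first column $v$ of any lift; reading the Deligne--Lusztig relation $g^{-1} b F(g) \in I^{m} w_{r} I^{m}$ column-by-column identifies $F^{j}(v)$ (for $j<n$) with the appropriate later columns of $g$ and, combined with the symplectic relation $g^{T} J g = \lambda(g) J$, forces $\langle v, F^{j}(v)\rangle = 0$ together with $\alpha \in \breve{K}^{\times}$. The residual ambiguity in extracting $v$ from $[g]$ is precisely captured by the equivalence relations $\sim_{b,m,r}$ and $\dot{\sim}_{b,m,r}$, which one defines so as to identify two vectors whose associated matrices agree modulo $I^{m}$ (respectively modulo the torus part of $I^{m}$). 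The $J_{b}(K)$-equivariance is manifest since $J_{b}(K)$ acts by left multiplication on both $v$ and $g(v)$, and the commutativity of each diagram follows by observing that the passage from the ``$U$-level'' to the ``$B$-level'' row (resp.\ from $\dot{X}_{w_{r}}^{m}$ to $X_{w_{r}}^{m}$) corresponds precisely to quotienting out the $\Gm$-action $v \mapsto tv$.

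The main obstacle will be the surjectivity direction together with the verification that the extracted $v$ automatically satisfies the entire chain of isotropy conditions. This demands a careful expansion of $g^{-1} b F(g)$ inside $I^{m} w_{r} I^{m}$ and an induction on $j$ that propagates the vanishing of $\langle v, F^{j}(v)\rangle$ from $j=1$ up to $j=n-1$; the hypothesis $r+k \geq m+1$ is the quantitative input that prevents the $I^{m}$-ambiguities from destroying the isotropy at intermediate levels. Once the affine (second) diagram is established, the semi-infinite (first) diagram should follow by passing to the appropriate inverse limit as $r,m \to \infty$ and keeping track of the $U$- and $B$-level equivalences.
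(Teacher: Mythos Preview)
Your construction of $g(v)$ is where the proposal breaks down. For $n\ge 2$, the matrix with columns $v,F(v),\ldots,F^{n-1}(v),c_0F^{n}(v),\ldots,c_{n-1}F^{2n-1}(v)$ is \emph{never} in $\GSp(V)$, no matter how the scalars $c_j$ are chosen. The symplectic form $\Omega$ forces $\langle g_i,g_j\rangle=0$ whenever $i+j\neq 2n+1$; but with your columns, $\langle g_1,g_{n+1}\rangle=c_0\langle v,F^{n}(v)\rangle=c_0\alpha\neq 0$, while $1+(n+1)\neq 2n+1$. Dually, $\langle g_n,g_{n+1}\rangle=c_0\langle F^{n-1}(v),F^{n}(v)\rangle=0$, whereas $\Omega$ requires this pairing to be a unit multiple of the similitude factor. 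So the first $n$ iterates of $F$ give a Lagrangian, as you say, but the next $n$ iterates give the \emph{wrong} complementary Lagrangian: they pair with the first block in the ``$\GL_n$'' pattern (column $i$ against column $n+i$), not the ``$\GSp_{2n}$'' pattern (column $i$ against column $2n+1-i$). The paper's $g_{b,r}(v)$ instead uses for columns $n+1,\ldots,2n-1$ vectors $G_1(v),\ldots,G_{n-1}(v)$ built from $V_k=\varpi^{k}\sigma^{-1}b^{-1}$ (morally $F^{-1}$) by a Gram--Schmidt procedure against $v$ and $F^{n}(v)$; only column $2n$ is $F^{n}(v)$. This is what makes the Gram matrix come out as $\lambda\Omega$ and what produces the precise shape of $A_{b,r}$ in Lemma~\ref{lemma:order}.

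The surjectivity argument also needs substantially more than ``extract the first column and read off the relation.'' Given $g$ with $g^{-1}F(g)\in w_rI^m$, one has to right-multiply $g$ by a carefully chosen $P\in I^m$ so that $gP$ literally equals $g_{b,r}(v)$ for some $v$; the paper does this by a long sequence of paired elementary transformations $\mathrm{col}_{i\to j}$/$\mathrm{row}_{j\to i}$ in $\GSp$ (each column move forces a companion move on column $2n+1-j\to 2n+1-i$), checking at each step that already-cleared entries are not disturbed. In the affine case (Section~\ref{section:pfcomparison}, Step~1) the lower-left block cannot be killed in finitely many steps: one only raises the valuation by at least one per pass, and $P$ is obtained as a convergent infinite product in $I^m$. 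This convergence argument has no analogue in your sketch, and it is where the hypothesis $r+k\ge m+1$ (in the form $2r+k\ge 1$) is repeatedly used. Finally, the logical order is the reverse of what you propose: the semi-infinite diagram is proved first and more easily (finitely many transformations suffice), and the affine one is the harder refinement.
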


For the definition of objects on the right-hand sides, see Subsection \ref{subsection:DLset}.
By studying the relation $\sim_{b,m,r}$ in detail, we have {\cred the} desired comparison theorem
$$
(\mathop{\varprojlim}\limits_{r>m}\dot{X}^{m}_{w_r}(b))(\overline{k})\simeq X_{w}^{(U)}(b).
$$
The key part of Theorem \ref{theorem:introcomparison} is the surjectivity of {\cred the} horizontal arrows. Its proof is given by direct computation using elementary transformations in $\GSp$, as in the proof of \cite[Theorem 6.5]{Chan2021}.
However, since we work {\cred with} $\GSp$, elementary transformations are more complicated, and the proof needs to be carried out with greater care.
To get the surjectivity for the lower diagram, we need an infinite number of elementary transformations (whose infinite product converges). This is in contrast to the proof of \cite[Theorem 6.5]{Chan2021}.

Next, we will consider the structure of {\cred the} connected components of affine Deligne--Lusztig varieties.
In the following, we fix a Coxeter-type representative $b$ of a basic $\sigma$-conjugacy class (see Subsection \ref{subsection:repb}).
{\cred By using Viehmann's argument in \cite{Viehmann2008a},}
we obtain the description of a connected component of an affine Deligne--Lusztig variety in terms of $\Vsymp$.
More precisely, we can define a subset $\Lsymp \subset \Vsymp$, and the corresponding subset $X_{w_{r}}^{m}(b)_{\la}$ gives a component of $X_{w_{r}}^{m} (b)$.
Moreover, one can show that this component is contained in an affine Schubert cell (see Proposition \ref{prop:cell containment}).
Since the structure of {\cred an} affine Schubert cell is well known (see \cite[Lemma 4.7]{Chan2021} for example), now we can study the structure of this component in an explicit way.
The main structure theorem is the following.

\begin{theorem}[see Theorem \ref{theorem:restradlv}]
\label{theorem:introstradlv}
Suppose that $r+k \geq 1$.
Then we have a decomposition of $\overline{\F}_{q}$-schemes
\[
X^{0}_{w_{r}} (b)_{\la}^{\mathrm{perf}} \simeq X_{\overline{w}}^{\overline{B}, \mathrm{perf}} \times \mathbb{A}^{\mathrm{perf}}.
\]
Here, $X_{\overline{w}}^{\overline{B}}$ is a classical Deligne--Lusztig {\cred variety} associated with some reductive group $\overline{G}$ which depends on $n$ and $k$ (see Definition \ref{definition:reductive}),
$\mathbb{A}$ is an affine space over $\overline{\F}_{q}$, and $\mathrm{perf}$ {\cred denotes} the perfection.
\end{theorem}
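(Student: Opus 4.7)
The plan is to combine Theorem \ref{theorem:introcomparison} (in the case $m=0$) with the affine Schubert cell containment from Proposition \ref{prop:cell containment} to put explicit coordinates on the component $X^{0}_{w_{r}}(b)_{\la}$, and then to read off the symplectic Deligne--Lusztig conditions in those coordinates. By Theorem \ref{theorem:introcomparison}, the $\overline{k}$-points of the component are identified with the image of $\Lsymp$ in $\Vsymp/\sim_{b,0,r}$; by the Schubert cell containment, each equivalence class admits a distinguished representative whose coordinates form an affine space of explicit dimension, as in the $\GL_{n}$-analogue \cite[Lemma 4.7]{Chan2021}.

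In these coordinates the conditions cutting out $\Lsymp$ split into two groups. The ``degree zero'' (i.e.\ reduction modulo $\varpi$) parts of the Deligne--Lusztig equations $\langle v, F^{i}(v)\rangle = 0$ for $1\leq i\leq n-1$ together with the normalization of $\langle v, F^{n}(v)\rangle$ become precisely the defining equations of the classical Deligne--Lusztig variety $X_{\overline{w}}^{\overline{B}}$ for the reductive group $\overline{G}$ of Definition \ref{definition:reductive}. Because $b$ is Coxeter-type, the action of $F$ is a cyclic shift twisted by a scalar, so the higher-order parts of the same equations form a triangular system in the remaining coordinates: at each level of $\varpi$-adic depth, one coordinate is uniquely determined by the lower-depth coordinates and by a single free parameter. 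A triangular change of variables then converts the higher-depth unknowns into independent coordinates, giving the affine factor $\mathbb{A}$, and over $\overline{\F}_{q}$ this decomposition is compatible with the parameterization, yielding the product structure. Taking perfections is then formal.

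The main obstacle will be correctly distributing the constraints between the two groups of coordinates, since the alternating form $\Omega$ and the similitude factor $\lambda(b) = -\varpi^{k}$ make the behavior of $\langle v, F^{i}(v)\rangle$ depend on both the parity of $i$ and the value of $k \in \{0,1\}$. This must be done carefully enough that the resulting classical equations match the Deligne--Lusztig equations for $(\overline{G}, \overline{w})$ on the nose, and that the dimension of the affine factor matches the count predicted by the affine Schubert cell. Working in the symplectic basis adapted to the Coxeter representative $b$ and to the decomposition $V = V_{+} \oplus V_{-}$ should make the separation transparent, at which point the two cases $k=0$ and $k=1$ can be handled uniformly, with the sign $(-1)^{n+1}$ appearing in $w_{r}$ accounting for the similitude twist in the triangular solvability.
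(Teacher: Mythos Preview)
Your high-level strategy matches the paper's: parameterize the component via the Schubert cell, identify the reduction-modulo-$\varpi$ layer with the classical Deligne--Lusztig variety $X_{\overline{w}}^{\overline{B}}$, and show the remaining coordinates are free. But the sentence ``the higher-order parts of the same equations form a triangular system in the remaining coordinates: at each level of $\varpi$-adic depth, one coordinate is uniquely determined by the lower-depth coordinates and by a single free parameter'' is precisely where the real work lies, and as written it is a gap rather than an argument. Already at depth zero (the first layer beyond the classical one), the equations are \emph{not} triangular in the natural Schubert-cell coordinates: for $\GSp_{4}$ one faces
\[
xz^{q}-yw^{q}+zx^{q}-wy^{q}=0
\]
over the open locus $xw^{q^{2}}-wx^{q^{2}}\neq 0$, and this is not solvable for $y$ or $z$ directly. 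The paper resolves this by passing to the perfection \emph{before} changing variables, setting $z':=x^{1/q}z-w^{1/q}y$, and only then does the equation become linear in $y$. In general (Lemma \ref{lemma:affinefibration}) one must introduce new coordinates $\eta_{i}:=\langle v'',E^{-(2i-1)}(v')\rangle$, form a transition matrix $Q\in\GL_{n}(\mathcal{O}_{\Lsympbar}^{\mathrm{perf}})$ expressing positive powers of $E$ on $v'$ in terms of negative ones, and then perform a lower-triangular row reduction on $Q$ to reach a system that is genuinely triangular. None of this follows from ``$F$ is a cyclic shift twisted by a scalar''; the introduction explicitly warns that the $\GSp_{4}$ trick does not generalize without this extra structure.

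Two smaller points. First, the paper carries out the computation for the \emph{special} representative $b_{\mathrm{sp}}$ and transports to the Coxeter representative via Proposition \ref{prop:component}(3); your proposal works directly with the Coxeter $b$, which makes the equations harder to separate. Second, your argument produces only a bijection on $\overline{k}$-points. To upgrade it to an isomorphism of (perfect) schemes the paper introduces a finite-length coordinate model $\lac$ and a modified map $h_{b,r}$ (Definition \ref{definition:Lbmr} and Proposition \ref{prop:strbylac}), checking compatibility with the Schubert-cell scheme structure of \cite[Lemma 4.7]{Chan2021}; your ``distinguished representative'' step would need an analogous construction.
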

We sketch the proof in the following.
Since we know the defining equations of $X^{0}_{w_{r}} (b)_{\la}$ in the affine Schubert cell, 
it is imaginable that the proof can be done through explicit calculations.
More precisely, to obtain a decomposition as in the theorem, it suffices to solve these equations over the {\cred coordinate} ring of a classical Deligne--Lusztig variety $X_{\overline{w}}^{\overline{B}}$.
In the simplest case, the equations and solutions are constructed as follows:
Consider the open subscheme $\overline{\mathcal{L}} \subset \mathbb{A}_{x,w}$ defined by \begin{equation}
\label{eqn:toycase}
xw^{q^2}  -wx^{q^2} \neq 0.
\end{equation}
We define the reduced locally closed subscheme $\mathcal{L}_{1}$ of $\mathbb{A}_{x,y,z,w}$ defined by (\ref{eqn:toycase}) and the equation
\begin{equation}
\label{eqn:toycase2}
xz^q-yw^q+zx^q-wy^q =0.
\end{equation}
Actually, 
{\cred in the case where $G= \GSp_{4}$ and $k=1$, a classical Deligne--Lusztig variety $X_{\overline{w}}^{\overline{B}}$ is the projectivization of $\overline{\mathcal{L}}$,}
and 
{\cred a component $X^{0}_{w_{r}} (b)_{\la}$ of an affine Deligne--Lusztig variety is the projectivization of $\mathcal{L}_{1}$ (see Subsections \ref{subsection:structureofLsymp_even} and \ref{subsection:structureofADLV}).}
Therefore, in this case, an essential part of the proof is to solve the equation (\ref{eqn:toycase2}) over $\overline{\mathcal{L}}$ up to perfection so that we have 
\[
\mathcal{L}_{1}^{\mathrm{perf}} \simeq  \overline{\mathcal{L}}^{\mathrm{perf}} \times \mathbb{A}^{1,\mathrm{perf}}.
\]
We put 
\begin{align*}
y' &:= y, \\
z' &:= x^{\frac{1}{q}} z -  w^{\frac{1}{q}}y.
\end{align*}
Then the left-hand side of (\ref{eqn:toycase2}) can be written as 
\begin{equation}
\label{eqn:toycase3}
z'^{q} + x^{q-\frac{1}{q}} z' + (x^{q-\frac{1}{q}} w^{\frac{1}{q}} - w^{q}) y'.
\end{equation}
Since we have 
\[
((x^{q-\frac{1}{q}} w^{\frac{1}{q}} - w^{q}) x^{\frac{1}{q}})^{q} 
= x^{q^{2}} w - w^{q^{2}}x,
\]
the equation (\ref{eqn:toycase3}) can be solved over $\overline{\la}^{\mathrm{perf}}$ with respect to $y'$.
This kind of direct computation only works for $\GSp_{4}$, and the general case is more subtle. See the proof of Lemma \ref{lemma:affinefibration}.

The third application is the description of $X_{h}$ appearing in \cite{Chan2021a}.
We have the following theorem, which is an analogue of \cite[Equation (7.1)]{Chan2021}.
\begin{theorem}[see Remark \ref{remark:Xhinf}]
\label{theorem:introXhinf}
The variety $X_{h}$ and components $\dot{X}^{m}_{w_r}(b)_{\la}$ of affine Deligne--Lusztig varieties {\cred coincide} at infinite level, i.e., we have
\[
\mathop{\varprojlim}\limits_{h} X_{h} \simeq \la_{b}^{\mathrm{symp,rat}} \simeq \mathop{\varprojlim}\limits_{r>m}\dot{X}^{m}_{w_r}(b)_{\la} \subset \mathop{\varprojlim}\limits_{r>m}\dot{X}^{m}_{w_r}(b) \simeq X_{w}^{(U)}(b).
\]
\end{theorem}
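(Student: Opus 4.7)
The plan is to assemble the claimed chain from pieces that have already been established or are nearly definitional, plus one identification that requires genuine work.

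First, the rightmost isomorphism $\varprojlim_{r>m}\dot{X}^{m}_{w_{r}}(b)\simeq X_{w}^{(U)}(b)$ is precisely the inverse-limit version of Theorem \ref{theorem:introcomparison}: at each finite level one has the bijection $g_{b,r}$, and the analysis of how the equivalence relation $\dot{\sim}_{b,m,r}$ trivializes as $r\to\infty$ (already needed to establish Theorem \ref{theorem:introcomparison} itself) promotes the finite-level comparison to the full-level one. This is exactly the place where, as remarked after Theorem \ref{theorem:introcomparison}, detailed control of the relation $\sim_{b,m,r}$ buys us the comparison.

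Next, I would obtain the middle isomorphism $\la_{b}^{\mathrm{symp,rat}}\simeq\varprojlim_{r>m}\dot{X}^{m}_{w_{r}}(b)_{\la}$ by restricting the parameterization of Theorem \ref{theorem:introcomparison}. By construction, the component $\dot{X}^{m}_{w_{r}}(b)_{\la}$ is the image under $g_{b,r}$ of the subset $\Lsymp\subset\Vsymp$, intersected with the rationality condition $\sigma(\alpha)/\alpha\equiv 1\pmod{\p^{m+1}}$ appearing in the upper-left entry of the second diagram of Theorem \ref{theorem:introcomparison}. This restriction is compatible with the transition maps in $r$ (because $\Lsymp$ is defined independently of $r$), so the bijection descends to the inverse limit and yields the stated isomorphism with $\la_{b}^{\mathrm{symp,rat}}$. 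The subsequent containment into $\varprojlim_{r>m}\dot{X}^{m}_{w_{r}}(b)$ is then tautological from $\Lsymp\subset\Vsymp$.

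The leftmost identification $\varprojlim_{h}X_{h}\simeq\la_{b}^{\mathrm{symp,rat}}$ is where the real work lies, and is the analogue for $\GSp$ of \cite[Equation (7.1)]{Chan2021}. The Chan--Ivanov scheme $X_{h}$ from \cite{Chan2019,Chan2021a} is defined in terms of a jet of a frame satisfying certain Howe-factorization-type conditions, which at first glance looks rather different from a collection of symplectic vectors. The plan is to exhibit, at each finite level $h$, an explicit map from $X_{h}$ to $\Vsymp/\sim_{b,m,r}$ (with $m,r$ chosen as functions of $h$) by reading off the first column of the jet, and then to verify that the defining constraints of $X_{h}$ translate exactly into the symplectic orthogonality relations $\langle v,F^{i}(v)\rangle=0$ for $0<i<n$ together with the rationality of $\alpha=\langle v,F^{n}(v)\rangle$. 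Taking the limit in $h$ and comparing with the second diagram of Theorem \ref{theorem:introcomparison} then yields the desired isomorphism.

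The main obstacle is this last identification: although it is purely a matter of unwinding two concrete parameterizations, one has to match not only the underlying sets level by level but also the corresponding equivalence relations, and to align the conductor index $h$ on one side with the pair $(m,r)$ on the other. Once this dictionary is fixed, the inverse limits on both sides coincide essentially by construction, and the chain of Theorem \ref{theorem:introXhinf} follows.
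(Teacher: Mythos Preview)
Your outline is correct in spirit and would lead to the result, but the paper organizes the hard step differently, and the reorganization is worth noting.

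For the leftmost identification, you propose to treat $X_h$ as the Chan--Oi group-theoretic variety and to build, at each finite $h$, a map $X_h \to \Vsymp/\dot{\sim}_{b,m,r}$ for suitable $(m,r)$ depending on $h$, then pass to the limit. The paper instead inserts an intermediate object: it \emph{defines} its own $X_h$ linear-algebraically, as the locus in a truncation $\la/\la^{(h)}$ cut out by the congruences $\langle v,F^i(v)\rangle\equiv 0$ for $1\le i\le n-1$ together with $\langle v,F^n(v)\rangle\in\varpi^{n'}(\co_K/\p^{h+n'})^\times$. With this definition the identification $\varprojlim_h X_h \simeq \Lsymprat$ is essentially tautological (one only has to check that the tower has surjective transition maps, which is Proposition \ref{surjectivity}). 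All the genuine matching is then concentrated in Proposition \ref{proposition:Xhcomparison}, which shows that the paper's $X_h$ is isomorphic to the Chan--Oi variety $\{g\in\mathbb{G}_h : g^{-1}F_b(g)\in\mathbb{U}_h^-\cap F_b(\mathbb{U}_h)\}$ at each fixed $h$, via the map $v\mapsto h_b(v)$ of Remark \ref{remark:hb}. Your ``read off the first column'' is exactly the inverse of this map and appears in the surjectivity argument there.

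The advantage of the paper's route is that it never needs to align the conductor index $h$ with a pair $(m,r)$ at finite level: the two towers $(X_h)_h$ and $(\dot{X}^m_{w_r}(b)_{\la})_{r,m}$ are both compared to the single set $\Lsymprat$ rather than to each other. Your direct finite-level dictionary can be made to work, but the truncations on the two sides have different shapes, so the maps you build need not be bijections at any finite stage, and you would have to argue cofinality carefully. The paper's factorization through $\Lsymprat$ sidesteps this entirely.
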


This theorem ensures that $X_h$ at infinite level can be regarded as a connected component of a semi-infinite Deligne--Lusztig variety.
Therefore, it reinterprets the studies of the local Langlands correspondence using $X_{h}$ as a realization of Lusztig's expectation of \cite{Lusztig1979}.
The proof is given by the direct computation based on the parameterization given by Theorem \ref{theorem:introcomparison} and Theorem \ref{theorem:restradlv}. See Subsection \ref{subsection:familyXh} for more {\cred details}.

This paper is organized as follows.
In Section \ref{section:comparison}, 
{\cred we set up the notation to state the main comparison theorem} Theorem \ref{theorem:introcomparison}. 
In Section \ref{section:pfcomparison}, we prove Theorem \ref{theorem:introcomparison} by computing elementary transformations.
In Section \ref{section:conncomp}, we describe a connected component of affine Deligne--Lusztig varieties using $\Lsymp$, which is motivated by the main comparison theorem and \cite{Viehmann2008a}'s result.
Moreover, we prove Theorem \ref{theorem:introstradlv} and Theorem \ref{theorem:introXhinf} by studying the set $\Lsymp$ in detail.

\begin{ack}
The author is deeply grateful to Naoki Imai for his deep encouragement and helpful suggestions. 
The author also thanks Shou Yoshikawa for discussing the solutions of the equation (\ref{eqn:toycase2}).
Moreover, the author wishes to express his gratitude to Charlotte Chan, Alexander B. Ivanov, Masao Oi, Yasuhiro Oki, and Ryosuke Shimada for helpful comments.
Finally, the author would like to express his sincere gratitude to the referee for carefully reviewing the manuscript and providing valuable comments.
The author was supported by JSPS KAKENHI Grant numbers JP19J22795 and JP22J00962.
\end{ack}

\subsection*{Data availability}
Data sharing is not applicable to this article as no data sets were generated or analyzed during the current study.

\subsection*{Conflict of interest}
The author has no {\cred c}onflict of interest directly relevant to the content of this article.

\section{Statement of the comparison theorem}
\label{section:comparison}

\subsection{Basic Notation}
Let $K$ be a non-archimedean local field with residue characteristic $p >0$, and 
$\breve{K}$ the completion of the maximal unramified extension of $K$.
Let $\co_{K}$ (resp.\,$\co$) be the ring of integers of $K$ (resp.\,$\breve{K}$), $\p_{K}$ (resp.\,$\p$) its maximal ideal, and $k=\F_{q}$ (resp.\,${\cred \overline{k}}$) its residue field.
We fix a uniformizer $\varpi$ of $K$. 
{\cred Let $\ord$ denote the normalized valuation on $\breve{K}$,}
and {\cred let} $\sigma\in \mathrm{Aut}(\breve{K}/K)$ {\cred be} the Frobenius morphism.

Let $V_K:= K^{2n}$, \,\,\,$V:= \breve{K}^{2n}$, and we denote the symplectic form on $V_K$ associated with 
\begin{equation}
\label{eqn:Omega}
\Omega:=
\left(\begin{array}{ccc|ccc}
&&&&&1\\
&\hsymb{0}&&&\iddots&\\
&&&1&&\\\hline
&&1&&&\\
&\iddots&&&&\\
1&&&&\hsymb{0}&
\end{array}\right)
\cdot \diag(1,-1,1,-1, \ldots, 1,-1)
. 
\end{equation}
 by $\langle\, , \rangle$. 
 {\cred In this paper, horizontal/vertical lines in block matrices indicate the central divider unless otherwise noted.}

Let $G$ be $\GSp_{2n}$ over $K$ associated with the above symplectic form, i.e.,
$$G := \{g\in \GL_{2n}\mid\exists\lambda(g)\in \mathbb{G}_m, \forall x,y\in V_K, \langle gx,gy\rangle=\lambda(g)\langle x,y\rangle \}.$$
Then we have the exact sequence
\[
\begin{CD}
1 @>>>\Sp_{2n} @>>>\GSp_{2n} @>\lambda>> \mathbb{G}_m @>>>1
\end{CD}
\]
and $\Sp_{2n}$ is a simply connected algebraic group. By \cite[2.a.2]{Pappas2008},
the Kottwitz map {\cred $\kappa_G\colon G(\breve{K}) \rightarrow \pi_1(G)_{\Gamma} \simeq \Z$} 
is {\cred given by} $\ord\circ \lambda$.

We take $[b]$, a basic $\sigma$-conjugacy class of $G(\breve{K})$, and set $k:=\kappa_G([b])\in\mathbb{Z}$.
In the following, we suppose that $k \in \{0,1 \}$ (note that we may assume this condition by multiplying $[b]$ by a scalar {\cred since $\ord \circ \lambda (\varpi^i b) =\ord (\varpi^{2i} b) = 2i + \ord (b)$}).

We can choose a representative $b \in G(\breve{K})$ of $[b]$ satisfying
{\cred
\begin{equation}
\label{eq:lambda}
\lambda(b)= - \varpi^{k} \quad \textup{and} \quad b \in G(K).
\end{equation}
}
We assume (\ref{eq:lambda}) in the {\cred remainder} of this paper. 
For the precise choice of $b$, see Subsection \ref{subsection:repb}.
We write $F \colon V\rightarrow V$ for the structure morphism of {\cred the} isocrystal corresponding to $b$, i.e., $F=b\sigma$.
We also frequently apply $F$ to matrices $M \in M_{2n} (\breve{K})$ {\cred by setting} $F(M) = b\sigma (M).$
Similarly, we also use the notation $F^{-1} = \sigma^{-1} b^{-1}$ and apply it to elements in $V$ or $M_{2n} (\breve{K}).$
 We write $J_b$ for the associated inner form of $G$ over $K$, i.e., $J_b$ is the reductive group whose $R$-valued {\cred points are} given by
$$J_b(R):= \{g\in G(R\otimes_{K}\breve{K})\mid g^{-1}b\sigma(g)=b\}$$
for a finite type algebra $R$ over $K$.
{\cred In the case when $k=1$ (resp.\ $k=0$), $J_b$ is the (unique) non-trivial inner form (resp.\ the trivial inner form) of $\GSp_{2n}$.}

\subsection{Definition of semi-infinite Deligne--Lusztig varieties and {\cred a}ffine Deligne--Lusztig varieties}
\label{subsection:DLset}

For $r\in \mathbb{Z}_{\geq0}$, define
\begin{eqnarray*}
w_{r}:=
\left(\begin{array}{cccc|cccc}
&&&& (-1)^{n+1}\varpi^{r+k}&&&\\
\varpi^{-r}&&\hsymb{0}&&&&&\\
&\ddots&&&&\hsymb{0}&&\\
&&\varpi^{-r}&&&&&\\\hline
&&&&&\varpi^{r+k}&&\\
&&&&&&\ddots&\\
&\hsymb{0}&&&\hsymb{0}&&&\varpi^{r+k}\\
&&&\varpi^{-r}&&&&
\end{array}\right).
\end{eqnarray*}
{\cred Note} that $w_r$ represents the Coxeter element of the Weyl group of $G$. For simplicity, we write $w$ for $w_{0}.$

First, we recall the definition of a semi-infinite Deligne--Lusztig variety which is a direct analogue of a classical Deligne--Lusztig variety in \cite{Deligne1976}.
Let $B$ be {\cred the} intersection of {\cred the} standard Borel subgroup (upper triangular matrices) of $\GL_{2n}$ with $\GSp_{2n}$, and $U$ its unipotent radical.
We define semi-infinite Deligne--Lusztig varieties for $\GSp$ by
\begin{eqnarray*}
X^{(U)}_{w}(b)&:=&\{g\in G(\breve{K})/U(\breve{K})\mid g^{-1}b\sigma(g)\in U(\breve{K})wU(\breve{K})\},\\
X^{(B)}_{w}(b)&:=&\{g\in G(\breve{K})/B(\breve{K})\mid g^{-1}b\sigma(g)\in B(\breve{K})wB(\breve{K})\}.
\end{eqnarray*}
A priori, these sets do not  have scheme structures.

Next, {\cred w}e define affine Deligne--Lusztig varieties of higher Iwahori level as in \cite[Subsection 3.4]{Chan2021}.
For $m \in \Z_{\geq 0}$, we define subgroups $I^{m}_{\GL}, \dot{I}^{m}_{\GL} \subset \GL(\breve{K})$ by
\[
I^{m}_{\GL}
:=
\left(\begin{array}{cc|cc}
\co^{\times}&&&\hsymb{\p}^{m+1}\\
&\ddots&&\\\hline
&&\ddots&\\
\hsymb{\p}^{m}&&&\co^{\times}
\end{array}\right)
,
\dot{I}^{m}_{\GL}
:=
\left(\begin{array}{cc|cc}
1 + \p^{m+1} &&&\hsymb{\p}^{m+1}\\
&\ddots&&\\\hline
&&\ddots&\\
\hsymb{\p}^{m}&&& 1+ \p^{m+1}
\end{array}\right).
\]

We put $I^{m} := I^{m}_{\GL} \cap G (\breve{K})$ and $\dot{I}^{m} := \dot{I}^{m}_{\GL} \cap G (\breve{K})$.
Define affine Deligne--Lusztig varieties as
\begin{eqnarray*}
X^{m}_{w_{r}}(b)(\overline{k})&:=&\{gI^{m} \in G(\breve{K})/I^m \mid g^{-1}b\sigma(g)\in I^{m} w_{r}I^{m} \},\\
\dot{X}^{m}_{w_r}(b)(\overline{k})&:=&\{g\dot{I}^{m} \in G(\breve{K})/\dot{I}^{m} \mid g^{-1}b\sigma(g)\in \dot{I}^{m} w_{r}\dot{I}^{m} \}.
\end{eqnarray*}

{\cred Note} that the above $w_{r}, I^m, \dot{I}^m$ {\cred satisfy} the condition of 
\cite[Theorem 4.9]{Chan2021}
if $r \geq m$.
Therefore, by \cite[Corollary 4.10]{Chan2021}, if $\chara K>0$, (resp. $ \chara K=0$),  the above $X^{m}_{w_{r}}$ and $\dot{X}^{m}_{w_{r}}$ can be regarded as schemes which are locally of finite type (resp.\,locally perfectly of finite type) over $\overline{k}$, which are locally closed subschemes of affine flag varieties $L(G)/L^{+}(I^m)$ and  $L(G)/L^{+}(\dot{I}^m)$.

\subsection{Comparison theorem}
Let
$$\Vsymp:=\{v\in V\mid \langle v,F(v)\rangle=\cdots=\langle v,F^{n-1}(v)\rangle=0,
\langle v,F^{n}(v)\rangle\neq0\}. $$

\begin{lemma}
$v\in \Vsymp$, then $v,F(v),\ldots.F^{2n-1}(v)$ 
{\cred forms a} basis of $V$ over $\breve{K}$.
\end{lemma}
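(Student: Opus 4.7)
The plan is to deduce linear independence of $v, F(v), \ldots, F^{2n-1}(v)$ from the non-degeneracy of their Gram matrix with respect to the symplectic form. If $M := (\langle F^i(v), F^j(v)\rangle)_{0 \leq i, j \leq 2n-1}$ is invertible over $\breve{K}$, then these $2n$ vectors cannot span a proper subspace $W \subsetneq V$: otherwise $M$ would factor through the pairing $W \times W \to \breve{K}$ and have rank at most $\dim_{\breve K} W < 2n$. Since $\dim_{\breve K} V = 2n$, invertibility of $M$ therefore yields a basis.

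To compute $M$, the first step is to exploit that $b \in \GSp(V)$ with $\lambda(b) = -\varpi^{k} \in K^{\times}$ is $\sigma$-invariant and $F = b\sigma$, which gives the twisted isometry $\langle F(x), F(y)\rangle = \lambda(b)\sigma(\langle x, y\rangle)$ and, iterating,
\[
\langle F^i(v), F^j(v)\rangle = \lambda(b)^{i}\, \sigma^{i}\!\left(\langle v, F^{j-i}(v)\rangle\right) \qquad (i \leq j).
\]
Then I would feed in the defining conditions of $\Vsymp$: the vanishings $\langle v, F^{s}(v)\rangle = 0$ for $1 \leq s \leq n-1$, together with $\langle v, v\rangle = 0$ and antisymmetry, force $M_{ij} = 0$ whenever $|i - j| \leq n-1$, while $\alpha := \langle v, F^n(v)\rangle \neq 0$ yields $M_{i,\, i+n} = \lambda(b)^{i}\sigma^{i}(\alpha) \neq 0$ for $0 \leq i \leq n-1$.

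With this information in hand I would decompose $M$ into four $n \times n$ blocks. The band condition kills the two diagonal blocks, so $M$ is block antidiagonal. The upper-right block $A$, defined by $A_{ij} := M_{i,\, n+j}$ for $0 \leq i, j \leq n-1$, satisfies $A_{ij} = 0$ for $i > j$ (again by the band condition) and has nonzero diagonal $A_{ii} = \lambda(b)^{i}\sigma^{i}(\alpha)$; thus $A$ is upper triangular with invertible diagonal, and by antisymmetry the lower-left block equals $-A^{\mathrm{T}}$. From the block antidiagonal form one then reads off $\det M = \pm (\det A)^{2} \neq 0$.

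I do not anticipate a serious obstacle. The argument is a formal consequence of the $\sigma$-twisted isometry property of $F$ combined with the vanishing pattern built into the definition of $\Vsymp$, which is tailored precisely to produce the triangular-block structure of $M$. The only point that demands a little care is correctly tracking the $\sigma$-twist through iterated applications of $F$ (using $\sigma$-invariance of $\lambda(b)$); once this bookkeeping is in place, the invertibility of $M$, and hence the lemma, is immediate.
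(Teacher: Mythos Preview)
Your proof is correct and uses essentially the same approach as the paper: both deduce from the twisted isometry $\langle F(x),F(y)\rangle=\lambda(b)\,\sigma(\langle x,y\rangle)$ that $\langle F^i(v),F^j(v)\rangle=0$ for $|i-j|\le n-1$ and $\neq 0$ for $|i-j|=n$, and then extract linear independence from this band structure. The only cosmetic difference is packaging: the paper eliminates the coefficients of a hypothetical relation one by one by pairing against $F^{n-1}(v),F^{n-2}(v),\ldots,F^{-n}(v)$, while you organize the same pairings into the Gram matrix and read off invertibility from its block-antidiagonal, triangular-block shape.
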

\begin{proof}
{\cred Note} that for $v_{1}, v_{2}\in V,$ we have 
\begin{equation}
\label{eqn:Fsymplectic}
\langle F(v_{1}),F(v_{2})\rangle=\lambda(b)\sigma(\langle v_{1}, v_{2}\rangle).
\end{equation}
Therefore, for $v\in V^{\mathrm{symp}}_b$, we get 
\begin{equation}
\label{eq:FiFj}
\langle F^{i}(v),F^{j}(v)\rangle
\left\{\begin{aligned}
&=0 \wh |i-j|\leq n-1,\\
&\neq0 \wh |i-j|=n.
\end{aligned}\right.
\end{equation}
Suppose that $\sum_{i=0}^{2n-1}c_iF^{i}(v)=0$ for $c_{i} \in \breve{K}$.
Applying $\langle F^{n-1}(v),-\rangle$ on the equation, we get $c_{2n-1}=0$ by (\ref{eq:FiFj}).
Similarly, applying $\langle F^{n-2}(v),-\rangle,  \ldots \langle F^{-n}(v),-\rangle$, we get $c_{2n-2}=\cdots=c_{0}=0$. {\cred This} finishes {\cred the} proof.
\end{proof}

Next, we will define a morphism from $\Vsymp$ to $\GSp_{2n}(\breve{K})$.
\begin{definition}
We put 
 \[
 V_{k}:=\varpi^{k} (b\sigma)^{-1} = \varpi^{k} \sigma^{-1}b^{-1},
 \]
 which is a $K$-linear morphism from $V$ to $V$.
 {\cred Note that $V_k = \varpi^k F^{-1}$.}
 We also put 
$\alpha_{v}:=\langle v,F^{n}(v)\rangle \in \breve{K}^{\times}$ for $v \in \Vsymp$.
We fix an element $v\in \Vsymp$.
We put
\[
G_{1} (v):= 
(-1)^{n+1} \frac{\alpha_{v}}{\sigma^{-1}(\alpha_{v})}(V_{k}(v) - \frac{\langle V_{k}(v), F^{n}(v)\rangle}{\alpha_{v}}v).
\]
We also put
\[
G_{i+1}(v):=  \frac{\alpha_{v}}{\sigma^{-1}(\alpha_{v})} (V_{k}(G_{i}(v))-\frac{\langle V_{k}(G_{i}(v)), F^{n}(v)\rangle}{\alpha_{v}}v)
\]
for $1 \leq i \leq n-2$ inductively.
We define
\begin{eqnarray*}
g_{b,r}(v)&:=&(v,\varpi^{r}F(v),\ldots,\varpi^{(n-2)r}F^{n-2}(v), \varpi^{(n-1)r}F^{n-1}(v),\\
&\quad&\quad\varpi^{r}G_{1}(v),\varpi^{2r}G_{2}(v),\ldots,\varpi^{(n-1)r}G_{n-1}(v), \varpi^{nr}F^{n}(v)).
\end{eqnarray*}
{\cred 
Note that we have $\langle F^{i}(v), G_1 (v) \rangle =0$ for $1\leq i\leq n-2$ since $v\in \Vsymp$ (cf.\ \eqref{eqn:Fsymplectic}).
Also, clearly we have $\langle F^n (v), G_1 (v) \rangle =0$.
Moreover, we have 
\begin{eqnarray*}
\langle F^{n-1} (v), G_1 (v) \rangle &=& (-1)^{n+1} \frac{\alpha_{v}}{\sigma^{-1}(\alpha_{v})} \langle F^{n-1} (v), V_k (v) \rangle  \\
&=& (-1)^{n+1} \frac{\alpha_{v}}{\sigma^{-1}(\alpha_{v})} (-\varpi^{-k}) \sigma^{-1} (\varpi^k \langle F^{n} (v), v \rangle) = (-1)^{n+1} \alpha_v,
\end{eqnarray*}
where we use (\ref{eqn:Fsymplectic}) in the second equality.
}
{\cred Inductively, we can show that 
\[
\langle F^i (v), G_{j} (v) \rangle  =
\begin{cases}
0 & \textup{if } 0\leq i \leq n, 1 \leq j \leq n-1, \textup{ and }i+j \neq n, \\
(-1)^{i}\alpha_v  & \textup{if } 1 \leq i \leq n-1 \textup{ and }i+j =n.
\end{cases}
\]
Moreover, we have $\langle G_i (v), G_j (v) \rangle  = 0$ for $1 \leq i,j \leq n-1$ by (\ref{eqn:Fsymplectic}) and $v\in \Vsymp$.
Therefore,} we have $g_{b,r}(v) \in \GSp_{2n}(\breve{K})$ with 
$\lambda(g_{b,r}(v)) = \varpi^{nr}\alpha_{v}$.
\end{definition}

\begin{remark}
\label{remark:hb}
There are other candidates {\cred for} $g_{b,r}$. 
We take $v \in \Vsymp$, and
we put $H_{n}(v) := F^{n}(v)$.
Inductively, we define
\[
H_{i} (v) :=
\varpi^{-k} \frac{\alpha_{v}}{\sigma(\alpha_{v})}(F(H_{i+1}(v))
-\frac{\langle v, F(H_{i+1}(v)) \rangle}{\alpha_{v}} F^{n}(v)
)
\]
for $1\leq i \leq n-1$.
We put 
\begin{align*}
h_{b} (v):=& (v, \ldots, F^{n-1}(v), H_{1} (v), \ldots, H_{n} (v)) \\
&\cdot 
\diag(1, \varpi^{\lceil \frac{-k}{2} \rceil}, \ldots, \varpi^{\lceil \frac{-k(i-1)}{2} \rceil}, \ldots, \varpi^{\lceil \frac{-k(n-1)}{2} \rceil}, \varpi^{\lceil \frac{-kn}{2} \rceil + \lfloor \frac{-k(n-1)}{2} \rfloor}, \ldots, \varpi^{\lceil \frac{-kn}{2} \rceil}).
\end{align*}
Then $g'_{b} (v) \in \GSp_{2n} (\breve{K})$ as well. We use this construction in Subsection \ref{subsection:familyXh}.
\end{remark}

\begin{lemma}
\label{lemma:order}
We have
\begin{equation}
\label{eqn:gbrdlv}
F(g_{b,r}(v))=g_{b,r}(v)w_{r}A_{b,r} \textup{ with } A_{b,r}=
\left(\begin{array}{cccc|cccc}
1&&&&{b}_{1}&\cdots&b_{n-1}&a_{n}\\
&\ddots&&&&&&a_{n-1}\\
&&\ddots&&&&&\vdots\\
&&&1&&&&a_{1}\\\hline
&&&&\frac{\sigma(\alpha_{v})}{\alpha_{v}}&&&\\
&&&&&\ddots&&\\
&&&&&&\ddots&\\
&&&&&&&\frac{\sigma(\alpha_{v})}{\alpha_{v}}
\end{array}\right),
\end{equation}
where ${a}_{i}, {b}_{i}$ are {\cred elements} of $\breve{K}$ such that
{\cred
\[
\ord\, {a}_{i} \geq ir+\frac{k}{2}i, \quad \ord\, {b}_{i} \geq ir+\frac{k}{2}i.
\]
}
Note that $A_{b,r}\in G(\breve{K})$, and so $a_{i}= (-1)^{n+i} b_{i}$ for $i=1,\ldots n-1$.
In particular,
we have
$A_{b,r} \in I^{m}$
if $ \lceil r + \frac{k}{2} \rceil (= r+k) \geq m+1$.
\end{lemma}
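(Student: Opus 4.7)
The plan is a direct, column-by-column verification of the claimed matrix identity. First, using the sparsity of $w_{r}$, I would read off the columns of $g_{b,r}(v) w_{r}$: the $i$-th column for $1 \le i \le n$ is $\varpi^{(i-1)r} F^{i}(v)$, the $(n+1)$-st column is $(-1)^{n+1}\varpi^{r+k} v$, and for $2 \le i \le n$ the $(n+i)$-th column is $\varpi^{ir+k} G_{i-1}(v)$. Since $F$ is $\sigma$-semilinear with $\sigma$-fixed scalar $\varpi$, the $i$-th column of $F(g_{b,r}(v))$ is obtained by applying $F$ column-wise: $\varpi^{(i-1)r} F^{i}(v)$ for $1 \le i \le n$, $\varpi^{(i-n)r} F(G_{i-n}(v))$ for $n+1 \le i \le 2n-1$, and $\varpi^{nr} F^{n+1}(v)$ for $i = 2n$. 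The first $n$ columns then match immediately, yielding the identity block in the upper-left of $A_{b,r}$ and the zero lower-left block.

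For the middle columns ($n+1 \le i \le 2n-1$), I would invert the recursion defining $G_{j}(v)$ to write $V_{k}(G_{j-1}(v))$ in terms of $G_{j}(v)$ and $v$, apply $F$ to both sides, and use $F \circ V_{k} = \varpi^{k} \cdot \mathrm{id}$. This produces
\[
\varpi^{jr} F(G_{j}(v)) = \frac{\sigma(\alpha_{v})}{\alpha_{v}} \cdot \varpi^{jr+k} G_{j-1}(v) + b_{j} \cdot F(v),
\]
with $b_{j}$ an explicit scalar of the form $(\text{sign}) \cdot \varpi^{jr} \sigma\langle V_{k}(G_{j-1}(v)), F^{n}(v) \rangle / \alpha_{v}$; in the boundary case $j = 1$, the extra $(-1)^{n+1}$ factor in the definition of $G_{1}$ is absorbed exactly into the sign of $(g_{b,r}(v) w_{r})_{n+1}$. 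Reading off these coefficients against the computed columns of $g_{b,r}(v) w_{r}$ furnishes the $\sigma(\alpha_{v})/\alpha_{v}$ on the diagonal of the lower-right block and the $b_{j}$ as the unique nonzero off-diagonal entries, each appearing in the first row. For the last column, I would expand $\varpi^{nr} F^{n+1}(v)$ as a linear combination of $\varpi^{(i-1)r} F^{i}(v)$ for $1 \le i \le n$ together with $\varpi^{nr+k} G_{n-1}(v)$; the $G_{n-1}$-coefficient equals $\sigma(\alpha_{v})/\alpha_{v}$, which can be pinned down via the similitude character $\lambda(A_{b,r}) = \sigma(\alpha_{v})/\alpha_{v}$ (computed from $\lambda(g_{b,r}(v)) = \varpi^{nr}\alpha_{v}$ and $\lambda(w_{r}) = \varpi^{k}$).

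The main technical obstacle will be the valuation estimates $\ord a_{i}, \ord b_{i} \ge ir + ki/2$. For $b_{i}$ these should follow by induction from the recursive formula above: each step contributes a factor of $\varpi^{r}$ from the multiplicative normalization and a further factor of valuation $\ge k/2$ from the inner-product term, the latter extracted from the symplectic vanishing $\langle v, F^{j}(v) \rangle = 0$ for $0 < |j| < n$ combined with the $F$-equivariance $\langle Fx, Fy\rangle = \lambda(b) \sigma \langle x, y\rangle$ (noting $\ord \lambda(b) = k$). The bound on $a_{i}$ then follows from the identity $a_{i} = (-1)^{n+i} b_{i}$, itself forced by $A_{b,r} \in G(\breve{K})$, i.e., the symplectic relation $A_{b,r}^{T} \Omega A_{b,r} = (\sigma(\alpha_{v})/\alpha_{v}) \Omega$ applied to the upper-right block. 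The final assertion $A_{b,r} \in I^{m}$ when $r + k \ge m+1$ is then immediate: for every $i \ge 1$ and $k \in \{0,1\}$ one has $\lceil ir + ki/2 \rceil \ge r + k \ge m+1$, so every off-diagonal entry lies in $\p^{m+1}$, while the diagonal entries are either $1$ or the unit $\sigma(\alpha_{v})/\alpha_{v}$.
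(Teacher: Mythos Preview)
Your column-by-column identification of $A_{b,r}$ matches the paper's argument exactly: the first $n$ columns are immediate, the columns $n+1,\dots,2n-1$ come from inverting the $G_j$-recursion (the paper's equations for $F(G_1)$ and $F(G_{i+1})$), and the last column is obtained by expanding $F^{n+1}(v)$ in the basis $F(v),\dots,F^{n}(v),G_{n-1}(v)$ and using pairings to kill the spurious coefficients. So far so good.

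The gap is in the valuation estimates. Your inductive claim that each $b_j$ picks up ``a further factor of valuation $\ge k/2$ from the inner-product term, the latter extracted from the symplectic vanishing'' does not hold up. Writing $G_{j-1}(v)$ as a combination of $V_k^{l}(v)=\varpi^{lk}F^{-l}(v)$ for $0\le l\le j-1$, the relevant pairing $\langle V_k(G_{j-1}(v)),F^{n}(v)\rangle$ unwinds to a combination of
\[
\varpi^{lk}\langle F^{-l}(v),F^{n}(v)\rangle=\varpi^{lk}\lambda(b)^{-l}\sigma^{-l}\langle v,F^{n+l}(v)\rangle,
\]
and since $\ord\lambda(b)^{-l}=-lk$ the powers of $\varpi$ cancel. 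You are then left needing a lower bound on $\ord\langle v,F^{n+l}(v)\rangle$ for $l\ge 1$, and the symplectic conditions $\langle v,F^{j}(v)\rangle=0$ for $1\le j\le n-1$ say nothing about these. In addition, even granting a bound on the $b_i$, the relation $a_i=(-1)^{n+i}b_i$ only holds for $i=1,\dots,n-1$; the entry $a_n$ in the last column is not tied to any $b_j$, so your route leaves it unbounded.

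The paper closes both issues at once with a slope argument: it applies $F^{n-1}$ to the expansion $F^{n+1}(v)=\sum_{i=1}^{n}c_iF^{i}(v)+d_{n-1}G_{n-1}(v)$ to obtain the ``characteristic'' relation $F^{2n}(v)=\sum e_jF^{j}(v)$, and then invokes \cite[Lemma 5.2.4]{Kedlaya2005} (Newton slope $k/2$) to get $\ord c_{n+1-i}\ge ki/2$ for all $i=1,\dots,n$. This gives $\ord a_i\ge ir+ki/2$ directly, including $a_n$, and the $b_i$ bound then follows from the symplectic relation---the opposite of the direction you proposed.
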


\begin{proof}

We have
\begin{eqnarray*}
F(g_{b,r}(v)) &=&(F(v), \varpi^{r}F^{2}(v)
\ldots,
\varpi^{(n-2)r} F^{n-1}(v),
\varpi^{(n-1)r} F^{n}(v), \\
&\quad&\quad\varpi^{r} F( G_{1}(v)),
\varpi^{2r} F(G_{2}(v)),
\ldots,\varpi^{(n-1)r}F(G_{n-1}(v)), \varpi^{nr} F^{n+1}(v))
\end{eqnarray*}
and
\begin{eqnarray*}
g_{b,r}(v)w_{r} &=& 
(F(v), \varpi^{r}F^{2}(v), \ldots, \varpi^{(n-2)r}F^{n-1}(v), \varpi^{(n-1)r}F^{n}(v),  \\
&\quad&\quad (-1)^{n+1} \varpi^{r+k}v, \varpi^{2r+k}G_{1}(v), \ldots, \varpi^{(n-1)r+k} G_{n-2}(v), \varpi^{nr+k}G_{n-1} (v)).
\end{eqnarray*}
Therefore, for the $1$st, $\ldots$, $n$-th column, the equality (\ref{eqn:gbrdlv}) is obvious.

Next, we compute the $(n+1), \ldots, (2n-1)$-th column of $A_{b,r}.$
By definition, we have

\begin{equation}
\label{eqn:Grep1}
F(G_{1}) (v) = 
(-1)^{n+1} \frac{\sigma(\alpha_{v})}{\alpha_{v}} (\varpi^{k} v - \frac{
\sigma(\langle V_{k}(v), F^{n}(v) \rangle)}{\sigma(\alpha_{v})} F(v)).
\end{equation}
Similarly, we have
\begin{equation}
\label{eqn:Grep2}
F(G_{i+1}(v))
=
\frac{\sigma(\alpha_{v})}{\alpha_{v}} (\varpi^{k} G_{i}(v) - \frac{\sigma(\langle V_{k}(G_{i}(v)), F^{n}(v) \rangle)}{\sigma(\alpha_{v})} F(v)).
\end{equation}
These formulas correspond to  $(n+1), \ldots, 2n-1$-th columns of the equality (\ref{eqn:gbrdlv}). Note that we {\cred have not yet estimated} the orders of $b_{i}$.

Finally, we compute the $2n$-th column of $A_{b,r}$.
We can write
\[
F^{n+1}(v) = \sum_{i=0}^{n} c_{i} F^{i} ({\cred v}) + \sum_{j=1}^{n-1} d_{j} G_{j}(v).
\]
Applying $\langle F^{i}(v),-\rangle$ ($i=2,3,\ldots,n$) to this equation, we have $d_{1}=\cdots=d_{n-2}=c_{0}=0.$

Moreover, applying $\langle F(v),-\rangle$, we have 
$$
d_{n-1}= \frac{\sigma(\alpha_{v})}{\alpha_{v}}\varpi^{k}.
$$
Thus we have verified the form of {\cred the} $2n$-th column and {\cred that} $a_{i}=\varpi^{ir}c_{n+1-i}$ ($i=1,\ldots,n$).

Finally, we should estimate the order of $a_{i}$. Now we have the equation
\begin{equation}
\label{eqn:FGformula}
F^{n+1}(v) = \sum_{i=1}^{n} c_{i} F^{i} (v) + d_{n-1} G_{n-1} (v).
\end{equation}
Note that $G_{n-1}(v)$ can be written by a linear combination of $v, V(v), \ldots V^{n-1} (v)$ by definition.
Applying $F^{n-1}$ to {\cred the} equation (\ref{eqn:FGformula}), we have
\[
F^{2n}(v)- \sum_{i=1}^{n} \sigma^{n-1}(c_{i}) F^{i+n-1}(v)- \sigma^{n-1} (d_{n-1}) F^{n-1} (G_{n-1} (v)),
\]
where $F^{n-1}(G_{n-1} (v))$ can
be written by a linear combination of $F^{n-1} (v), \ldots, v$.
Since the slope of $b$ is $\frac{k}{2}$, by \cite[Lemma 5.2.4]{Kedlaya2005}, we get 
\[
\ord c_{n+1-i} \geq \frac{k}{2} i. 
\]
Therefore, we have
\[
\ord a_{i} = ir + \ord c_{n+1-i} \geq ir + \frac{k}{2} i.
\]
\end{proof}

\begin{remark}
\label{remark:lattice}
Let $v \in \Vsymp$.
We define the non-commutative ring $\mathcal{D}_{k}$ by
\[
\mathcal{D}_{k} := \co [F,V_{k}] / (FV_{k}=V_{k}F= \varpi^{k}, aV_{k} = V_{k} \sigma(a), Fa = \sigma (a) F)_{a\in \co}.
\]
By the proof of Lemma \ref{lemma:order} (more precisely, equations (\ref{eqn:Grep1}) and (\ref{eqn:Grep2})) 
\[
\la := \co v \oplus  \cdots \oplus \co F^{n-1}(v) \oplus \co G_{1} (v) \oplus \cdots 
\oplus \co G_{n-1} (v) \oplus  \co F^{n} (v)
\]
is contained in $\mathcal{D}_{k} v$.
Note that $\la$ is self-dual up to {\cred a} constant.
Since we have
\[
\varpi^{k} \la \subset F \la \subset \la 
\]
by Lemma \ref{lemma:order} again, we have
{\cred $\mathcal{D}_{k} v = \la.$}
\end{remark}

{\cred Now we can state the main comparison theorem, which is an analogue of \cite[Theorem 6.5]{Chan2021}.}
\begin{theorem}
\label{theorem:comparison}
Assume 
$ r+k  \geq m+1${\cred .}
Then there exist the following two $J_b(K)$-equivariant commutative diagrams with bijective horizontal arrows.
\[
\xymatrix{
\{v\in \Vsymp\mid \alpha_{v}:=\langle v,F^{n}(v)\rangle \in K^{\times}\} \ar[r]^-{g_{b,0}}_-{\sim} \ar[d] & X_{w}^{(U)}(b) \ar[d]\\
\Vsymp/\breve{K}^{\times} \ar[r]^{g_{b,0}}_{\sim} &  X_{w}^{(B)}(b)
}
\]
\[
\xymatrix{
\{v\in \Vsymp\mid\frac{\sigma(\alpha_{v})}{\alpha_{v}}\equiv1 \mod \p^{m+1}\}/\dot{\sim}_{b,m,r} \ar[r]^-{g_{b,r}}_-{\sim} \ar[d] & \dot{X}_{w_{r}}^{m}(b)(\overline{k}) \ar[d] \\
\Vsymp/\sim_{b,m,r}  \ar[r]^-{g_{b,r}}_-{\sim} & X_{w_{r}}^{m}(b)(\overline{k})
}
\]
Here, we {\cred define}
$$
v_{1}\sim_{b,m,r} (resp.\,\dot{\sim}_{b,m,r})\, v_{2}\overset{\mathrm{def}}{\iff} g_{b,r}(v_{1})I^{m} (resp.\,\dot{I}^m) =g_{b,r}(v_{2})I^{m} (resp.\,\dot{I}^m).
$$
for $v_{1}, v_{2} \in \Vsymp$.
\end{theorem}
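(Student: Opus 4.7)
The plan is to follow the blueprint of \cite[Theorem 6.5]{Chan2021}, adapted to the symplectic setting. Well-definedness of the four horizontal arrows is already packaged in Lemma \ref{lemma:order}: the identity $g_{b,r}(v)^{-1}F(g_{b,r}(v)) = w_r A_{b,r}$ with $A_{b,r}$ upper triangular of the stated shape yields $A_{b,0} \in B(\breve{K})$ in general, $A_{b,0} \in U(\breve{K})$ when $\alpha_v \in K^{\times}$, $A_{b,r} \in I^m$ whenever $r+k \geq m+1$, and $A_{b,r} \in \dot{I}^m$ when in addition $\sigma(\alpha_v)/\alpha_v \equiv 1 \pmod{\p^{m+1}}$. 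Commutativity of the two squares follows from the inclusions $U \subset B$ and $\dot{I}^m \subset I^m$, while $J_b(K)$-equivariance reduces to the observation that every ingredient in the definition of $g_{b,r}(v)$---namely $F^i(v)$, $G_i(v)$, and the normalizations by $\alpha_v$---is equivariant under the action of $J_b(K)$ on $V$ by $F$-equivariant symplectic similitudes, up to a scalar that already lies in the relevant subgroup.

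Injectivity of the two horizontal arrows of the lower diagram is tautological: the equivalence relations $\sim_{b,m,r}$ and $\dot{\sim}_{b,m,r}$ are defined precisely so as to enforce it. For the top arrow of the upper diagram, $U$ fixes $e_1$ and the first column of $g_{b,0}(v)$ is $v$, so $g_{b,0}(v_1)U = g_{b,0}(v_2)U$ immediately gives $v_1 = v_2$. For the bottom arrow, $B$ stabilizes the line $\breve{K}e_1$ but scales $e_1$ freely, so the same argument yields $v_1 \in \breve{K}^{\times}\cdot v_2$, matching the quotient $\Vsymp/\breve{K}^{\times}$.

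The substantive content is surjectivity. Given an element of the target, represent it by $g \in G(\breve{K})$ and set $v := g\cdot e_1$. The Deligne--Lusztig condition on $g$ constrains $F$ applied to the early columns of $g$ in terms of later columns, and pairing against $F^j(v)$ together with (\ref{eq:FiFj}) then forces $v \in \Vsymp$. It remains to right-multiply $g$ by some element of $U$ (resp.\ $B$, $I^m$, $\dot{I}^m$) to reach the prescribed normal form $g_{b,r}(v)$, whose columns are $\varpi^{ir}F^i(v)$ for $0 \leq i \leq n-1$, together with the symplectically orthogonalized vectors $\varpi^{ir}G_i(v)$ and $\varpi^{nr}F^n(v)$. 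This normalization is carried out by an inductive sequence of symplectic elementary transformations on the columns of $g$.

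The main difficulty is executing these elementary transformations. Unlike in the $\GL_n$ case, each column operation on $g$ must preserve the form $\Omega$, and hence must be paired with a dual operation on the column indexed by $2n+1-i$; keeping track of these paired moves against the order bounds supplied by Lemma \ref{lemma:order} is combinatorially intricate. In the semi-infinite case, only finitely many paired transformations are required. In the Iwahori case, by contrast, one must assemble an \emph{infinite} product of paired symplectic transformations and verify convergence inside $I^m$ (resp.\ $\dot{I}^m$) by showing that the $\p$-adic orders of the corrections introduced at successive stages grow without bound. This infinite-product convergence, which does not arise in the $\GL_n$ argument of \cite[Theorem 6.5]{Chan2021}, is the essential new ingredient that the proof must supply.
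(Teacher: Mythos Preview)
Your outline matches the paper's approach: well-definedness from Lemma~\ref{lemma:order}, tautological injectivity, and surjectivity via paired symplectic elementary transformations, with finitely many moves in the semi-infinite case and a convergent infinite product in the Iwahori case. Two minor corrections are in order. First, invoking (\ref{eq:FiFj}) to deduce $v=ge_1\in\Vsymp$ is circular, since (\ref{eq:FiFj}) already presupposes $v\in\Vsymp$; the paper instead picks a representative with $g^{-1}F(g)\in w_rB$ (resp.\ $w_rI^m$), reduces $C=(g^{-1}F(g))^{-1}$ to a normal form $C'$ by $\sigma$-conjugation, and only then reads off that the first column of $gP$ lies in $\Vsymp$ from the fact that $gP\in\GSp(V)$ with columns $F^i(g'_1),G_i(g'_1)$. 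Second, the order estimates that govern the Iwahori-level moves and their convergence come from the explicit shape of $C$ in (\ref{C2}) rather than from Lemma~\ref{lemma:order}, which enters only for well-definedness.
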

The proof of this theorem is given in the next section.

\section{Proof of comparison theorem}
\label{section:pfcomparison}

\subsection{Proof for the semi-infinite case}
\label{subsection:pfsemiinf}
In this subsection, we verify the existence of the first diagram in Theorem \ref{theorem:comparison}.
By Lemma \ref{lemma:order}, the well-definedness and the $J_{b}(K)$-{\cred equivariance} of {\cred the} horizontal maps are obvious.
Note that for any $c \in\breve{K}$, {\cred there exists a diagonal matrix $A\in G(\breve{K})$ such that $g_{b,r}(cv)=g_{b,r}(v)A$.}
Moreover, the injectivity of the maps is obvious too. 
In this section, we will prove the surjectivity in the first diagram. 
 
To begin with, we consider the bottom map of the first diagram. Take any element $gB(\breve{K})\in X_{w}^{(B)}(b)$. Then $g^{-1}F(g)\in B(\breve{K})wB(\breve{K}).$
Replacing $g$ with a suitable representative, we may assume $g^{-1}F(g)\in wB(\breve{K}).$ 
Hence we have $g=F(g)C$ with
\begin{equation}
\label{C}
C\in
\left(\begin{array}{cccccc|ccccccc}
\breve{K}&\breve{K}^{\times}&\breve{K}&\cdots&\cdots&\breve{K}&\breve{K}&\cdots&\cdots&\breve{K}&\breve{K}&\breve{K}\\
\vdots&0&\breve{K}^{\times}&\ddots&\ddots&\vdots&\vdots&\vdots&\vdots&\vdots&\vdots&\vdots\\
\vdots&\vdots&0&\ddots&\ddots&\vdots&\vdots&\vdots&\vdots&\vdots&\vdots&\vdots\\
\vdots&\vdots&\vdots&\ddots&\ddots&\breve{K}&\vdots&\vdots&\vdots&\vdots&\vdots&\vdots\\
\vdots&\vdots&\vdots&\vdots&\ddots&\breve{K}^{\times}&\vdots&\vdots&\vdots&\vdots&\vdots&\breve{K}\\
\breve{K}&\vdots&\vdots&\vdots&\vdots&0&\vdots&\vdots&\vdots&\vdots&\vdots&\breve{K}^{\times}\\\hline
\breve{K}^{\times}&\vdots&\vdots&\vdots&\vdots&\vdots&\breve{K}&\vdots&\vdots&\vdots&\vdots&0\\
0&\vdots&\vdots&\vdots&\vdots&\vdots&\breve{K}^{\times}&\ddots&\vdots&\vdots&\vdots&\vdots\\
\vdots&\vdots&\vdots&\vdots&\vdots&\vdots&0&\ddots&\ddots&\vdots&\vdots&\vdots\\
\vdots&\vdots&\vdots&\vdots&\vdots&\vdots&\vdots&\ddots&\ddots&\breve{K}&\vdots&\vdots\\
\vdots&\vdots&\vdots&\vdots&\vdots&\vdots&\vdots&\ddots&\ddots&\breve{K}^{\times}&\breve{K}&\vdots\\
0&0&0&\cdots&\cdots&0&0&\cdots&\cdots&0&\breve{K}^{\times}&0
\end{array}\right).
\end{equation}

Note that for $P\in G(\breve{K}),$ we have
\begin{equation}
\label{eqn:sigmaconj}
gP=F(g)CP=F(gP)(\sigma(P)^{-1}CP).
\end{equation}

\begin{claim}
\label{claim:C'}
Suppose that there exists a $P \in B(\breve{K})$ such that
$$\sigma(P)^{-1}CP=C'\quad\text{with }$$
$$
C'=
\left(\begin{array}{cccc|cccc}
\ast&\ast&&\Lsymb{0}&\ast&\cdots&\ast&0\\
0&&\ddots&&&&\ast&\vdots\\
\vdots&&&\ast&&\hsymb{0}&\vdots&0\\
0&&&&&&\ast&\ast\\\hline
\ast&&&&&&&\\
&&&&\ast&&\hsymb{0}&\\
&\hsymb{0}&&&&\ddots&&\\
&&&&&&\ast&\\
\end{array}\right).
$$
Then the bottom map of the first diagram in Theorem \ref{theorem:comparison} is surjective.
\end{claim}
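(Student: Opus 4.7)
The plan is to extract, from any representative $g$ of a class in $X_w^{(B)}(b)$, a vector $v \in V_b^{\mathrm{symp}}$ with $g_{b,0}(v) B(\breve{K}) = gB(\breve{K})$. By (\ref{eqn:sigmaconj}) and the hypothesis of the claim, $gP$ satisfies $gP = F(gP) C'$ with $C'$ of the prescribed shape, and $g B(\breve{K}) = gP B(\breve{K})$ since $P \in B(\breve{K})$, so it suffices to produce the desired $v$ for $gP$.

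Let $u_1,\ldots,u_{2n}$ denote the columns of $gP$ and set $v := u_1$. Reading off each column of the equation $gP = F(gP) C'$ as $u_k = \sum_j C'_{j,k} F(u_j)$ and using the shape of $C'$ (which is, modulo $B(\breve{K})$, a representative of $w^{-1}$), I would show by induction on $i$ that
\[
\mathrm{span}_{\breve{K}}(u_1,\ldots,u_i) \;=\; \mathrm{span}_{\breve{K}}(v, F(v), \ldots, F^{i-1}(v)) \qquad (1 \le i \le n).
\]
Since $gP \in \GSp_{2n}(\breve{K})$, its first $n$ columns span a Lagrangian subspace of $V$, which yields $\langle v, F^i(v) \rangle = 0$ for $0 \le i \le n-1$. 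For the remaining columns, the shape of $C'^{-1}$ forces $F(u_{n+1}) \in \mathrm{span}_{\breve{K}}(u_1) = \mathrm{span}_{\breve{K}}(v)$, so that $u_{n+1}$ is a nonzero scalar multiple of $F^{-1}(v)$. Continuing this column-by-column analysis, and matching against the recursive definition of the $G_j(v)$, one obtains
\[
\mathrm{span}_{\breve{K}}(u_1, \ldots, u_{n+j}) \;=\; \mathrm{span}_{\breve{K}}(v, F(v),\ldots, F^{n-1}(v), G_1(v),\ldots, G_j(v))
\]
for $1 \le j \le n-1$. Finally, $\mathrm{span}_{\breve{K}}(u_1,\ldots,u_{2n}) = V$ and the symplectic pairing of $u_1$ with $u_{2n}$ is nonzero, which forces $\langle v, F^n(v)\rangle \ne 0$; hence $v \in V_b^{\mathrm{symp}}$. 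The flags determined by $gP$ and $g_{b,0}(v)$ then coincide, so $gP B(\breve{K}) = g_{b,0}(v) B(\breve{K})$, proving the surjectivity.

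The main obstacle is the induction step for the indices $n+1 \le k \le 2n$: one has to check that the vector $u_{n+j}$ produced from the column relations for $C'$ really coincides modulo the span of the previous columns with $G_j(v)$ as recursively defined, and in particular that the correction terms $\langle V_0(G_{j-1}(v)), F^n(v)\rangle/\alpha_v$ appearing in the definition of $G_j(v)$ match the off-diagonal coefficients read off from $C'$. This is a bookkeeping argument that uses in an essential way the orthogonality relations $\langle u_i, u_j \rangle = \lambda(gP)\,\Omega_{i,j}$ automatically satisfied by $gP \in \GSp_{2n}(\breve{K})$, which eliminate most otherwise-possible correction terms and pin down the flag uniquely up to $B(\breve{K})$.
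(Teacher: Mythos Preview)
Your approach is essentially the same as the paper's: read off the column relations from $gP = F(gP)\,C'$ and reconstruct the flag of $gP$ in terms of $v = u_1$. There is, however, a concrete misstatement in your treatment of the columns past index $n$. You claim that the shape of $(C')^{-1}$ forces $F(u_{n+1}) \in \mathrm{span}_{\breve K}(u_1)$, hence that $u_{n+1}$ is a scalar multiple of $F^{-1}(v)$. This is false in general: the correct relation comes from reading column~$1$ of $C'$ directly, which gives $u_1 = (C')_{1,1}\,F(u_1) + (C')_{n+1,1}\,F(u_{n+1})$, so $F(u_{n+1}) \in \mathrm{span}_{\breve K}(u_1, F(u_1))$ and hence $u_{n+1} \in \mathrm{span}_{\breve K}(v, F^{-1}(v))$. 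Since $(C')_{1,1}$ need not vanish, $u_{n+1}$ is typically not a scalar multiple of $F^{-1}(v)$. The same correction propagates to each subsequent step: from column $n+i$ one obtains $F(u_{n+i+1}) \in \mathrm{span}_{\breve K}(F(u_1),\,u_{n+i})$. Fortunately this does not damage your span assertions, because $G_1(v)$ is itself a linear combination of $v$ and $F^{-1}(v)$ with nonzero $F^{-1}(v)$-coefficient, and likewise for the higher $G_j$; so the partial flags still coincide.

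The paper's proof streamlines exactly the ``bookkeeping'' you flag as the main obstacle. Rather than tracking spans, it first multiplies $gP$ on the right by a diagonal matrix $Q \in B(\breve K)$ chosen so that the superdiagonal entries $(1,2),\ldots,(n-1,n),(n,2n)$ of the new $C'$ become exactly $1$. Then the column relations read off as literal equalities $g'_2 = F(g'_1),\ldots,g'_n = F^{n-1}(g'_1)$, $g'_{2n} = F^n(g'_1)$, and the relations for $g'_{n+j}$ match the recursion defining $G_j(v)$ on the nose, giving $g' = g_{b,0}(g'_1)$ exactly rather than merely up to $B(\breve K)$. This normalization costs nothing (diagonal matrices are in $B$) and removes the need for any separate verification that the correction terms in $G_j(v)$ agree with the off-diagonal coefficients of $C'$. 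You should also work directly with $C'$ (via $gP = F(gP)C'$) rather than with $(C')^{-1}$, since the relevant column relations are immediately visible in $C'$ and inverting it is unnecessary extra work.
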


\begin{proof}
We set $(1,2), \ldots, (n-1,n), (n,2n)$-th entries of $C'$ as $u_{1}, \ldots, u_{n-1}, u_{n}$.
Note that $u_{1}, \ldots, u_{n} \in \breve{K}^{\times}$ automatically.
We can take a diagonal matrix $Q = \diag (q_{1}, \ldots, q_{2n}) \in G (\breve{K})$ satisfying the following:
\begin{align}
\begin{aligned}
\label{eqn:qs}
&q_{1}=1,\\
&q_{2}=u_{1}^{-1},\\
&q_{3}=(u_{2}\sigma(q_{2})^{-1})^{-1},\\
&\vdots\\
&q_{n}=(u_{n-1}\sigma(q_{n-1})^{-1})^{-1},\\
&q_{2n}=(u_{n}\sigma(q_{n})^{-1})^{-1}.
\end{aligned}
\end{align}
Then by replacing $P$ by $PQ$, we may assume 
$$
C'=
\left(\begin{array}{cccc|cccc}
\ast&1&&\Lsymb{0}&\ast&\cdots&\ast&0\\
0&&\ddots&&&&\ast&\vdots\\
\vdots&&&1&&\hsymb{0}&\vdots&0\\
0&&&&&&\ast&1\\\hline
\ast&&&&&&&\\
&&&&\ast&&\hsymb{0}&\\
&\hsymb{0}&&&&\ddots&&\\
&&&&&&\ast&\\
\end{array}\right).
$$
On the other hand, $g':= gP:=(g'_{1},\ldots,g'_{2n})$ satisfies $g'=F(g')C'$. 
By (\ref{eqn:sigmaconj}), we have
\begin{eqnarray*}
g'_{2}&=&F(g'_{1}),\\
&\vdots&\\
g'_{n}&=&F^{n-1}(g'_{1}),\\
g'_{2n}&=& F^{n}(g'_{1}).
\end{eqnarray*}
Moreover, we can show that $F(g'_{n+1})$ can be written by a $\breve{K}$-linear combination of $F(g'_{1})$ and $g'_{1}$.
Similarly, $F(g'_{n+i+1})$ can be written by a $\breve{K}$-linear combination of $F(g'_{1})$ and $g'_{n+i}$ for $i=1, \ldots, n-2$.
Since $g' \in \GSp (V)$, we can show that $g'_{1}\in \Vsymp$ and $g'=g_{b,0}(g'_{1}).$
\end{proof}

We will construct $P$ satisfying the assumption in Claim \ref{claim:C'} as a product of the following kinds of elementary matrices of $\GSp_{2n}$.
\begin{itemize}
\item
$
P=1_{2n}+ce_{i,j}+ (-1)^{i-j+1} ce_{2n+1-j, 2n+1-i}
$\\
Here, we set $1\leq i< j\leq2n$, and $j\neq 2n+1-i $. {\cred In particular}, we have $P \in B(\breve{K}).$
For these $P$, the $\sigma$-conjugation $\ast\to \sigma(P)^{-1}\ast P$ act by adding $c$ times the $i$-th column to the $j$-th column, and adding $\pm c$ times the $(2n+1-j)$-th column to the $(2n+1)$-th column, followed by adding $-\sigma(c)$ times $j$-th row to $i$-th row, and adding $\mp \sigma(c)$ times $(2n+1-i)$-th row to $(2n+1-j)$-th row. 
We denote these transformations by $\mathrm{col}_{i\to j}$, $\mathrm{col}_{2n+1-j\to 2n+1-i}$, $\mathrm{row}_{j\to i}$, and $\mathrm{row}_{2n+1-i\to 2n+1-j}.$ Moreover, by the abuse of notation, we use one of them for representing all the above transformations.
\item $
P=1_{2n}+ce_{i,2n+1-i} 
$\\
Here, we set $1\leq i \leq n$, {\cred and hence} we have $ P \in B(\breve{K}).$
The $\sigma$-conjugation by these $P$ act by adding $c$ times the $i$-th column to the $(2n+1-i)$-th column, followed by adding $-\sigma(c)$ times the $(2n+1-i)$-th row to the $i$-th row.
As above, we call these transformations \c{i}{2n+1-i}, and \r{2n+1-i}{i}. Moreover, we use one of them for representing the above $\sigma$-conjugation.
\end{itemize}

We will eliminate entries of $C$ successively {\cred using the} above $P$. While these modifications, we should check that they do not restore already modified entries and that they preserve the form of $C$ as in (\ref{C}). However, in fact, we will use only  
$$
P\in B(\breve{K})\cap wB(\breve{K})w^{-1}=
\left(\begin{array}{cccc|cccc}
\breve{K}^{\times}&0&\cdots&0&\breve{K}&\cdots&\breve{K}&0\\
&\ddots&&&&&&\breve{K}\\
&&\ddots&\hsymb{\breve{K}}&&\hsymb{\breve{K}}&&\vdots\\
&&&\ddots&&&&\breve{K}\\\hline
&&&&\ddots&\hsymb{\breve{K}}&&0\\
&&&&&\ddots&&\vdots\\
&\hsymb{0}&&&&&\ddots&0\\
&&&&&&&\breve{K}^{\times}
\end{array}\right)
$$
in the modification,
so the form of $C$ as in (\ref{C}) is preserved successively. 
Therefore it suffices to check that each modification does not restore already modified entries.

In the following, 
we will denote the $(i,j)$-th entry of matrices under transformations by $(i,j)$.
First, we will modify the $3$rd, $\ldots$, $n$-th columns of {\cred the} above $C$, as {\cred follows}.  
{\cred
\[
\begin{array}{l l}
\text{$3$rd column:} & \text{We use \c{2}{3}  to eliminate } (1,3),\\
\text{$4$th column:} & \text{We use \c{2}{4},\,\c{3}{4} to eliminate } (1,4),(2,4),\\
                             & \vdots \\
\text{$n$-th column:}& \text{We use \c{2}{n},\ldots,\c{n-1}{n}  to eliminate } (1,n),\ldots,(n-2,n).
\end{array}
\]
}
Here, for the $i$-th column, we should check that \c{j}{i} ($j=2, \ldots, i-1$) do not restore already modified entries. \c{j}{i} is together with \c{2n+1-i}{2n+1-j}, \r{i}{j}, and \r{2n+1-j}{2n+1-i}. 
Since the $j$-th column is already modified, \c{j}{i} affects only $(j-1,i)$.
The transformations \c{2n+1-i}{2n+1-j}, \r{2n+1-j}{2n+1-i} do not affect {\cred the entries we have set to zero} obviously. By the form of the $i$-th row, \r{i}{j} affects only entries on the $1$st and the $l$-th column ($l \geq i+1$), so it does not restore {\cred the entries we have set to zero} too.

Next, we will eliminate the $2n$-th column {\cred except for the} $(n, 2n)$-th entry by using \c{2}{2n}, $\ldots$, \c{n}{2n}.  
Here, we use \c{j}{2n} for $2 \leq j \leq n$, which are together with \c{1}{2n+1-j}, \r{2n}{j}, and \r{2n+1-j}{1}. As before, \c{j}{2n} affects only $(j-1,2n)$. Clearly, \c{1}{2n+1-j} is irrelevant to {\cred the entries we have set to zero}, and \r{2n}{j} only affect $(j,2n-1)$. For \r{2n+1-j}{1}, {\cred in the $(2n+1-j)$-th row,} {\cred the entries not yet set to zero} lie on $(2n-j)$, $\ldots,$ $(2n-1)$-th column (resp.\,the $1$st, $(n+1), \ldots, (2n-1)$-th column) if $j<n$ (resp.\,if $j=n$). {\cred In particular,} they are in the $1, (n+1), \ldots, (2n-1)$-th columns, so \r{2n+1-j}{1} does not affect {\cred the entries we have set to zero}.

Now we have modified the 3rd, \ldots, $n$-th, $2n$-th columns.
{\cred Note} that $(i,j)$ $(n+1\leq i\leq j\leq 2n-1)$ are already modified now. 
Indeed, we have $C \in G(\breve{K})$, so we can use
$$
\left\{
\begin{aligned}
&\langle \text{the } 2n\text{-th column vector}, \text{the } j\text{-th column vector} \rangle=0 \wh i=n+1,\\
&\langle \text{the } (2n+2-i)\text{-th column vector}, \text{the } j\text{-th column vector} \rangle=0 \ow,
\end{aligned}
\right.
$$
to show that the $(i,j)$-th entries are $0$.

Finally, we will eliminate the $(n-1)\times (n-1)$ submatrix of $C$ lying on the $2, \ldots, n$-th rows and $1, (n+1) \ldots, (2n-2)$-th columns.  It is enough to eliminate the {\cred upper-left} half triangular entries of this submatrix, because we have 
\[
\langle i\text{-th row vector}, j\text{-th row vector}\rangle=0
\]
for ($2\leq i , j \leq n$) since  $G(\breve{K})$ is {\cred closed under transpose}.

We will {\cred successively} modify the 1st, $(n+1)$-th, \ldots, $(2n-2)$-th columns as {\cred follows}.
{\cred
\[
\begin{array}{l l}
\text{$1$st column:} & \text{We use \r{n+1}{2}, \ldots, \r{n+1}{n} to eliminate } (2,1), \ldots, (n,1).\\
\text{$n+1$-th column:} & \text{We use \r{n+2}{2}, \ldots, \r{n+2}{n-1} to eliminate } (2, n+1), \ldots, (n-1, n+1).\\
                             & \vdots \\
\text{$2n-2$-th column:} & \text{We use \r{2n-1}{2} to eliminate } (2,2n-2).
\end{array}
\]
}
As before, we should check that these transformations do not affect {\cred the entries we have set to zero}.
For the 1st column, \r{n+1}{j} is together with \r{2n+1-j}{n}, \c{j}{n+1}, and \c{n}{2n+1-j} ($j=2, \ldots n$).
Since the $(n+1)$-th row is already modified, \r{n+1}{j} only affects $(1,j)$. Similarly, \r{2n+1-j}{n} affects only $(n,2n-j)$.
Therefore, these operations do not affect {\cred the entries we have set to zero}.

For the $(n+i)$-th column ($i=1, \ldots, n-2$), we use \r{n+i+1}{j} ($2\leq j \leq n-i$),
which is together with \r{2n+1-j}{n-i}, \c{j}{n+i+1} and \c{n-i}{2n+1-j} (resp.\, \c{j}{n+i+1}) if $2 \leq j < n-i$ (resp.\,$j=n-i$).
Since the $(2n+1-j)$-th row {\cred is zero except at} the $(2n+1-j, 2n+1-j-1)$-th entry, \r{2n+1-j}{n-i} affects only $(2n+1-j-1)$-th column. Since $(2n+1-j-1)> n+i$ if $j <n-i$, the transformation \r{2n+1-j}{n-i} does not affect $n+i$-th column.
Clearly, \c{j}{n+i+1} does not affect {\cred the entries we have set to zero}. Moreover, \c{n-i}{2n+1-j} does not affect {\cred such entries too} since
$2n> 2n+1-j > n+i+1$ if $j<n-i$. 

Now we complete the procedure, i.e., we have verified
that there exists $P$ satisfying the assumption of Claim \ref{claim:C'}.

Next, we consider the upper map of the first diagram in Theorem \ref{theorem:comparison}. Take any element $gU(\breve{K}) \in X_{w}^{(U)}(b),$ then the same arguments show that there exists $v\in \Vsymp$ such that $g_{b,0}(v)U(\breve{K})=gU(\breve{K}).$ Thus we get 
\[
g_{b,0}(v)^{-1}F(g_{b,0}(v))\in U(\breve{K})wU(\breve{K}).
\] 
Hence we get $\lambda(g_{b,0}(v)^{-1}F(g_{b,0}(v)))=\lambda(w),$ and by Lemma \ref{lemma:order}, we have $\sigma(\alpha)=\alpha,$ i.e., $\alpha\in K^{\times}.$ This {\cred completes the} proof for first diagram.

\subsection{Proof for the affine case}
In this subsection, we prove the second diagram in Theorem \ref{theorem:comparison}.
As before, to begin with, we will prove the surjectivity of the bottom map. Take any element $gI^{m}\in X_{w_{r}}^{m}(b) (\overline{k})$, then we get $g^{-1}F(g)\in I^{m}w_{r}I^{m},$ and we can choose a representative so that we have $g^{-1}F(g)\in w_{r}I^{m}.$ Therefore, 
we have $g= F(g) C$ with
\begin{equation}
\label{C2}
C\in 
\left(
\varpi^{-r-k}
\left(\begin{array}{c}
\p^{m+1}\\
\vdots\\
\vdots\\
\p^{m+1}\\\hline
\co^{\times}\\
\p^{m}\\
\vdots\\
\p^{m}
\end{array}\right)
\varpi^{r}
\left(\begin{array}{ccc}
\co^{\times}&&\hsymb{\p}^{m+1}\\
&\ddots\\
&&\co^{\times}\\
&&\\\hline
&&\\
&\hsymb{\p}^{m}&\\
&&\\
&&
\end{array}\right)
\left|\varpi^{-r-k}
\middle(\begin{array}{ccc}
&&\\[6pt]
&&\\[6pt]
&\hsymb{\p}^{m+1}&\\
&&\\\hline
&&\\
\co^{\times}&&\\
&\ddots&\\
\hsymb{\p}^{m}&&\co^{\times}
\end{array}\right)
\varpi^{r}
\left(\begin{array}{c}
\p^{m+1}\\[6pt]
\vdots\\[6pt]
\p^{m+1}\\
\co^{\times}\\\hline
\p^{m}\\
\vdots\\
\vdots\\
\p^{m}
\end{array}\right)
\right).
\end{equation}

\begin{claim}
\label{claim:affineC'}
Suppose that there exists $P \in I^{m}$ such that
$$
\sigma(P)^{-1}CP=C'\quad\text{with }
$$
$$
C'\in
\left(\begin{array}{cccc|cccc}
\ast&\varpi^{r}\co^{\times}&&\Lsymb{0}&\ast&\cdots&\ast&0\\
0&&\ddots&&&&\ast&\vdots\\
\vdots&&&\varpi^{r}\co^{\times}&&\hsymb{0}&\vdots&0\\
0&&&&&&\ast&\varpi^{r}\co^{\times}\\\hline
\ast&&&&&&&\\
&&&&\ast&&\hsymb{0}&\\
&\hsymb{0}&&&&\ddots&&\\
&&&&&&\ast&
\end{array}\right).
$$
Then the bottom map of the second diagram in Theorem \ref{theorem:comparison} is surjective.
\end{claim}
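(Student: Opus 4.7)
The strategy mirrors the proof of Claim~\ref{claim:C'} from the semi-infinite case, adapted to track the $\varpi^{r}$-factors and the level structure $I^{m}$. Given $gI^{m}\in X^{m}_{w_{r}}(b)(\overline{k})$, I choose the representative so that $g^{-1}F(g)\in w_{r}I^{m}$, whence $g=F(g)C$ with $C$ as in (\ref{C2}). By hypothesis there exists $P\in I^{m}$ with $\sigma(P)^{-1}CP=C'$ of the displayed form; replacing $g$ by $gP$ does not alter the coset $gI^{m}$, and by (\ref{eqn:sigmaconj}) the new representative satisfies $g=F(g)C'$.

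Write $u_{1},\ldots,u_{n}\in\varpi^{r}\co^{\times}$ for the entries of $C'$ at the positions $(1,2),(2,3),\ldots,(n-1,n),(n,2n)$. I construct a diagonal matrix $Q=\diag(q_{1},\ldots,q_{2n})$ by setting $q_{1}=1$ and imposing the recursion analogous to (\ref{eqn:qs}),
\[
q_{i+1}=\varpi^{r}\sigma(q_{i})u_{i}^{-1}\quad(i=1,\ldots,n-1),\qquad q_{2n}=\varpi^{r}\sigma(q_{n})u_{n}^{-1},
\]
with the remaining entries $q_{n+1},\ldots,q_{2n-1}$ determined by the similitude condition $q_{i}q_{2n+1-i}=\lambda(Q)=q_{2n}$. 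Since each $u_{i}/\varpi^{r}\in\co^{\times}$, the recursion keeps every $q_{i}\in\co^{\times}$, so $Q\in I^{m}$. Replacing $P$ by $PQ$, which again preserves the coset $gI^{m}$, I may assume that the distinguished entries of $C'$ are exactly $\varpi^{r}$ at positions $(1,2),\ldots,(n-1,n),(n,2n)$.

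Reading off $g=F(g)C'$ column by column (as in the semi-infinite case), the column relations $g_{i+1}=\varpi^{r}F(g_{i})$ propagate to
\[
g_{i}=\varpi^{(i-1)r}F^{i-1}(g_{1})\quad(2\le i\le n),\qquad g_{2n}=\varpi^{nr}F^{n}(g_{1}),
\]
and a completely analogous chase of the columns $n+1,\ldots,2n-1$, combined with the symplectic constraint on $g$, shows that $g_{1}\in\Vsymp$ and that $g_{n+i}\equiv\varpi^{ir}G_{i}(g_{1})$ modulo $I^{m}$ for $1\le i\le n-1$. Thus $g\equiv g_{b,r}(g_{1})$ modulo $I^{m}$, and taking $v:=g_{1}$ yields $gI^{m}=g_{b,r}(v)I^{m}$, proving surjectivity. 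The main technical point is to verify that the diagonal $Q$ simultaneously lies in $I^{m}$ and defines a symplectic similitude; this is precisely where the shape $u_{i}\in\varpi^{r}\co^{\times}$ from the hypothesis is essential, since any weakening would fail to produce units $q_{i}\in\co^{\times}$ in the recursion, and hence would destroy $Q\in I^{m}$.
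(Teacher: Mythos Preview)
Your proof is correct and follows essentially the same approach as the paper: normalize the distinguished entries of $C'$ to $\varpi^{r}$ via a diagonal $Q\in I^{m}$ built from the unit parts $u_{i}/\varpi^{r}$ through the recursion~(\ref{eqn:qs}), then read off the column relations from $g'=F(g')C'$ to identify $g'$ with $g_{b,r}(g'_{1})$. The paper in fact obtains the exact equality $g'=g_{b,r}(g'_{1})$ (not merely equality of $I^{m}$-cosets), since the shape of $C'$ pins down every column of $g'$ on the nose; your phrasing ``$g_{n+i}\equiv\varpi^{ir}G_{i}(g_{1})$ modulo $I^{m}$'' is a slight imprecision, but the coset conclusion you draw is all that surjectivity requires.
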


\begin{proof}
It can be proved by the same argument as in Claim \ref{claim:C'}.
We set $C'$'s the $(1,2), \ldots, (n-1,n), (n,2n)$-th entries as $\varpi^{r} u_1, \ldots, \varpi^{r} u_{n-1}, \varpi^{r} u_{n},$ where $u_{i}\in\co^{\times}$. 
Then we can take a diagonal matrix $Q \in I^{m}$ satisfying the equations \ref{eqn:qs}.
After replacing $P$ with $PQ$, we can show that $g':= gP = (g_{1}' \ldots, g_{2n}')$ satisfies
$g' = g_{b,r} (g_{1}')$ as in the proof of Claim \ref{claim:C'}.
\end{proof}

We will construct $P$ satisfying the assumption in Claim \ref{claim:affineC'} as a product of the following {\cred four} kinds of matrices in $I^{m}$.
\begin{itemize}
\item $
P=1_{2n}+ce_{i,j}+(-1)^{j-i+1}ce_{2n+1-j, 2n+1-i},
$
{\cred where $1\leq i<j\leq2n, j\neq 2n+1-i,$ and $c\in \p^{m}$.}\\
\item $
P=1_{2n}+ce_{i,j}+(-1)^{j-i+1}ce_{2n+1-j, 2n+1-i},
$
{\cred where $1\leq j<i\leq2n, j\neq 2n+1-i,$ and $c\in \p^{m+1}$.}
\end{itemize}
{\cred
Each such $P$ induces a $\sigma$-conjugation that is a simultaneous two-column addition transformation, followed by a simultaneous two-row addition transformation.
}
As before, we write \c{i}{j}, \c{2n+1-j}{2n+1-i}, \r{j}{i}, and \r{2n+1-i}{2n+1-j} for these elementary transformations and $\sigma$-conjugation.
\begin{itemize}
\item $
P=1_{2n}+ce_{i,2n+1-i},
$
{\cred where $1\leq i\leq n$ and $c\in \p^{m}$.}
\item $
P=1_{2n}+ce_{i,2n+1-i},
$
{\cred where $n+1\leq i\leq2n$ and $c\in \p^{m+1}$.}
\end{itemize}

Each {\cred such $P$ induces a} $\sigma$-conjugation that is a column addition transformation followed by a row addition transformation, and we write \c{i}{2n+1-i}, and \r{2n+1-i}{i} for these elementary operations and $\sigma$-conjugation.

We will eliminate the entries of $C$ by $\sigma$-conjugation {\cred using the matrices above}. 
We divide the procedure of elimination into two steps.

\subsection*{{\cred Step 1:} Eliminate lower-left entries of $C$}\mbox{}\par
In this step, we modify the following entries. 
\begin{eqnarray}
\label{eqn:step1}
&&(n+2,1),(n+3,1), \ldots, (2n,1),\\
\label{eqn:step2}
&&(n+3, n+1), \ldots (2n, n+1), (n+4, n+2), \ldots (2n, n+2), \ldots, (2n,2n-2), \\
\label{eqn:step3}
&&(2n,2), \ldots, (n+2,2), (2n-1,3), \dots, (n+2,3), \ldots, (n+2,n), \\
\label{eqn:step4}
&&(2n,3), (2n,4), \ldots, (2n, n), \ldots, (n+3,n).
\end{eqnarray}
For the above entries $(i,j)$, we put
\[
O_{i,j}  (C):= 
\begin{cases}
\ord (i,j) - (-r-k) & \textup{ if }  j \in \{1, n+1, \ldots, 2n-1 \}, \\
\ord (i,j) - r & \textup{ if } j \in \{ 2, \ldots, n, 2n\}.\\
\end{cases}
\]
We also put
\[
\gamma (C) := \min_{(i,j) \in (\ref{eqn:step1}),(\ref{eqn:step2}),(\ref{eqn:step3}),(\ref{eqn:step4})} O_{i,j}(C).
\]
We will modify the above entries so that we raise $\gamma$.
We will construct such a modification as the composition of $\sigma$-conjugation corresponding to the elementary transformation.
In this procedure, we need to check the following.
\begin{itemize}
\item{(X)}
Each $\sigma$-conjugation preserves the form as in (\ref{C2}).
\item{(Y)}
Each $\sigma$-conjugation does not restore already modified entries mod $\p^{\gamma_{i,j}(C){\cred +1}}$.
\item{(Z)}
After each $\sigma$-conjugation $C \mapsto \sigma (P)^{-1} C \sigma(P)$ , $\gamma (\sigma (P)^{-1}C \sigma(P)) \geq \gamma (C)$. 
\end{itemize}

Here, we put
\[
\gamma_{i,j} (C)=
\begin{cases}
\gamma(C) + (-r-k) & \textup{ if }  j \in \{1, n+1, \ldots, 2n-1 \},\\
\gamma(C) + r & \textup{ if } j \in \{ 2, \ldots, n, 2n\}.\\
\end{cases}
\]
Then one can show that the composition of these transformations increases $\gamma$.
Since we only use the elementary matrices $P \in I^{m} \cap w_{r} I^{m} w_{r}^{-1}$, the condition (X) is automatic. Therefore, we only check (Y) and (Z) in the following.

First, we will modify entries as in $(\ref{eqn:step1})$ as {\cred follows}.
{\cred
\[
\begin{array}{l}
\text{We use \r{n+1}{n+2} to eliminate } (n+2,1).\\
\text{We use \r{n+1}{n+3} to eliminate } (n+3,1).\\
\quad\quad    \vdots \\
\text{We use \r{n+1}{2n} to eliminate } (2n,1).
\end{array}
\]
}

Here, to eliminate the $(i,1)$-th entry ($i=n+2, \ldots 2n$), we use \r{n+1}{i}, which is together with \r{2n+1-i}{n}, \c{i}{n+1}, and \c{n}{2n+1-i}.
For $n+2 \leq i \leq 2n-1$, these transformations clearly satisfy (Y) and (Z).
Here, to prove (Z), we use $r- (-r-k) \geq 0$.
Moreover, \r{n+1}{2n}, \r{1}{n} and \c{2n}{n+1} clearly satisfy (Y) and (Z).
On the other hand, 
\c{n}{1} affects $(n+2, 1), \ldots, (2n,1)$, but since $r- (-r-k) >0$ by the assumption, it satisfies (Y) and (Z).

Next, we will modify entries as in $(\ref{eqn:step2})$ as {\cred follows}.
{\cred
\[
\begin{array}{l l}
\text{$n+1$-th column:} & \text{We use \r{n+2}{n+3}, $\ldots$, \r{n+2}{2n} to eliminate } (n+3,n+1), \ldots, (2n, n+1).\\
\text{$n+2$-th column:} & \text{We use \r{n+3}{n+4}, $\ldots,$ \r{n+3}{2n} to eliminate } (n+4,n+2), \ldots (2n, n+2).\\
                             & \vdots \\
\text{$2n-2$-th column:} & \text{We use \r{2n-1}{2n} to eliminate } (2n,2n-2).
\end{array}
\]}
Here, to eliminate the $(i,j)$-th entry $(j=n+1, \ldots, 2n-2, i= j+2, $\ldots$, 2n)$,
we use \r{j+1}{i}, which is together with \r{2n+1-i}{2n-j}, \c{i}{j+1}, and \c{2n-j}{2n+1-i}.
Clearly, \r{2n+1-i}{2n-j} satisfies (Y), and (Z).
Moreover, \c{2n-j}{2n+1-i} clearly satisfies (Y), and (Z) except for $i=2n$. 
Even for $i=2n$, \c{2n-j}{1} satisfies (Y), and (Z) since $r- (-r-k) >0$ as in the modification of $(\ref{eqn:step1})$. 
Since the $(j+1)$-th row is not modified yet, \c{i}{j+1} also satisfies (Y) and (Z).

Third, we will modify entries as in (\ref{eqn:step3}) as {\cred follows.}
{\cred
\[
\begin{array}{l}
\text{We use \c{2n-1}{2}, \ldots \c{n+1}{2} to eliminate } (2n,2), \ldots, (n+2,2).\\
\text{We use \c{2n-2}{3}, \ldots \c{n+1}{3} to eliminate } (2n-1,3), \ldots (n+2,3).\\
\quad\quad    \vdots \\
\text{We use \c{n+1}{n} to eliminate } (n+2,n).
\end{array}
\]}
Here, to eliminate the $(i,j)$-th entry ($j=2, \ldots, n, i= 2n+2-j \ldots, n+2$), we use
\c{i-1}{j}, which is together with \c{2n+1-j}{2n+2-i}, \r{j}{i-1}, and \r{2n+2-i}{2n+1-j} if $i \neq 2n +2-j$ (resp.\,if $i=2n+2-j$).
Since entries on the $(i-1)$-th column have orders $\geq -r-k+1$ except for $(i,i-1)$,
\c{i-1}{j} satisfies (Y) and (Z).
On the other hand, since $j <2n+2-i\leq n$ if $i \neq 2n+2-j$,
\c{2n+1-j}{2n+2-i} satisfies (Y) and (Z).
Moreover, since $2r+k \geq 1$ by the assumption, \r{j}{i-1} does not affect the $(s,t)$-th entries modulo $\p^{\gamma_{s,t}(C)+1}$, even for $(s,t) =(i-1,j+1)$  (resp.\,$(s,t) =(i-1,2n)$) if $j \neq n$ (resp.\,if $j=n$).
Thus \r{i-1}{j} satisfies (Y) and (Z).
Similarly, we can show \r{2n+2-i}{2n+1-j} affects only $(2n+2-j,2n+3-i)$ (resp.\,$(2n+2-j,2n)$) if $2n+2-i \neq n$ (resp.\,if $2n+2-i = n$).

Since 
\[
\langle \textup{the $(n+i)$-th row}, \textup{the $(n+j)$-th row}  \rangle = 0
\]
for $1\leq i,j \leq n$,
{\cred all the} entries \eqref{eqn:step4} have orders $\geq \gamma_{i,j} (C)+1$.
Finally, we have constructed $P_{1} \in I^{m}$ such that $\gamma (\sigma(P_{1})^{-1} C P_{1}) > \gamma (C)$. 
By repeating these construction for $C_{i} := \sigma(P_{i})^{-1} C P_{i}$ ($i=1, \ldots$), we can construct the sequence $P_{i} \in I^{m}$ ($i=1, \ldots$).
Since $P_{i}$ is a product of elementary matrices as above,
we have
\[
P_{i+1} \in (1 + \p^{\gamma(C_{i})}M_{2n} (\co)) \cap \GSp(V)
\]
for $i\geq 0$, where we put $C_{0}:=C$.
Therefore, we can show that the product 
\[
P_{\infty}:=\prod_{i=1}^{\infty}P_{i}
\]
converges in $I^{m}$.
Then the entries {\cred \eqref{eqn:step1}, \eqref{eqn:step2}, \eqref{eqn:step3}, \eqref{eqn:step4}} in $\sigma(P_{\infty})^{-1} C P_{\infty}$ are {\cred zero}, as desired.

\subsection*{{\cred Step 2:} Eliminate other entries of $C$}\mbox{}\par

Here, we will eliminate other entries so that we find $C'$ as in Claim \ref{claim:affineC'}.
We will eliminate the following entries in that order.
\begin{eqnarray}
\label{eqn:step5}
&&(2n,2n),(2n-1,2n), \ldots, (n+1,2n),\\
\label{eqn:step6}
&&(2n-1, 2n-1), \ldots (n+1, 2n-1), (2n-1, 2n-2), 
 \ldots, (n+1,n+1), \\
\label{eqn:step7}
&&(n,1), \ldots, (2,1), \\
\label{eqn:step8}
&&(2,n+1), \ldots, (n-1, n+1), (2,n+2), \ldots, (n-2,n+2), \ldots, (2, 2n-2).
\end{eqnarray}

To eliminate (\ref{eqn:step5}), we use  \c{2n-1}{2n}, \ldots, \c{n+1}{2n}, \c{1}{2n} to eliminate the $(n+1,2n)$, $\ldots, (2n,2n)$-th entries. Here, we use that $r-(-r-k) >0$ to {\cred ensure} the existence of such elementary matrices in $I^{m}$.
The transformation \c{1}{2n} (resp.\,\c{n+i}{2n}) is together with \r{2n}{1} (resp.\,\c{1}{n+1-i}, \r{2n}{n+i}, and \r{n+1-i}{1}), which preserve the form as in (\ref{C2}).
Moreover, they clearly do not restore {\cred the entries we have set to zero}.
Note that, after eliminating (\ref{eqn:step5}), the $(n+1, 2), \ldots, (n+1, 2n)$-th entries {\cred are zero} since 
\[
\langle \textup{the $i$-th column}, \textup{the $2n$-th column} \rangle = 0  
\]
for $i = 2, \ldots, n$.

To eliminate (\ref{eqn:step6}), we use \c{1}{j} to eliminate the $(n+1,j)$-th entry, and \c{i-1}{j} to eliminate the $(i, j)$-th entries for $i \geq n+1$.
The transformation \c{1}{j} is together with \c{2n+1-j}{2n}, \r{j}{1}, and \r{2n}{2n+1-j}.
They do not affect {\cred the entries we have set to zero} since the $(i,2n+1-j)$-th entries are already {\cred zero} for $i\geq n+2$.
On the other hand, \c{i-1}{j} is together with \c{2n+1-j}{2n+2-i}, \r{j}{i-1}, and \r{2n+2-i}{2n+1-j}. Clearly, \c{2n+1-j}{2n+2-i} and \r{2n+2-i}{2n+1-j} {\cred do} not restore {\cred the entries we have set to zero}.
Moreover, \r{j}{i-1} does not restore {\cred the entries we have set to zero}, since the $(j,j), \ldots, (j,2n)$-th entries are already {\cred zero}.

To eliminate (\ref{eqn:step7}), we use \r{n+1}{i} to eliminate the $(i,1)$-th entries.
They are together with transformations \r{2n+1-i}{n}, \c{i}{n+1}, and \c{n}{2n+1-i}.
They clearly do not restore {\cred any of the entries we have set to zero} since the $(n+1, 2), \ldots, (n+1, 2n)$-th entries {\cred are already zero}.

To eliminate (\ref{eqn:step8}), we use \r{j+1}{i} to eliminate the $(i,j)$-th entries.
They are together with transformations \r{2n+1-i}{2n-j}, \c{i}{j+1}, and \c{2n-j}{2n+1-i}. 
The transformation \r{2n+1-i}{2n-j} affects only the $(2n-j,2n-i)$-th entry, which {\cred has not been set to zero yet} since $2n-i \geq j+1$ if $i+j+1 \neq 2n+1$.
The transformation \c{i}{j+1} affects only the $(1,j+1), \ldots, (i-1,j+1)$-th entries, which {\cred have not been set to zero yet}.
Similarly, the transformation \c{2n-j}{2n+1-i} affects only {\cred the entries} $(1, 2n+1-i), \ldots, (n, 2n+1-i)$, which {\cred have not been set to zero yet} since $2n+1-i \geq j+2$ as above.

It is clear that after eliminating entries (\ref{eqn:step1}), \ldots, (\ref{eqn:step8}), we obtain a matrix of desired form, i.e., we have constructed $P$ satisfying the assumption in Claim \ref{claim:affineC'}.
{\cred This} finishes the proof of the surjectivity of the lower map of the {\cred second} diagram.

For the upper map of the second diagram, the proof is similar to that for the first diagram in Subsection \ref{subsection:pfsemiinf}.
{\cred This completes the proof.}

\subsection{The relation $\sim_{b,m,r}$ and $\dot{\sim}_{b,m,r}$}
\label{subsection:relationbmr}
Recall that we write $\alpha_{v}$ for $\langle v, F^{n}(v)\rangle.$

\begin{proposition}
\label{proposition:bmr}
Assume $r+ k \geq m+1$.
Let $x,y \in V_{b}^{\mathrm{symp}}$.
The following are equivalent.
\begin{enumerate}
\renewcommand{\labelenumi}{(\roman{enumi})}
\item
$x\sim_{b,m,r}y$.
\item
$x \in g_{b,r}(y)
{}^t\!
\left(\begin{array}{cccc|cccc}
\co^{\times} & \p^{m}& \cdots &\p^{m}&\p^{m}&\cdots&\cdots&\p^{m}
\end{array}\right).
$
\end{enumerate}

Moreover, if we assume
$$\sigma(\alpha_{x})/\alpha_{x}\equiv1, \sigma(\alpha_{y})/\alpha_{y}\equiv1 \mod \p^{m+1}$$ and {\cred replace} $\co^{\times}$ with $1+\p^{m+1}$, the same statement for $\dot{\sim}_{b,m,r}$ is true.
\end{proposition}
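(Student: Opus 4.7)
The plan is to reformulate both conditions in terms of the matrix $M := g_{b,r}(y)^{-1}g_{b,r}(x) \in \GSp_{2n}(\breve{K})$. By the definition of $\sim_{b,m,r}$ (resp.\ $\dot{\sim}_{b,m,r}$), condition (i) is equivalent to $M \in I^{m}$ (resp.\ $M \in \dot{I}^{m}$). Since the first column of $g_{b,r}(x)$ is $x$ and the first column of $g_{b,r}(y)$ is $y$, reading off the first column of $g_{b,r}(x) = g_{b,r}(y) M$ gives
\[
x \;=\; g_{b,r}(y)\cdot(Me_{1}),
\]
and the first column of $I^{m}$ (resp.\ $\dot{I}^{m}$) is exactly ${}^{t}(\co^{\times},\p^{m},\ldots,\p^{m})$ (resp.\ ${}^{t}(1+\p^{m+1},\p^{m},\ldots,\p^{m})$). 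The implication (i)$\Rightarrow$(ii) is therefore immediate, and conversely (ii) is precisely the statement that $Me_{1}$ has the shape required of the first column of $I^{m}$ (resp.\ $\dot{I}^{m}$).

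For (ii)$\Rightarrow$(i), I would reconstruct the remaining columns of $M$ from its first column $m := Me_{1}$. Writing $x = g_{b,r}(y) m$, applying $F^{i}$ to both sides, and invoking Lemma \ref{lemma:order} together with $\sigma(w_{r})=w_{r}$ yields
\[
\varpi^{ir} F^{i}(x) \;=\; g_{b,r}(y)\cdot\varpi^{ir}\prod_{j=0}^{i-1}\sigma^{j}(w_{r}A_{b,r})\,\sigma^{i}(m),
\]
which identifies the $(i+1)$-st column of $M$ for $1 \le i \le n-1$ and the $2n$-th column for $i=n$. For the middle columns $\varpi^{ir}G_{i}(x)$, the recursive definition of $G_{i}$ in terms of $V_{k}=\varpi^{k}F^{-1}$, together with the backward identity $F^{-1}(g_{b,r}(y) m) = g_{b,r}(y)\cdot (\sigma^{-1}(A_{b,r}))^{-1} w_{r}^{-1}\sigma^{-1}(m)$ derived from Lemma \ref{lemma:order}, similarly writes each $G_{i}(x)$ as $g_{b,r}(y)$ applied to an explicit vector.

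What remains is then $\varpi$-order bookkeeping. The order bounds $\ord a_{i},\ord b_{i} \ge i(r+k/2)$ from Lemma \ref{lemma:order}, the $\varpi^{-r}$ (lower-left) and $\varpi^{r+k}$ (upper-right) entries of $w_{r}$, the assumption $r+k\ge m+1$, and the starting estimate $m_{j}\in \p^{m}$ for $j\ge 2$ combine to send all strictly-upper entries of $M$ into $\p^{m+1}$ and all strictly-lower entries into $\p^{m}$, matching the shape of $I^{m}$. A conceptual way to package many of these estimates is through Remark \ref{remark:lattice}: the lattice $\la(v)=\mathcal{D}_{k}v$ is intrinsic to $v$, stable under $F$ and $V_{k}$, and self-dual up to scalar, so (ii) forces $\la(x)\subset\la(y)$, which already encodes most of the required containments. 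For the $\dot{\sim}_{b,m,r}$-variant one additionally needs every diagonal entry of $M$ to lie in $1+\p^{m+1}$; this follows because the diagonal of $A_{b,r}$ consists of $1$'s and of $\sigma(\alpha_{y})/\alpha_{y}\in 1+\p^{m+1}$, and because $m_{1}\in 1+\p^{m+1}$ by the $\dot{\sim}$-version of (ii), so all diagonal contributions through the iterated product above remain in $1+\p^{m+1}$.

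The main obstacle is the order bookkeeping for the $G_{i}$-columns: the Gram--Schmidt correction $\langle V_{k}(\,\cdot\,),F^{n}(v)\rangle/\alpha_{v}$ in the recursive definition of $G_{i}$ mixes the ``$F$-flag'' and the ``$V_{k}$-flag'' of $g_{b,r}(y)$, producing many cross-terms whose $\varpi$-orders must be tracked individually, and it is exactly here that the condition $r+k\ge m+1$ is used sharply several times to absorb every correction term into the required power of $\p$.
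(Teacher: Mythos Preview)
Your proposal is correct and follows essentially the same route as the paper's proof: the paper also notes (i)$\Rightarrow$(ii) is trivial, and for (ii)$\Rightarrow$(i) it establishes the two identities $\varpi^{r}F(g_{b,r}(y))=g_{b,r}(y)A$ and $\varpi^{r}V_{k}(g_{b,r}(y))=g_{b,r}(y)B$ (your forward and backward identities, rewritten from Lemma~\ref{lemma:order}), then applies them iteratively to the assumption (ii) to show that each column of $g_{b,r}(x)$ lands in $g_{b,r}(y)$ times the appropriate column of $I^{m}$, with the order bookkeeping for the $G_{i}$-columns handled via the recursion $G_{i}(v)=\frac{\alpha_{v}}{\sigma^{-1}(\alpha_{v})}\varpi^{k}F^{-1}(G_{i-1}(v))+c_{i}v$ and the bound $\ord c_{i}\ge ki/2$. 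The only point where your outline diverges is the appeal to Remark~\ref{remark:lattice} as a shortcut; the paper does not use this, and in fact the lattice containment $\mathcal{L}(x)\subset\mathcal{L}(y)$ alone would only give the $I^{0}$-shape, not the finer $I^{m}$-congruences, so that remark is suggestive but does not actually replace any of the column-by-column estimates.
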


\begin{proof}
First, by Lemma \ref{lemma:order}, we can show the following (in this part, we do not need the assumption $r+k \geq m+1$).
\begin{itemize}
\item
We have
\begin{equation}
\label{eq:F1}
\varpi^{r}F(g_{b,r}(y)) = g_{b,r}(y)A,
\end{equation}
where
\[
A = 
\left(\begin{array}{cccc|ccccc}
&&&&(-1)^{n+1}\frac{\sigma(\alpha_{y})}{\alpha_{y}}\varpi^{2r+k}&&&&\\
1&&&&b_{1}&b_{2}&\cdots&b_{n-1}&a_{n}\\
&\ddots&&&&&&&\vdots\\
&&1&&&&&&a_{2}\\\hline
&&&&&\frac{\sigma(\alpha_{y})}{\alpha_{y}}\varpi^{2r+k}&&&\\
&&&&&&\ddots&&\\
&&&&&&&\ddots&\\
&&&&&&&&\frac{\sigma(\alpha_{y})}{\alpha_{y}}\varpi^{2r+k}\\
&&&1&&&&&a_{1}
\end{array}\right).
\]
Here, we use the same notation as in Lemma \ref{lemma:order}.
\item
We have
\begin{equation}
\label{eq:F2}
\varpi^{r} V_{k}(g_{b,r}(y)) = g_{b,r} (y)B,
\end{equation}
where
\begin{align*}
B&= \diag (1, \ldots, 1,\frac{\sigma^{-1}(\alpha_{y})}{\alpha_{y}}, \ldots, \frac{\sigma^{-1}(\alpha_{y})}{\alpha_{y}})\\
& \times
\left(\begin{array}{cccc|ccccc}
b_{1}'&\varpi^{2r+k}&&&b_{2}'&\cdots&b_{n-1}'&a_{n}'&\\
&&\ddots&&&&&\vdots&\\
&&&\varpi^{2r+k}&&&&\vdots&\\
&&&&&&&a_{1}'&\varpi^{2r+k}\\\hline
(-1)^{n+1} &&&&&&&&\\
&&&&1&&&&\\
&&&&&\ddots&&&\\
&&&&&&\ddots&&\\
&&&&&&&1 &
\end{array}\right).
\end{align*}
Here, as above, $a_{i}', b_{j}' \in \breve{K}$ satisfy $\ord a_{i}' \geq ir + \frac{k}{2}i$ and $\ord b_{j}' \geq jr + \frac{k}{2}j$.
\end{itemize}
Note that {\cred the implication} (i) $\Rightarrow$ (ii) is trivial.
Therefore, we will show (ii) $\Rightarrow$ (i).
We need to show the inclusion
\begin{equation}
\label{eq:desincl}
g_{b,r}(x) \in g_{b,r}(y)  I^{m}.
\end{equation}
First, we will show the $1$st, $\ldots$, $n$-th columns of (\ref{eq:desincl}).
Clearly, the $1$st column of (\ref{eq:desincl}) is no other than the assumption (ii).
Applying $\varpi^{r} F$ and using (\ref{eq:F1}) to the assumption (ii),
we have
\[
x \in g_{b,r}(y)
{}^t\!
\left(\begin{array}{ccccc|ccccc}
\p^{m+1}  & \co^{\times}
&\p^{m}
& \cdots &\p^{m}&
\p^{m}& \cdots
&\cdots&\cdots&\p^{m}
\end{array}\right),
\]
which implies the $2$nd column of (\ref{eq:desincl}).
Here, we use $2r+k \geq \lceil r+k \rceil \geq 1$.
Similarly, by applying $\varpi^{r} F$ and (\ref{eq:F1})  to the assumption (ii) $(i-1)$ times, we can verify the $i$-th column of (\ref{eq:desincl}) ($1 \leq i \leq n$). Here, we use that $\ord b_{i} \geq m+1$, which follows from the assumption $r+k \geq m+1$.

Next, we will verify the $2n$-th column of (\ref{eq:desincl}).
Since we have
\[
\varpi^{(n-1)r} F^{n-1} (x)\in
g_{b,r}(y)
{}^t\!
\left(\begin{array}{cccc|cccc}
\p^{m+1} & \cdots
&\p^{m+1}
& \co^{\times} &\p^{m}&
\cdots& \cdots
&\p^{m}
\end{array}\right),
\]
applying $\varpi^{r} F$ and using $(\ref{eq:F1})$, we have
\begin{align*}
&\varpi^{nr} F^{n} (x) \\
&\in
g_{b,r}(y)
{}^t\!
\left(\begin{array}{cccc|cccc}
\p^{m+1} & \cdots
& \cdots
&\p^{m+1}
& \p^{m+1} &
\cdots& \p^{m+1}
&\co^{\times}
\end{array}\right),
\end{align*}
which implies the $2n$-th column of $(\ref{eq:desincl})$.

Finally, we will verify the $(n+1)$-th, $\ldots$ $(2n-1)$-th column of $(\ref{eq:desincl})$.
Note that, 
by {\cred applying} $F^{-1}$ {\cred to} (\ref{eq:F1}) {\cred with} $r=0$, we have
\begin{align}
\begin{aligned}
\label{eq:Giv}
G_{1} (v)&= 
(-1)^{n+1} \frac{\alpha_{v}}{\sigma^{-1}(\alpha_{v})} \varpi^{k}F^{-1}(v) + c_{1} v, \\
G_{i} (v)&=
\frac{\alpha_{v}}{\sigma^{-1}(\alpha_{v})} (\varpi^{k} F^{-1} (G_{i-1}(v))) + c_{i} v \quad(2\leq i \leq n-1),
\end{aligned}
\end{align}
where $c_{j} \in \breve{K}$ $(1\leq j \leq n-1)$ depend on $v$ and satisfy
\begin{equation}
\label{eq:ordc}
\ord c_{j} \geq \frac{k}{2}j.
\end{equation}
In the following, we put $v:=x$.
By applying $\varpi^{r}V_{k}$ and using $ (\ref{eq:F2})$ to the assumption (ii), and using $\lceil r+ \frac{k}{2} \rceil = r+k \geq m+1$,
we have
\[
\varpi^{r} V_{k} (x) \in
g_{b,r}(y)
{}^t\!
\left(\begin{array}{cccc|cccc}
\p^{m+1} & \p^{m+1}
& \cdots
&\p^{m+1}
& \co^{\times} &
\p^{m}
& \cdots
&\p^{m}
\end{array}\right).
\]
{\cred By \eqref{eq:Giv} and \eqref{eq:ordc}, together with the assumption,}
 we have
\begin{equation}
\label{eq:G1}
\begin{split}
&\varpi^{r}G_{1} (x) 
\in
g_{b,r}(y)
{}^t\!
\left(\begin{array}{cccc|cccc}
\p^{m+1} & \p^{m+1}
& \cdots
&\p^{m+1}
& \co^{\times} &
\p^{m}
& \cdots
&\p^{m}
\end{array}\right),
\end{split}
\end{equation}
which implies the $(n+1)$-th column of (\ref{eq:desincl}).
Applying $\varpi^{r} V_{k}$ and $ (\ref{eq:F2})$ to (\ref{eq:G1}) and using (\ref{eq:ordc}) repeatedly, we can show the $i$-th column of (\ref{eq:desincl}) for $n+2 \leq i \leq 2n-1$, and {\cred this} finishes the proof.
The assertion for $\dot{\sim}_{b,m,r}$ can be shown in the same way.
\end{proof}
\begin{remark}
Suppose that $r+k \geq m+1$.
By Proposition \ref{proposition:bmr}, 
for $x, y \in V_{b}^{\mathrm{symp}}$, we have
\[
x \sim_{b,m,r+1} y \Rightarrow x \sim_{b,m,r} y.
\]
(The same statement for $\dot{\sim}_{b,m,r}$ holds true.)
Therefore, by Theorem \ref{theorem:comparison}, we have morphisms of sets
\begin{align}
\label{eqn:transition}
\begin{aligned}
&X_{w_{r+1}}^{m} (b) (\overline{k}) \rightarrow X_{w_{r}}^{m} (b) (\overline{k}), \\
&\dot{X}_{w_{r+1}}^{m}(b)(\overline{k}) \rightarrow \dot{X}_{w_{r}}^{m}(b)(\overline{k}).
\end{aligned}
\end{align}
In Remark \ref{remark:morphismadlv}, we will show that these morphisms come from morphisms of schemes.
Moreover, we have an isomorphism of sets
\[
(\mathop{\varprojlim}\limits_{r>m}\dot{X}^{m}_{w_r}(b))(\overline{k})
\simeq X_{w}^{(U)}(b)
\]
by Theorem \ref{theorem:comparison}.
Indeed, for any $(\overline{v_{r,m}})_{r,m}$ in the left-hand side (where $v_{r,m} \in \Vsymp$), 
$v_{r,m}$ converges as an element of $\Vsymp$ as follows.
Fix $(r_{0}, m_{0})$.
Let $M$ be {\cred the} minimum of order of entries of $g_{b,r_{0}} (v_{r_{0},m_{0}})$.
Then, for any $(r_{1},m_{1})$ and $(r_{2}, m_{2})$ with $r_{i} > r_{j}$ and $m_{i} > m_{j}$ for $i>j$, we have
\begin{align*}
v_{r_{2}, m_{2}} &\in g_{b, r_{1}} (v_{r_{1}, m_{1}}) 
 {}^t\!
\left(\begin{array}{cccc|cccc}
1+ \p^{m_{1}+1} & \p^{m_{1}}& \cdots &\p^{m_{1}}&\p^{m_{1}}&\cdots&\cdots&\p^{m_{1}}
\end{array}\right)\\
&\subset  v_{r_{1}, m_{1}} + \p^{M + m_{1}} M_{2n}(\co).
\end{align*}
Therefore, they converge to an element $v \in V$.
We have $v \in \Vsymp$ since $\ord (\alpha_{v})$ is preserved under $\dot{\sim}_{b,m,r}$.
One can formulate {\cred a} similar description for $X_{w}^{(B)}(b)$ as an $\Z$-quotient of the inverse limit of $X_{w_{r}}^{m}(b)(\overline{k})$, but we omit {\cred it} here.
\end{remark}

\begin{remark}
\label{remark:morphismadlv}
Actually, we can prove that {\cred the} maps (\ref{eqn:transition}) {\cred are} induced by the transition map between perfect schemes,
\begin{align}
\label{eqn:schtransition}
\begin{aligned}
&X_{w_{r+1}}^{m} (b)^{\mathrm{perf}} \rightarrow X_{w_{r}}^{m} (b)^{\mathrm{perf}}, \\
&\dot{X}_{w_{r+1}}^{m}(b)^{\mathrm{perf}} \rightarrow \dot{X}_{w_{r}}^{m}(b)^{\mathrm{perf}}.
\end{aligned}
\end{align}
To this end, we should introduce a functorial variant of Theorem \ref{theorem:comparison} and Proposition \ref{proposition:bmr}.
For {\cred a} perfect algebra $R$ over $\overline{k}$, we put $K_{R}= \mathbb{W}(R)[\frac{1}{\varpi}]$, and $V_{R} := \mathbb{W} (R)[\frac{1}{\varpi}]^{2n}$ with the symplectic form associated with $\Omega$ as in (\ref{eqn:Omega}).
Here, we put
\[
\mathbb{W} (R) :=
\begin{cases}
R \widehat{\otimes}_{\overline{k}} \co  &\textup{ if } \chara K=p,\\
W(R) \otimes_{W(\overline{k})} \co &\textup{ if } \chara K =0.
\end{cases}
\]
Then we can define 
\[
V_{b,R}^{\mathrm{symp}}
:=
\left\{
v \in V_{R} \left|
\langle v, F(v) \rangle = \cdots = \langle v, F^{n-1}(v) \rangle =0, \langle v, F^{n} (v) \rangle \in  \mathbb{W}(R) \frac{1}{\varpi}
\right\}\right..
\]
Also, we can define $g_{b,r}$ as before, and we can prove the analogue of Theorem \ref{theorem:comparison} in the following sense.
We define  $X^{m}_{r} (R)$ and $\dot{X}^{m}_{r} (R)$ by
\begin{align}
\begin{aligned}
X^{m}_{r} (R) := \{ 
g \in G(K_{R}) / I^{m} \mid g^{-1} b\sigma (g) \in I^{m} w_{r} I^{m}
\},\\
\dot{X}^{m}_{r} (R) :=
\{ 
g \in G(K_{R}) / \dot{I}^{m} \mid g^{-1} b\sigma (g) \in \dot{I}^{m} w_{r} \dot{I}^{m}
\}.
\end{aligned}
\end{align}
Then we can show that $g_{b,r}$ defines surjections
{\cred
$V_{b,R}^{\mathrm{symp}} \twoheadrightarrow X^{m}_r(R)$ and $V_{b,R}^{\mathrm{symp}} \twoheadrightarrow \dot{X}^{m}_r(R)$
}
if $r+k \geq m+1$.
Moreover, we can show the analogue of Proposition \ref{proposition:bmr} {\cred and hence}
we have transition maps
\begin{align}
\label{eqn:transitionR}
\begin{aligned}
&X^{m}_{r+1} (R)  \rightarrow X^{m}_{r} (R), \\
&\dot{X}^{m}_{r+1} (R)\rightarrow \dot{X}^{m}_{r} (R).
\end{aligned}
\end{align}
Finally, we will define the transition map (\ref{eqn:schtransition}).
For simplicity, we only define the former case.
Let $R$ be any perfect algebra over $\overline{k}$ as above.
For any $a \in X_{w_{r+1}}^{m} (b) (R)$, by \cite[Lemma 1.3.7]{Zhu2017} and \cite[proof of Lemma 1.3]{Zhu2017a}, there exists an \'{e}tale cover $R'$ such that $a |_{\Spec R'} \in X^{m}_{r+1} (R') \subset X_{w_{r+1}}^{m} (b)(R')$.
Then we can define $\iota (a|_{\Spec R'}) \in X^{m}_{r}(R')$ {\cred as the image under} (\ref{eqn:transitionR}).
By \'{e}tale descent, we can show that $\iota (a|_{\Spec R'})$ descends to an element in $X_{w_{r}}^{m} (b) (R)$.
Since this construction is functorial, it defines a morphism $X_{w_{r+1}}^{m}(b) \rightarrow X_{w_{r}}^{m} (b)$ as desired.
\end{remark}

\section{{\cred description} of {\cred the} connected components}
\label{section:conncomp}
In this section, we describe the connected components of semi-infinite (resp.\, affine) Deligne--Lusztig varieties by following the method of \cite{Viehmann2008a} (see also \cite{Viehmann2008}).
\subsection{Representatives of $b$}
\label{subsection:repb}
In the following, we define two kinds of {\cred representatives} of $b$ to describe the connected components of affine Deligne--Lusztig varieties.
We put 
$$
b_{0}
:=
\left(\begin{array}{cccc|cccc}
&&&&(-1)^{n+1}&&&\\
1&&&&&&&\\
&\ddots&&&&\\
&&1&&&\\\hline
&&&&&1&&\\
&&&&&&\ddots&\\
&&&&&&&1\\
&&&1&&&&
\end{array}\right), 
$$
and 
\[
A_{k}:=\diag(1,\varpi^{k}, \ldots, 1,\varpi^{k}).
\]
 {\cred Set $b: = b_{0} A_{k}$; we call this the Coxeter-type representative of $b$ with $\kappa (b) =k$.}
 Note that $\lambda(b) = - \varpi^{k}$ holds as before.
 
 On the other hand, we put
 
 $$
 \bsp
 :=
 \begin{cases}  
 \diag (1, \ldots, 1 | -1, \ldots, -1)
 & \textup{ if } k=0,  \\ 
 \vspace{0.2cm}
 \left(\begin{array}{ccccc}
 \left(
 \begin{array}{cc}
 0&\varpi \\
 1&0
 \end{array}
 \right)
&& \\
&\ddots & \\
&& 
\left(
 \begin{array}{cc}
 0&\varpi \\
 1&0
 \end{array}
 \right)
\end{array}\right) &\textup{ if } k=1,
\end{cases}
 $$
 and we call this representative the special-type representative of $b$ with $\kappa (b) =k$.

Let $\mathcal{A}^{\mathrm{red}}$ be the apartment of the reduced building of $\GSp_{2n}$ over $\breve{K}$ {\cred corresponding} to the maximal split torus {\cred consisting} of diagonal matrices in $\GSp_{2n}( \breve{K})$.
We can show that $b$ acts on $\mathcal{A}^{\mathrm{red}}$ {\cred and has a} unique fixed point $x$.
More precisely, $x$ is given by 

\[
\left\{
\begin{array}{ll}
\frac{1}{2} k{\alpha}^{\vee}_{2}+  \frac{1}{2} k{\alpha}^{\vee}_{3} + \cdots + \frac{m-1}{2}k{\alpha}^{\vee}_{n-2}+ \frac{m-1}{2}k{\alpha}^{\vee}_{n-1} +\frac{m}{2} k \beta^{\vee}  & \textup{if } n \textup{ is even},  \\
\frac{-1}{4}k( {\alpha}^{\vee}_{1} + {\alpha}^{\vee}_{3} + \cdots + \alpha_{n-2} + {\beta}^{\vee}) & \textup{if } n \textup{ is odd}.
\end{array}
\right.
\] 
Here, we put $m:=n/2$ if $n$ is even.
Moreover, ${\alpha}^{\vee}_{1} \ldots {\alpha}^{\vee}_{n-1}, \beta^{\vee}$ {\cred are} the usual simple coroots given by
\[
\begin{array}{cccccccccc}
&&&i&i+1&&2n+1-i&2n+2-i&&\\
\alpha^{\vee}_{i}:=&\diag(1,& \ldots,&t,&t^{-1},&\cdots &t,&t^{-1},&\cdots &1),
\end{array}
\]
\[
\begin{array}{ccccccc}
&&&n&n+1&&\\
\beta^{\vee}:=&\diag(1,& \ldots,&t,&t^{-1},&\cdots &1).
\end{array}
\]
We denote the corresponding compact open subgroup of $\GSp_{2n}(\breve{K})$ by $G_{x,0}$.
\begin{figure}
\centering
\captionsetup{justification=centering}
\[
\left(\begin{array}{cc|cc}
\co &\p & \co & \co \\
\co &\co & \p^{-1} & \co \\  \hline
\p &\p & \co &  \p\\
\co &\p & \co & \co\\
\end{array}\right), 
\left(\begin{array}{ccc|ccc}
\co &\p & \co & \p &\co & \p \\
\co &\co & \co & \co & \co &\co \\  
\co &\p & \co &  \p & \co & \p\\ \hline
\co &\co & \co & \co & \co &\co\\
\co &\p & \co &\p & \co & \p \\
\co &\co & \co & \co & \co& \co
\end{array}\right)
\]
\caption*{The shape of $G_{x,0}$ when $k=1$, for $2n=4, 6$}
\end{figure}

\subsection{Linear algebraic description of {\cred the} connected components}
In this subsection, we study the $J_{b}(K)$-action on $V^{\mathrm{symp}}_{b}$.
\begin{definition}
We put
\[
\mathcal{L}:=
\begin{cases}
\co e_{1} + \cdots +\co e_{n} + \p^{k} e_{n+1} +  \co e_{n+2} + \p^{k} e_{n+3}+ \cdots \p^{k} e_{2n-1} + \co e_{2n}  & \textup{if } n \textup{ is even,}\\
\co e_{1} + \cdots + \co e_{2n} & \textup{if } n \textup{ is odd,}
\end{cases} 
\]
where $e_{1}, \ldots, e_{2n}$ {\cred form} the standard basis of $V$.
We define
\[
\Lsymp :=
\{
v \in \Vsymp \cap \mathcal{L} \mid
\langle  v, F^{n}(v) \rangle \in \varpi^{\lfloor \frac{kn}{2} \rfloor} \co^{\times} 
\}.
\]
We also define $\Lsympsp$ in the same way.
Moreover, we define
\[
\la_{b}^{\mathrm{symp,rat}} :=
\{
v \in \Lsymp | \langle v, F^{n} (v) \rangle \in \varpi^{\lfloor \frac{kn}{2} \rfloor} \co_{K}^{\times}
\}.
\]
We also define $\Lsymprat$ in the same way.
\end{definition}

\begin{proposition}
\label{prop:component}
\begin{enumerate}
\item
We have the decomposition
\[
\Vsymp =
\bigsqcup_{j \in J_{b}(K)/ J_{b,0}} j \Lsymp.
\]
\item
We have the decomposition
\[
\Vsympsp =
\bigsqcup_{j \in J_{b_{\mathrm{sp}}}(K)/J_{b_{\mathrm{sp}},0}} j \Lsympsp.
\]
\item
There exists an element $g\in G_{x,0}$ such that 
\[
g^{-1} \bsp \sigma(g) = b.
\]
\end{enumerate}
Here, we put $J_{b,0}:=J_{b}(K) \cap G_{x,0}$ (resp.\,$J_{\bsp,0} :=J_{\bsp}(K) \cap G_{x,0}$).
\end{proposition}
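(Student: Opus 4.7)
The plan is to follow Viehmann's lattice-theoretic strategy in \cite{Viehmann2008a}, reducing (1) and (2) to orbits on $\mathcal{D}_k$-stable lattices via the assignment $v \mapsto \Lambda_v := \mathcal{D}_k v$ from Remark \ref{remark:lattice}, and deducing (3) from a Lang--Steinberg argument on the parahoric.

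The key preliminary is to recognize $\mathcal{L}$ as the standard self-dual (up to $\varpi^{\lfloor kn/2 \rfloor}$) $\mathcal{D}_k$-stable $\co$-lattice attached to the point $x \in \mathcal{A}^{\mathrm{red}}$. Its stabilizer in $G(\breve{K})$ is then $G_{x,0}$, so the stabilizer of $\mathcal{L}$ in $J_b(K)$ equals $J_{b,0}$. Next I would establish the set-theoretic identity
\[
\Lsymp = \{v \in \Vsymp \mid \Lambda_v = \mathcal{L}\}.
\]
This amounts to computing the Gram matrix of $\langle\,,\,\rangle$ on the basis $\{F^i(v), G_j(v), F^n(v)\}$ of $\Lambda_v$ using (\ref{eq:FiFj}), (\ref{eqn:Grep1}), (\ref{eqn:Grep2}); its entries are determined up to units by $\alpha_v$ and coincide with the Gram matrix of the standard basis of $\mathcal{L}$ precisely when $\alpha_v \in \varpi^{\lfloor kn/2 \rfloor} \co^\times$.

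Granting this identification, (1) is essentially formal. Every $j \in J_b(K)$ satisfies $jF = Fj$ and $jV_k = V_k j$, so $\Lambda_{jv} = j\Lambda_v$; therefore $j\Lsymp \cap \Lsymp \neq \emptyset$ forces $j\mathcal{L} = \mathcal{L}$, giving $j \in J_b(K) \cap G_{x,0} = J_{b,0}$, hence disjointness. For the covering, $J_b(K)$ acts transitively on the set of self-dual $\mathcal{D}_k$-stable lattices of the same volume as $\mathcal{L}$ (this is the transitivity of $J_b(K)$ on the associated orbit of vertices in the building of $J_b$ via Bruhat--Tits), so any $j$ with $j\Lambda_v = \mathcal{L}$ sends $v$ into $\Lsymp$. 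The proof of (2) is word-for-word the same with $\bsp$ in place of $b$. For (3), both $b$ and $\bsp$ fix $x$, so the twisted Frobenii $\mathrm{Int}(b)\circ\sigma$ and $\mathrm{Int}(\bsp)\circ\sigma$ preserve the smooth parahoric group scheme $\mathcal{G}_{x,0}$; since $b$ and $\bsp$ already lie in the same $\sigma$-conjugacy class in $G(\breve{K})$, a Lang--Steinberg argument on $\mathcal{G}_{x,0}$ (whose reductive quotient is connected over $\F_q$) produces the required $g \in G_{x,0}$ with $g^{-1}\bsp\sigma(g)=b$.

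The main obstacle will be the Gram-matrix calculation underpinning the identity $\Lsymp = \{v \mid \Lambda_v = \mathcal{L}\}$. The case $n$ even is the delicate one, because $\mathcal{L}$ is twisted by $\p^k$ in alternate positions, and this twist has to be tracked against the $\varpi$-adic valuations of the inductive formulas for $G_i(v)$ and the order estimates in Lemma \ref{lemma:order}. Once this linear-algebra step is secured, the remaining steps are standard Bruhat--Tits and Lang--Steinberg machinery.
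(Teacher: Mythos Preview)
Your disjointness argument is essentially what the paper does: if $v,\,jv\in\Lsymp$ then $g_b^{\mathrm{red}}(jv)=j\,g_b^{\mathrm{red}}(v)\in G_{x,0}$, forcing $j\in J_{b,0}$. The identification $\Lsymp=\{v\in\Vsymp\mid \Lambda_v=\mathcal{L}\}$ is a reasonable reformulation of this, though note that you must also check that $\mathcal{L}$ itself is $\mathcal{D}_k$-stable for both representatives $b$ and $\bsp$ and that the self-duality constants match up---this is implicit in the paper's verification that $g_b^{\mathrm{red}}(v)\in G_{x,0}$.

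The serious gap is in your covering argument. The sentence ``$J_b(K)$ acts transitively on the set of self-dual $\mathcal{D}_k$-stable lattices of the same volume as $\mathcal{L}$ (this is the transitivity of $J_b(K)$ on the associated orbit of vertices in the building of $J_b$)'' is circular: an orbit is transitive by definition, and the question is precisely whether all the lattices $\Lambda_v$ lie in a \emph{single} $J_b(K)$-orbit. This is not standard Bruhat--Tits machinery; it is the entire content of the proposition. The paper proves it only for the special representative $\bsp$ (and then deduces (1) and (3) from (2)), and the proof for $\bsp$ is substantial: one uses the block decomposition $V=V_0\oplus V_1$ (or $V_0\oplus V_{1/2}\oplus V_1$ when $n$ is odd and $k=1$), invokes the $\GL_n$ result of \cite[Lemma~6.11]{Chan2021} on the isotropic piece $V_0$, extends the resulting element to $J_{\bsp}(K)$ via an explicit block formula, and then needs two further nontrivial ingredients---a Viehmann-style claim that vectors with the same $p_0$-component and the same $\alpha$ lie in one $J_{\bsp}(K)$-orbit, and an existence lemma (Lemma~\ref{lemma:hogehoge}) constructing an element of $\Lsympsp$ with prescribed $V_0$-component by solving a system of $\sigma$-linear equations. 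None of this is bypassed by a building argument.

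Your Lang--Steinberg approach to (3) is genuinely different from the paper's, which instead deduces (3) from (2) by picking any $g$ with $g^{-1}\bsp\sigma(g)=b$, pushing a $v\in\Lsymp$ to $\Vsympsp$, correcting by some $j\in J_{\bsp}(K)$ via (2), and reading off $jg\in G_{x,0}$ from $g_{\bsp}^{\mathrm{red}}(jgv)\in G_{x,0}$. Your route can be made to work, but it is not free: you need to check that both $b$ and $\bsp$ lie in $G_{x,0}$ (so that the twisted Frobenii really act on the parahoric model), and then that $\bsp^{-1}b$ lies in the image of the Lang map for the pro-algebraic group $\mathcal{G}_{x,0}$, which requires the connectedness of the successive graded pieces of the Moy--Prasad filtration, not just of the reductive quotient. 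This is doable but is more than the one line you allot to it.
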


To prove this proposition, we need to define the reduced version of $g_{b,r}$.

\begin{definition}
For any $v\in V$, we put
\begin{align*}
g_{b}^{\mathrm{red}}(v) := g_{b,0}(v) \cdot \diag(1, \varpi^{\lceil \frac{-k}{2} \rceil}, \ldots, \varpi^{\lceil \frac{-k(i-1)}{2} \rceil}, \ldots, \varpi^{\lceil \frac{-k(n-1)}{2} \rceil}, \varpi^{\lceil \frac{-kn}{2} \rceil + \lfloor \frac{k(n-1)}{2} \rfloor}, \ldots, \varpi^{\lceil \frac{-kn}{2} \rceil}),
\\
g_{\bsp}^{\mathrm{red}}(v) := g_{\bsp,0}(v) \cdot \diag(1, \varpi^{\lceil \frac{-k}{2} \rceil}, \ldots, \varpi^{\lceil \frac{-k(i-1)}{2} \rceil}, \ldots, \varpi^{\lceil \frac{-k(n-1)}{2} \rceil}, \varpi^{\lceil \frac{-kn}{2} \rceil + \lfloor \frac{k(n-1)}{2} \rfloor}, \ldots, \varpi^{\lceil \frac{-kn}{2} \rceil}).
\end{align*}
\end{definition}
Then {\cred one checks} that $g_{b}^{\mathrm{red}}(v) \in G_{x,0}$ (resp.\,$g_{\bsp}^{\mathrm{red}}(v) \in G_{x,0}$) for any $v \in \Lsymp$ (resp.\,$v\in \Lsympsp$).

\begin{proof}
First, we will give a short remark.
Let $v \in \Lsymp$ 
and $j \in J_{b} (K)$. 
We suppose that $jv \in \Lsymp$.
Then we have
\[
g_{b}^{\mathrm{red}}(jv) = j g_{b}^{\mathrm{red}} (v) \in G_{x,0} 
\]
and thus we have $j \in G_{x,0}.$
Therefore, to prove $(1)$ of this proposition,
we need to show that, for any $v \in \Vsymp$, there exists $j \in J_{b} (K)$ such that $jv \in \Lsymp$.
The same argument {\cred works for $\bsp$}.

Next, we deduce $(1)$ and $(3)$ from $(2)$.
Suppose that $(2)$ holds true.
Let $g \in \GSp(\breve{K})$ be an element such that
\[
g^{-1} \bsp \sigma(g) =b.
\]
Take $v \in \Lsymp$.
Then we have $gv \in \Vsympsp$.
By $(2)$, there exists $j \in J_{\bsp}(K)$ such that
\[
jg(v) \in \Lsympsp.
\]
Then we have
\[
g_{\bsp}^{\mathrm{red}} (jgv) = j g_{\bsp}^{\mathrm{red}} (gv) =jg g_{b}^{\mathrm{red}}(v) \in G_{x,0}.
\]
Now we have 
$
jg \in G_{x,0}
$
and $(jg)^{-1} \bsp \sigma(jg) = {\cred b}$. {\cred This} finishes the proof of $(3)$.

By using $(3)$ and $(2)$, we can easily show $(1)$.

Therefore, it suffices to show $(2)$.
In the following, we denote $\bsp \sigma$ by $F$.
First, we will consider the case where $n$ is even.
We put $n:=2m.$
For any $v \in \Vsympsp$, by the $\GL_{n}$-case (\cite[Lemma 6.11]{Chan2021}), there exists a $j_{0} \in J_{b_{\mathrm{sp},0}}^{(\GL_{n})}$ such that
\[
j_{0} p_{0} (v) \in \mathcal{L}_{b_{\mathrm{sp},0}}^{\mathrm{adm}}.
\]
Here, we consider the decomposition
$V = V_{0} \oplus V_{1}$,
where $V_{0}$ is a vector space spanned by $e_{1}, \ldots e_{n}$, and $V_{1}$ is a {\cred vector} space {\cred spanned} by $e_{n+1}, \ldots, e_{2n}$.
We denote the projection from $V$ to $V_{i}$ by $p_{i}$.
Moreover, $b_{\mathrm{sp},i}$ is the restriction $b_{\mathrm{sp}}|_{V_{i}}$, and
we denote the corresponding inner forms by $J_{b_{\mathrm{sp},i}}^{(\GL_{n})}$.
For the definition of $\la_{b_{\mathrm{sp},0}}^{\mathrm{adm}}$, see \cite[Definition 6.10]{Chan2021}.
Then we can show that 
\[
j
:=
\left(\begin{array}{c|c}
j_{0} & \\ \hline
& \omega ^{t}j_{0}^{-1} \omega
\end{array}\right)
\]
is contained in $J_{\bsp}(K)$. Here, we put
\[
\omega :=
\left(\begin{array}{ccccc}
&&&&1\\
&&&-1&\\
&&\iddots&&\\
&1&&&\\
-1&&&&
\end{array}\right),
\]
which is the $n \times n $ matrix.
By multiplying $j$ by some power of $\diag(1,\ldots,1, \varpi, \ldots, \varpi)$ from the left, we have the following.
\begin{itemize}
\item
$j \in J_{\bsp} (K).$
\item
$p_{0}(jv) \in  \mathcal{L}_{b_{\mathrm{sp},0}}^{\mathrm{adm}}.$
\item
$\langle jv, F^{n}(jv) \rangle \in \varpi^{m} \co^{\times}.$
\end{itemize}

By Lemma \ref{lemma:hogehoge}, it suffices to show the following claim.

\begin{claim}
\label{claim:findj}
Suppose $n$ is even as above.
Let  $v, \widetilde{v} \in \Vsympsp$ be elements satisfying the following.
\begin{enumerate}
\item
$p_{0}(v) = p_{0} (\widetilde{v})$.
\item
$\langle v, (\bsp \sigma)^{n} (v) \rangle = \langle \widetilde{v}, (\bsp \sigma)^{n} \widetilde{v} \rangle$.
\end{enumerate}
Then there exists $j \in J_{{\cred \bsp}}(K)$ such that $jv = \widetilde{v}$.
\end{claim}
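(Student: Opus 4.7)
The plan is to construct $j \in J_{\bsp}(K)$ as a block matrix with respect to the decomposition $V = V_0 \oplus V_1$, adapting the $\GL_n$ transitivity result \cite[Lemma 6.11]{Chan2021} (already invoked for $V_0$ in the outer argument) to the full symplectic space. Since $n = 2m$, the representative $\bsp$ preserves both $V_0$ and $V_1$ (the $2 \times 2$ blocks of $\bsp$ lie entirely within each), so $F = \bsp\sigma$ is block-diagonal, and $\Omega$ restricts to a non-degenerate perfect pairing between the Lagrangian subspaces $V_0$ and $V_1$. Consequently every element of $J_{\bsp}(K)$ has the form $\bigl(\begin{smallmatrix} A & B \\ C & D \end{smallmatrix}\bigr)$ with $A \in J_{\bsp|_{V_0}}^{(\GL_n)}(K)$, $D \in J_{\bsp|_{V_1}}^{(\GL_n)}(K)$, and $B, C$ $F$-equivariant intertwiners, plus a $\GSp$-compatibility coupling all four blocks.

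Given hypothesis (1), $p_0(v) = p_0(\widetilde{v})$, it is natural to seek $j$ with $A = \id_{V_0}$ and $B = 0$; then the relation $jv = \widetilde{v}$ becomes $C p_0(v) + D p_1(v) = p_1(\widetilde{v})$. By \cite[Lemma 6.11]{Chan2021} applied to the $\GL_n$-isocrystal $V_1$, we may select $D \in J_{\bsp|_{V_1}}^{(\GL_n)}(K)$ such that $Dp_1(v)$ differs from $p_1(\widetilde{v})$ by a vector lying in the image of some $F$-equivariant intertwiner $C : V_0 \to V_1$ evaluated at $p_0(v)$, and then solve for $C$ within the finite-dimensional $K$-vector space of such intertwiners. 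Rationality over $K$ (i.e., descent to a $J_{\bsp}(K)$-point) is built into the requirement that each of $A, B, C, D$ be $F$-equivariant.

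The main obstacle will be the $\GSp$-condition $j^{T}\Omega j = \lambda(j)\Omega$, which with the choice $A = \id$, $B = 0$ becomes a coupled equation between $C$, $D$, and the pairings $\langle v, F^i v\rangle$. The vanishings $\langle v, F^i v\rangle = 0$ for $0 \leq i \leq n-1$ (from $v \in \Vsympsp$) kill most obstructions, and hypothesis (2), $\alpha_v = \alpha_{\widetilde{v}}$, supplies exactly the invariant needed so that the candidate $(C, D)$ actually sits in $\GSp$ rather than merely in $\GL_{2n}$. The higher pairings $\langle v, F^{n+s}v\rangle$ for $s \geq 1$, which are not given to match a priori, should be rigidified via the $F$-equivariance of $C$ and $D$: propagating an equality of pairings by the isocrystal symmetry $\langle Fx, Fy\rangle = -\varpi^{k}\sigma\langle x, y\rangle$ collapses all these constraints to (1) and (2). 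I anticipate that the most delicate step is verifying this propagation, since it requires a careful count of intertwiners $V_0 \to V_1$ against the symplectic constraints.
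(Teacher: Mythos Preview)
Your block-triangular ansatz $j=\bigl(\begin{smallmatrix}\mathrm{id}&0\\ C&D\end{smallmatrix}\bigr)$ with respect to $V=V_0\oplus V_1$ is the right shape, and indeed is exactly what the paper does. But two substantive gaps remain.

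First, there is no freedom in $D$. With $A=\mathrm{id}_{V_0}$ and $B=0$, the $\GSp$-condition $j^{T}\Omega j=\lambda(j)\Omega$ on a block-lower-triangular matrix (with $V_0,V_1$ Lagrangian) forces $D=\lambda(j)\cdot\mathrm{id}_{V_1}$; and since $\alpha_{\widetilde v}=\alpha_{jv}=\lambda(j)\alpha_v$, hypothesis (2) then pins $\lambda(j)=1$, so $D=\mathrm{id}$. Your proposed use of \cite[Lemma~6.11]{Chan2021} on $V_1$ to ``select $D$'' is therefore a red herring: once you commit to $A=\mathrm{id}$, $B=0$, the only candidate is $D=\mathrm{id}$ and the whole problem reduces to producing an $F$-equivariant $C\colon V_0\to V_1$ with $C v_0=\widetilde v_1-v_1$ (where $v_0=p_0(v)$, $v_1=p_1(v)$, $\widetilde v_1=p_1(\widetilde v)$).

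Second, the existence of such a $C$ is not routine linear algebra: since $v_0$ generates $V_0$ over $\mathcal{D}_k$, an $F$-equivariant $C$ with $Cv_0=\widetilde v_1-v_1$ exists if and only if $\Ann_{\mathcal{D}_k}(v_0)\cdot(\widetilde v_1-v_1)=0$. This annihilator condition is the heart of the argument and is what your plan does not supply. The paper's mechanism is a single direct computation: for $1\le i\le n$ one checks
\[
\bigl\langle v_0+\widetilde v_1-v_1,\ (\bsp\sigma)^i(v_0+\widetilde v_1-v_1)\bigr\rangle
=\langle \widetilde v,(\bsp\sigma)^i\widetilde v\rangle-\langle v,(\bsp\sigma)^i v\rangle=0
\]
(using that $V_0,V_1$ are Lagrangian and hypotheses (1),(2)); by a dimension count this vanishing extends to all $\phi\in\mathcal{D}_k$. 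Pairing this against $\phi A(\widetilde v_1-v_1)$ for $A\in\Ann(v_0)$ and using that $v_0,\ldots,F^{n-1}v_0$ are a basis of $V_0$ gives $A(\widetilde v_1-v_1)=0$, so $C$ is well-defined; and the very same vanishing identity then shows $j$ preserves $\langle\,,\,\rangle$. Your ``propagation by the isocrystal symmetry'' is pointed in the right direction, but the actual content is this one computation, which simultaneously handles well-definedness of $C$ and the symplectic condition.
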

Here we give the proof of Claim \ref{claim:findj} (this is essentially the same as in the proof of \cite[Lemma 5.1]{Viehmann2008a}). 
We put $v_{0}:= p_{0}(v) = p_{0}(\widetilde{v})$, $v_{1}:=p_{1}(v)$, and $\widetilde{v}_{1}:= p_{1}(\widetilde{v})$.
Then by the assumption, we have
\begin{align*}
 \langle v_{0} + \widetilde{v}_{1} - v_{1}, (\bsp\sigma)^{i} (v_{0}+\widetilde{v}_{1}-v_{1})  \rangle 
&= \langle v_{0}, (\bsp \sigma)^{i}(\widetilde{v}_{1}-v_{1}) \rangle + \langle \widetilde{v}_{1}-v_{1}, (\bsp \sigma)^{i} (v_{0}) \rangle \\
& = \langle \widetilde{v}, (b_{\mathrm{sp}} \sigma)^{i} (\widetilde{v}) \rangle - \langle v, (b_{\mathrm{sp}} \sigma)^{i} (v) \rangle
= 0
\end{align*}
for $i=1, \ldots, n$.
Therefore, for any $\phi \in \mathcal{D}_{k}$, we have
\begin{equation}
\label{eq:phi}
\langle v_{0} + \widetilde{v}_{1} - v_{1}, \phi (v_{0}+\widetilde{v}_{1}-v_{1})  \rangle =0.
\end{equation}
Note that, {\cred for dimensional reasons},
$\phi (v_{0}+\widetilde{v}_{1}-v_{1})$ can be spanned by $ (\bsp\sigma)^{i} (v_{0}+\widetilde{v}_{1}-v_{1})$ for $-n+1 \leq i \leq n$.
For any $\mathcal{D}_{k} \supset \Ann v_{0} \ni A$ and any $\phi \in \mathcal{D}_{k}$, we have
\[
\langle v_{0}, \phi A (\widetilde{v}_{1}-v_{1}) \rangle =0.
\]
Since $v_{0} \in \mathcal{L}_{b_{\mathrm{sp},0}}^{\mathrm{adm}},$ we have
\[
A(\widetilde{v}_{1}-v_{1})=0.
\]
Now we define $j \in \GL (V)$ by
\begin{itemize}
\item
$j(v_{0}) := v_{0}+ \widetilde{v}_{1} - v_{1}$,
\item
$j(\phi(v_{0})) := \phi (j(v_{0}))$,
\item
$j|_{V_{1}} := \mathrm{id}$,
\end{itemize}
for any $\phi \in \mathcal{D}_{k}$.
By the above considerations, the above $j$ is well-defined.
By definition, $j$ commutes with $\bsp \sigma$.
Moreover, for any $\phi, \phi' \in \mathcal{D}_{k}$ and any $w_{1}, w_{1}' \in V_{1}$,
 we have
 \begin{eqnarray*}
&& \langle
 j(\phi(v_{0})+w_{1}), j(\phi'(v_{0})+w_{1}')
 \rangle  \\
 &=& \langle
 \phi(v_{0}+\widetilde{v}_{1}-v_{1})+w_{1}, \phi'(v_{0}+\widetilde{v}_{1}-v_{1}) +w_{1}'
 \rangle \\
 &=& \langle \phi(v_{0}), w_{1}' \rangle + \langle w_{1}, \phi'(v_{0}) \rangle \\
 &=& \langle \phi(v_{0}) + w_{1} , \phi'(v_{0})+ w_{1}' \rangle.
 \end{eqnarray*}
 Here, in the second equality, we use the equality (\ref{eq:phi}).
Therefore, we have $j \in J_{\bsp} (K)$. {\cred This} finishes the proof of Claim \ref{claim:findj}, and the proof of Proposition \ref{prop:component} in the case where $n$ is even.

Next, we will consider the case where $n$ is odd.
If $k=0$, then we can decompose the isocrystal $V$ as $V_{0} \oplus V_{1}$ as in the case where $n$ is even, and we can proceed with the same proof.
Therefore, we may assume that $k=1$. 
We put $n:= 2n'+1$.
In this case, we can decompose the isocrystal $V$ as
\[
V= V_{0} \oplus V_{\frac{1}{2}} \oplus V_{1},
\]
where $V_{0}$ is spanned by $e_{1}, \ldots e_{n-1},$ $V_{\frac{1}{2}}$ is spanned by $e_{n}, e_{n+1}$, and $V_{1}$ is spanned by $e_{n+2}, \ldots e_{2n}$.
As in the even case, we can find $j_{0} \in J_{b_{\mathrm{sp},0}}^{(\GL_{2{\cred n'}})}$ such that
\[
j_{0} p_{0} (v) \in \mathcal{L}_{b_{\mathrm{sp},0}}^{\mathrm{adm}}.
\]
Then we can show that
\[
j
:=
\left(\begin{array}{c|cc|c}
j_{0} &&& \\ \hline
&1&& \\
&&1& \\ \hline
& && \omega ^{t}j_{0}^{-1} \omega
\end{array}\right)
\]
is contained in $J_{\bsp} (K)$.

Moreover, by multiplying $j$ from the left by some power of 
\[
\left(\begin{array}{cccccccc}
1 & &&&&&& \\ 
& \ddots & &&&&& \\
&&1 &&&&& \\
&&&&-\varpi&&& \\
&&&1&&&&\\
&&&&& \varpi &&\\
&&&&&& \ddots & \\
&&&&&&& \varpi
\end{array}\right),
\]
we have the following.

\begin{itemize}
\item
$j \in J_{\bsp} (K).$
\item
$p_{0} (j v) \in \mathcal{L}_{b_{\mathrm{sp},0}}^{\mathrm{adm}}.$
\item
$\langle jv, F^{n} (jv) \rangle \in \varpi^{m} \co^{\times}.$
\end{itemize}
By \cite[Proposition 4.3]{Viehmann2008a}, we may further assume the following;
\[
A \mathcal{D}_{k} jv_{\frac{1}{2}} =\mathcal{D}_{k} A jv_{\frac{1}{2}},
\]
where $A \in \mathcal{D}_{k}$ is a generator of $\mathrm{Ann} (jv_{0})$ as in \cite[Lemma 2.6]{Viehmann2008a}.
By Lemma \ref{lemma:hogehoge}, it suffices to show the following claim.
\begin{claim}
\label{claim:findj2}
Let  $v, \widetilde{v} \in \Vsympsp$ be elements satisfying the following.
\begin{enumerate}
\item
$p_{0}(v) = p_{0} (\widetilde{v})$.
\item
$p_{\frac{1}{2}}(v) = p_{\frac{1}{2}}(\widetilde{v})$.
\end{enumerate}
Then there exists $j\in J_{{\cred \bsp}} (K)$ such that $jv = \widetilde{v}$
\end{claim}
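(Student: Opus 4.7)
The proof follows the template of Claim~\ref{claim:findj}, adapted to the three-summand decomposition $V = V_{0} \oplus V_{1/2} \oplus V_{1}$. I use the structural facts that $\bsp$ is block-diagonal with respect to this splitting (so each summand is $F$-stable), that $V_{0}$ and $V_{1}$ are each self-isotropic for $\langle\,,\,\rangle$ and pair non-degenerately with one another, that $V_{1/2}$ is a symplectic $2$-plane orthogonal to $V_{0} \oplus V_{1}$, and that the preparatory setup ensures $v_{0} \in \mathcal{L}_{b_{\mathrm{sp},0}}^{\mathrm{adm}}$ cyclically generates $V_{0}$ as a $\mathcal{D}_{k}$-module. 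Set $v_{0} := p_{0}(v) = p_{0}(\widetilde{v})$, $v_{1/2} := p_{1/2}(v) = p_{1/2}(\widetilde{v})$, $v_{1} := p_{1}(v)$, $\widetilde{v}_{1} := p_{1}(\widetilde{v})$.

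Define $j \in \GL(V)$ by $j(v_{0}) := v_{0} + \widetilde{v}_{1} - v_{1}$, $j(\phi v_{0}) := \phi(j v_{0})$ for $\phi \in \mathcal{D}_{k}$, and $j|_{V_{1/2} \oplus V_{1}} := \mathrm{id}$. Cyclic generation determines the extension on $V_{0}$, the identity $jv = \widetilde{v}$ is immediate from $v_{1/2} = \widetilde{v}_{1/2}$, and $F$-equivariance holds on each summand (on $\mathcal{D}_{k} v_{0}$ because $F \in \mathcal{D}_{k}$; on $V_{1/2} \oplus V_{1}$ because $F$ preserves each piece while $j$ is the identity there). Both the well-definedness on $V_{0}$ (i.e.\ $Av_{0} = 0 \Rightarrow A(\widetilde{v}_{1} - v_{1}) = 0$) and the preservation of $\langle\,,\,\rangle$ by $j$ reduce, exactly as in the even case, to the family of identities
\[
\langle u,\ \phi u \rangle = 0 \qquad \text{for every } \phi \in \mathcal{D}_{k}, \quad \text{where } u := v_{0} + \widetilde{v}_{1} - v_{1}.
\]

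The key observation is that $u$ lies in the subspace $V_{0} \oplus V_{1}$ of dimension $2n - 2$, so $\dim_{\breve{K}} \mathcal{D}_{k} u \le 2n - 2$ and it suffices to verify the identity for $\phi = F^{i}$ on any range of $2n - 2$ consecutive integers, say $-n + 2 \le i \le n - 1$. Expanding $\langle u, F^{i} u \rangle$ using $V_{0}, V_{1}$ isotropy and the agreement $v_{1/2} = \widetilde{v}_{1/2}$ (which causes every $V_{1/2}$ cross-term to cancel between $v$ and $\widetilde{v}$) gives $\langle u, F^{i} u \rangle = \langle \widetilde{v}, F^{i}(\widetilde{v}) \rangle - \langle v, F^{i}(v) \rangle$. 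For $1 \le i \le n - 1$ both terms vanish by $v, \widetilde{v} \in \Vsympsp$, $i = 0$ is trivial, and the negative range reduces to the positive one via the alternating property of $\langle\,,\,\rangle$ combined with $\sigma$-equivariance (since $\langle u, F^{-i} u \rangle = -\lambda(\bsp)^{-i} \sigma^{-i} \langle u, F^{i} u \rangle$).

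The step I expect to require the most care is the extraction of well-definedness from the identities: from $\langle u, \psi A u \rangle = 0$ for all $\psi \in \mathcal{D}_{k}$ and $A \in \Ann(v_{0})$ one obtains $\langle v_{0}, \psi A(\widetilde{v}_{1} - v_{1}) \rangle = 0$ for all $\psi$ (using $V_{1}$ self-isotropy and $A v_{0} = 0$), and then admissibility of $v_{0}$ together with non-degeneracy of the $V_{0}$-$V_{1}$ pairing forces $A(\widetilde{v}_{1} - v_{1}) = 0$. Notably no boundary input at $i = \pm n$ is required here, in contrast to Claim~\ref{claim:findj}, whose hypothesis (2) supplied exactly that missing identity in the even case; the match $p_{1/2}(v) = p_{1/2}(\widetilde{v})$ confines $u$ to a $(2n-2)$-dimensional subspace and thereby saves the boundary check. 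Once the family of identities is in hand, both the well-definedness of $j$ on $V_{0}$ and its symplectic-form preservation follow verbatim from the even case, placing $j$ in $J_{b}(K) = J_{\bsp}(K)$ and completing the proof.
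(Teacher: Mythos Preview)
Your proof is correct and follows the same approach as the paper, which simply states that Claim~\ref{claim:findj2} ``can be proved in the same way as in Claim~\ref{claim:findj}.'' You have in fact filled in the one nontrivial adaptation the paper leaves implicit: that $u = v_{0} + \widetilde{v}_{1} - v_{1}$ lies in the $(2n-2)$-dimensional $F$-stable subspace $V_{0}\oplus V_{1}$, so only $2n-2$ consecutive identities $\langle u, F^{i}u\rangle = 0$ are needed, which is why the matching hypothesis on $p_{1/2}$ replaces the boundary condition $\langle v, F^{n}v\rangle = \langle \widetilde{v}, F^{n}\widetilde{v}\rangle$ from Claim~\ref{claim:findj}.
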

This claim can be proved in the same way as in Claim \ref{claim:findj}.
\end{proof}

To complete the proof, we need the following lemma.
\begin{lemma}
\label{lemma:hogehoge}
We put $F:= \bsp \sigma$ as above.
\begin{enumerate}
\item
Suppose that $n$ is even, or $n$ is odd and $k=0$.
Let $v_{0} \in \mathcal{L}_{b_{\mathrm{sp},0}}^{\mathrm{adm}}$ and $c \in \varpi^{m} \co^{\times}$.
Then there exists $v \in \Lsympsp$ such that $p_{0}(v) = v_{0}$ and 
\[
\langle v, F^{n}(v) \rangle = c. 
\]
\item
Suppose that $n$ is odd and $k=1$.
Let $v_{0} \in \mathcal{L}_{b_{\mathrm{sp},0}}^{\mathrm{adm}}$ and $v_{\frac{1}{2}} \in \mathcal{L}_{b_{\mathrm{sp}},\frac{1}{2}}^{\mathrm{adm}}$.
We further suppose that 
\[
A \mathcal{D}_{{\cred k}} v_{\frac{1}{2}} = \mathcal{D}_{{\cred k}} A v_{\frac{1}{2}},
\]
where we use the same notation as in \cite[Proposition 4.3 (2)]{Viehmann2008a}.
Then there exists $v \in \Lsympsp$ such that $p_{0}(v)=v_{0}$ and $p_{\frac{1}{2}}(v) =v_{\frac{1}{2}}$.
\end{enumerate}
\end{lemma}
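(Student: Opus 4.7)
The plan is to handle both cases by explicit construction: in each case we are free to choose the components of $v$ not specified in the hypothesis, and the conditions defining $\Lsympsp$ cut out a non-empty affine set.

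For case (1), $F$ preserves the decomposition $V = V_0 \oplus V_1$ and both summands are isotropic, so writing $v = v_0 + v_1$ with $v_1 \in V_1$ unknown reduces the conditions $\langle v, F^i(v)\rangle = c\,\delta_{i,n}$ (for $1\le i \le n$) to
\[
\langle v_0, F^i(v_1)\rangle + \langle v_1, F^i(v_0)\rangle = c\,\delta_{i,n}.
\]
Using the compatibility $\langle F^i(x),y\rangle = \lambda(b)^i \sigma^i \langle x, F^{-i}(y)\rangle$ together with skew-symmetry, I would rewrite this as an $n \times n$ $\sigma$-semilinear system in the coordinates of $v_1$. The admissibility $v_0 \in \mathcal{L}_{b_{\mathrm{sp},0}}^{\mathrm{adm}}$ implies $\mathcal{D}_k v_0 = V_0$, so the $n$ linear functionals $\langle \cdot, F^i(v_0)\rangle$ on $V_1$ form a $\breve{K}$-basis of $V_1^*$; hence a unique solution $v_1 \in V_1$ exists.

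For case (2), write $v = v_0 + v_{1/2} + v_1$ where $v_0$ and $v_{1/2}$ are given and $v_1 \in V_1$ is to be determined. Expanding the symplectic conditions produces the same type of linear system on $v_1$ as in case (1), but shifted by the fixed contribution coming from $v_{1/2}$, namely terms of the form $\langle v_0, F^i(v_{1/2})\rangle + \langle v_{1/2}, F^i(v_0)\rangle + \langle v_{1/2}, F^i(v_{1/2})\rangle$. The hypothesis $A\mathcal{D}_k v_{1/2} = \mathcal{D}_k A v_{1/2}$, where $A$ generates $\mathrm{Ann}(v_0)$ as in \cite[Lemma 2.6]{Viehmann2008a}, is precisely what ensures this shift lies in the image of the linear map associated to case (1), so the system remains solvable; this is the same compatibility exploited in \cite[Proposition 4.3]{Viehmann2008a}.

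The main obstacle is the integrality step: upgrading the $\breve{K}$-solution $v_1$ to one satisfying $v \in \mathcal{L}$ and $\langle v, F^n(v)\rangle \in \varpi^{\lfloor kn/2\rfloor}\co^\times$. For this, I would express $v_1$ via Cramer's rule from the system above, and use that the admissibility of $v_0$ (and, in case (2), $v_{1/2}$) forces the coefficient matrix to have determinant of the expected valuation, since its entries are the pairings $\langle F^j(v_0), F^i(e_\ell)\rangle$ for a standard basis $\{e_\ell\}$ of $V_1$. Tracking these valuations against the explicit shape of $\bsp$ and of $\mathcal{L}$ in each parity case yields $v_1$ with the desired integrality, and the prescribed $\langle v, F^n(v)\rangle$ is then automatic by construction. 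This approach parallels the $\GL$-case treatment in \cite{Viehmann2008a}, the symplectic refinement being bookkeeping of the two Lagrangian halves.
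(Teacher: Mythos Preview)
Your setup is correct: writing $v=v_0+v_1$ (or $v_0+v_{1/2}+v_1$) and reducing to the pairings $\langle v,F^i(v)\rangle$ is exactly how the paper proceeds, and your observation that the functionals $\langle\,\cdot\,,F^i(v_0)\rangle$ give a basis of $V_1^*$ is also used there (this is what lets the paper pass from $v_1$ to the coordinates $\xi_i:=\langle v_1,F^i(v_0)\rangle$). The gap is in how you solve the resulting system and control integrality.

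The system is genuinely $\sigma$-semilinear: the term $\langle v_0,F^i(v_1)\rangle$ involves $\sigma^i$ of the coordinates of $v_1$, so the basis argument does \emph{not} yield a unique solution over $\breve K$, and Cramer's rule does not apply. The paper's key step, which your proposal is missing, is to rewrite $\langle v_0,F^i(v_1)\rangle=(-\varpi^k)^i\sigma^i\bigl(\sum_j q_{ij}\xi_j\bigr)$ where $F^{-i}(v_0)=\sum_j q_{ij}F^j(v_0)$; since $\mathrm{ord}\,q_{ij}\ge\tfrac{k}{2}(i+j)$ by the slope argument (Kedlaya), each equation takes the shape $\xi_i+(\text{strictly higher valuation in }\sigma^i(\xi_j))=\text{target}$. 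This is then solved by the successive-approximation method of \cite[Lemma 5.4]{Viehmann2008a}, which simultaneously produces the solution \emph{and} the required integrality bounds on the $\xi_i$ (hence on $v_1$). Your Cramer's-rule sketch cannot substitute for this, because it treats the system as linear.

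Separately, in case (2) you misidentify the role of the hypothesis $A\mathcal D_k v_{1/2}=\mathcal D_k A v_{1/2}$. It is not needed for solvability of the system (the system for $\xi_1,\ldots,\xi_{n-1}$ is solved exactly as above, with inhomogeneous term $-h_i=-\langle v_{1/2},F^i(v_{1/2})\rangle$). Rather, it is used \emph{after} finding $v\in\Vsympsp\cap\mathcal L$ to conclude $v\in\Lsympsp$, i.e.\ to get the correct valuation of $\langle v,F^n(v)\rangle$; the paper does this by invoking \cite[Proposition 4.3]{Viehmann2008a} together with the self-dual lattice description in Remark~\ref{remark:lattice}.
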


\begin{proof}
The proof is essentially done by \cite[Proposition 5.3]{Viehmann2008a}.
First, we will prove the case $(1)$.
For simplicity, we suppose that $k=1$ (the case where $k=0$ can be proved by a similar argument). We put $n':= \frac{n}{2}$.
Let $v_{1} \in V_{1}$ be an arbitrary element.
We put
\[
\xi_{i} := \langle v_{1}, F^{i}(v_{0}) \rangle
\]
for $i=1, \ldots, n$.
Then we have
\begin{eqnarray*}
(\xi_{1}, \ldots, \xi_{n})
\diag (1, 1, \varpi^{-1}, \varpi^{-1}, \ldots, \varpi^{-(n'-1)}, \varpi^{-(n'-1)})
(\sigma (g_{b_{\mathrm{sp},0}}^{\mathrm{red}}(v_{0})))^{-1} \\
= (a_{2n-1}, -\varpi a_{2n}, \ldots, a_{n+1}, - \varpi a_{n+2}),
\end{eqnarray*}
where $g_{b_{\mathrm{sp},0}}^{\mathrm{red}}$ is defined as in \cite[Subsection 6.3]{Chan2021}, and we put
\[
v = ^{t}\!(a_{1}, \ldots, a_{2n}). 
\]
Therefore, the vector $v_{1}$ is determined uniquely by $\xi_{1}, \ldots, \xi_{n}$.
We define $q_{ij} \in \breve{K}$ by
\[
F^{-i} (v_{0}) = \sum_{j=1}^{n} q_{ij} F^{j} (v_{0})
\]
for $i=1, \ldots, n$. 
We will find an element $v$ of the form $v_{0} + v_{1}$.
We note that 
\[
\langle v, F^{i} (v) \rangle = \langle v_{0}+v_{1}, F^{i} (v_{0}+v_{1}) \rangle
= \xi_{i} + (-1)^{i} \varpi^{i} \sigma^{i} (\langle F^{-i} (v_{0}), v_{1} \rangle ) = \xi_{i} - \sum_{j=1}^{n} (-1)^{i} \varpi^{i} \sigma^{i}(q_{ij}) \sigma^{i} (\xi_{j}).
\]
Therefore, to give an element $v \in \Lsympsp$ satisfying the condition in (1), it suffices to give $(\xi_{1}, \ldots \xi_{n})$ satisfying the following.
\begin{itemize}
\item
\[
\ord \xi_{1} \geq 1, \ord \xi_{2} \geq 1, \ord \xi_{3} \geq 2,  \ord \xi_{4} \geq 2, \ldots, \ord \xi_{n-1} \geq n', \ord \xi_{n} \geq n'.
\]
\item
\[
\xi_{i} - \sum_{j=1}^{n} (-1)^{i}  \varpi^{i} \sigma^{i}(q_{ij}) \sigma^{i} (\xi_{j})
\begin{cases}
= 0 & (i=1, \ldots, n-1), \\
=c  & (i=n).
\end{cases}
\]
\end{itemize}
Here, we note that we have $\ord q_{ij} \geq \frac{i+j}{2}$ by \cite[Lemma 5.2.4]{Kedlaya2005}.
We can find $(\xi_{1}, \ldots \xi_{n})$ as above by solving the equation by the same argument as  in \cite[Lemma 5.4]{Viehmann2008a}.

Next, we will prove (2).
We put $n':= \frac{n-1}{2}$. 
We note that it suffices to find $v \in \Vsympsp \cap \mathcal{L}$ such that $p_{0}(v) = v_{0}$ and $p_{\frac{1}{2}} (v) = v_{\frac{1}{2}}$, since we have $v \in \Lsympsp$ in this case by \cite[Lemma 4.3]{Viehmann2008a} and Remark \ref{remark:lattice}. 
We put
\[
\xi_{i} := \langle v_{1}, F^{i} (v_{0}) \rangle.
\]
We also put
\[
h_{i} := \langle v_{\frac{1}{2}}, F^{i} (v_{\frac{1}{2}}) \rangle
\]
for $i=1, \ldots n-1$.
As before, we can put
\[
F^{-i} (v_{0}) = \sum_{j=1}^{n-1} q_{ij} F^{i} (v_{0})
\]
with $ \ord q_{ij} \geq \frac{i+j}{2}$.
If we put $v= v_{0}+v_{\frac{1}{2}}+v_{1}$, then we have
\[
\langle v, F^{i} (v) \rangle = \xi_{i} - \sum_{j=1}^{n-1} (-1)^{i} \varpi^{i} \sigma^{i} (q_{ij}) \sigma^{i} (\xi_{j}) + h_{i}.
\]
Therefore, to give a desired element $v$, it suffices to give $(\xi_{1}, \ldots, \xi_{n-1})$ satisfying the following.
\begin{itemize}
\item
\[
\ord \xi_{1} \geq 0, \ord \xi_{2} \geq 1, \ord \xi_{3} \geq 1, \ord\xi_{4} \geq 2, \ldots, \ord \xi_{n-2} \geq n'-1, \ord \xi_{n-1} \geq n'.
\]
\item
\[
\xi_{i} - \sum_{j=1}^{n-1} \varpi^{i} \sigma^{i} (q_{ij}) \sigma^{i} (\xi_{j}) = - h_{i}.
\]
\end{itemize}
Since we have $\ord h_{i} \geq \frac{i}{2}$, we can solve this equation by the same argument as in \cite[Lemma 5.4]{Viehmann2008a}.
{\cred This} finishes the proof.
\end{proof}

In the following, we suppose that $b = b_{\mathrm{sp}}$ is the special representative (by Proposition \ref{prop:component} (3), it is not essential).
By Proposition \ref{prop:component}, we can describe a connected component of affine Deligne--Lusztig varieties.

\begin{definition}
\label{defn:component of adlv}
For any $j \in J_{b}(K)/ J_{b}(\co_{K})$, we put
\[
X^{m}_{w_{r}}(b)_{j (\mathcal{L})} := g_{b,r} (j(\Lsymp) / \sim_{b,m,r}),
\]
which is a subset of $X^{m}_{w_{r}}(b)(\overline{k})$.
Here, we use the same notation as in Theorem \ref{theorem:comparison}.
We will equip this subset with a scheme structure in Proposition \ref{proposition:openclosed}.
We also define
\[
\dot{X}^{m}_{w_{r}}(b)_{j (\mathcal{L})} := g_{b,r} (j(\Lsymprat) / \dot{\sim}_{b,m,r}),
\]
which is a subset of {\cred $\dot{X}^{m}_{w_{r}}(b) (\overline{k}).$}
\end{definition}

The following is an analogue of \cite[Proposition 6.12]{Chan2021}.

\begin{proposition}
\label{proposition:openclosed}
Assume that $r+k   \geq 1$.
For any $j\in J_{b}(K)$, the set $X^{0}_{w_{r}}(b)_{j (\mathcal{L})}$ is an open and closed subset of $X^{0}_{w_{r}}(b)(\overline{k})$.
Therefore, we can equip $X^{0}_{w_{r}}(b)_{j (\mathcal{L})}$ with the scheme structure such that 
\[
X^{0}_{w_{r}}(b) = \bigcup_{j \in J_{b}(K)/ J_{b}(\co_{K})} X^{0}_{w_{r}}(b)_{j (\mathcal{L})}
\]
is a scheme-theoretic disjoint union.
\end{proposition}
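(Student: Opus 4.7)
The plan is to adapt the proof of \cite[Proposition 6.12]{Chan2021} to the symplectic setting, using the parametrization of Theorem \ref{theorem:comparison} and the lattice decomposition of Proposition \ref{prop:component}. First, observe that by the $J_b(K)$-equivariance of the bijection in Theorem \ref{theorem:comparison} together with the partition $\Vsymp = \bigsqcup_{j} j\Lsymp$ from Proposition \ref{prop:component}(1), the subsets $X^0_{w_r}(b)_{j(\mathcal{L})}$ give a $J_b(K)$-equivariant set-theoretic disjoint decomposition of $X^0_{w_r}(b)(\overline{k})$ satisfying $X^0_{w_r}(b)_{j(\mathcal{L})} = j \cdot X^0_{w_r}(b)_{\mathcal{L}}$. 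Since $J_b(K)$ acts by scheme automorphisms, it suffices to show that $X^0_{w_r}(b)_{\mathcal{L}}$ is open in $X^0_{w_r}(b)$; pairwise disjointness then forces each $X^0_{w_r}(b)_{j(\mathcal{L})}$ to be both open and closed.

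For openness, I would attach to a point $gI^0 \in X^0_{w_r}(b)$ the $\co$-lattice $\la(gI^0) := \mathcal{D}_k \cdot v$, where $v \in \Vsymp$ is a representative with $g_{b,r}(v)I^0 = gI^0$. By Remark \ref{remark:lattice}, $\mathcal{D}_k v$ coincides with the $\co$-span of the columns of $g_{b,r}(v)$ (up to uniform scaling). By the characterization of $\sim_{b,0,r}$ in Proposition \ref{proposition:bmr}, any two such representatives $v, v'$ differ by an $\co^\times$-scaling of $v$ together with $\co$-integral combinations of the remaining columns of $g_{b,r}(v)$, so $\mathcal{D}_k v = \mathcal{D}_k v'$ as $\co$-lattices up to $\co^\times$. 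Hence $\la(gI^0)$ is well defined, and via Proposition \ref{prop:component} its $J_b(K)$-orbit class recovers the index $j$ of the piece containing $gI^0$.

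Next, the association $\la(\cdot)$ is locally constant on $X^0_{w_r}(b)$: any $I^0$-perturbation of $g$ corresponds, via Theorem \ref{theorem:comparison}, to a replacement of $v$ by a $\sim_{b,0,r}$-equivalent vector, which preserves $\mathcal{D}_k v$. Moreover, by Proposition \ref{prop:cell containment}, each $X^0_{w_r}(b)_{j(\mathcal{L})}$ is contained in a single affine Schubert cell, inside which the defining conditions for $\Lsymp$ — prescribed valuations $\ord \langle v, F^i(v) \rangle$ and integrality of entries of $g_{b,r}(v)$ — translate to a combination of open (non-vanishing) and closed (integrality at specified order) conditions on matrix entries, hence cut out a clopen subset.

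The main obstacle is the scheme-theoretic upgrade of the above $\overline{k}$-pointwise reasoning. This requires a functorial version of the parametrization $g_{b,r}$, as outlined in Remark \ref{remark:morphismadlv}: one proves via \'etale descent that any $gI^0 \in X^0_{w_r}(b)(R)$ is \'etale-locally of the form $g_{b,r}(v)I^0$ for some $v \in V_{b,R}^{\mathrm{symp}}$, so that the lattice-type invariant $j$ makes sense in families and is locally constant on connected components of the parameter scheme. Combined with the previous two paragraphs, this yields the scheme-theoretic disjoint decomposition $X^0_{w_r}(b) = \bigsqcup_j X^0_{w_r}(b)_{j(\mathcal{L})}$ into clopen pieces, endowing each $X^0_{w_r}(b)_{j(\mathcal{L})}$ with the desired open-and-closed subscheme structure.
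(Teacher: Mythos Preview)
Your overall strategy is the same as the paper's (both adapt \cite[Proposition 6.12]{Chan2021}), but the execution has two real gaps and misses the clean invariant the paper uses.

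First, the paper does not work with $\mathcal{D}_k v$. It uses the lattice $\mathcal{M}_0 := g_{b,r}(v)\la_0$ coming directly from the flag-variety point $gI^0$, and proves the explicit description
\[
X^0_{w_r}(b)_{\mathcal{L}} \;=\; \bigl\{(\mathcal{M}_i)_{i=0}^{2n-1}\in X^0_{w_r}(b)(\overline{k}) \;\big|\; \mathcal{M}_0\subset \mathcal{L}\ \text{and}\ \mathcal{M}_0^{\vee}=C^{-1}\mathcal{M}_0\bigr\}
\]
for a fixed constant $C$ depending only on $n,k,r$. Both conditions are manifestly well defined on $G(\breve K)/I$ (no representative $v$ needed), and together they are clopen in the affine flag variety by standard arguments; this is what reduces the claim to the argument of \cite[Proposition~6.12]{Chan2021}. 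Your claim that $\mathcal{D}_k v$ equals the $\co$-span of the columns of $g_{b,r}(v)$ ``up to uniform scaling'' is false: by Remark~\ref{remark:lattice} it is the column span of $g_{b,0}(v)$, and the two differ by the non-uniform diagonal $\diag(1,\varpi^r,\ldots,\varpi^{nr})$.

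Second, your ``local constancy'' paragraph only re-proves well-definedness: perturbing $g$ inside a single coset $gI^0$ does not move the point at all. Local constancy on the scheme requires comparing \emph{nearby} cosets, which is exactly what the lattice description above provides and what your argument does not. Your fallback to Proposition~\ref{prop:cell containment} is a forward reference and the sentence ``translate to a combination of open and closed conditions'' is doing all the work without justification; in particular, the valuation conditions on $\langle v,F^i(v)\rangle$ are not conditions on the Schubert-cell coordinates of $gI^0$ in any obvious way. The functorial upgrade in your last paragraph is unnecessary once one has the lattice characterization, since the conditions $\mathcal{M}_0\subset\mathcal{L}$ and $\mathcal{M}_0^\vee=C^{-1}\mathcal{M}_0$ are already scheme-theoretic on the affine flag variety.
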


\begin{proof}
We will show that there exists a constant $C$ which depends only {\cred on} $n,k,r$ such that
\begin{equation}
\label{eq:latticedescription}
X^{0}_{w_{r}}(b)_{\mathcal{L}}
=
\{
(\mathcal{M}_{i})^{2n-1}_{i=0} \in X^{0}_{w_{r}}(b)(\overline{k}) \mid \mathcal{M}_{0} \subset \mathcal{L} \textup{ and } \mathcal{M}_{0}^{\vee} = C^{-1} \mathcal{M}_{0} 
\}.
\end{equation}
More precisely, $C$ is defined by the following equation;
\[
C:= \varpi^{\ord \lambda (g_{b,r}(v)) - \ord \lambda( g_{b}^{\mathrm{red}}(v))},
\]
for some (any) $v \in \Vsymp$.  
We note that this definition of $C$ is equivalent to saying that 
\[
C= \varpi^{\ord \lambda( g_{b,r}(v) )} = \varpi^{\lfloor \frac{n}{2} \rfloor},
\]
for some (any) $v \in \Lsymp$.
We also note that, here, we regard {\cred an} element in 
\[
X^{0}_{w_{r}}(b)(\overline{k}) \subset G(\breve{K})/ I
\]
 as a lattice chain which is self-dual up to {\cred a} constant, via the correspondence
\[
g I \mapsto g(\la_{0}, \la_{1}, \ldots, \la_{2n-1}),
\]
where $\la_{0} \supset \cdots \supset \la_{2n-1}$ is a standard lattice chain, i.e., we put
\[
\la_{i} = \bigoplus_{1 \leq j \leq 2n-i} 
\co e_{j} \oplus 
\bigoplus_{2n-i+1 \leq j \leq 2n} 
\p e_{j}.
\]
We note that $\la_{0}$ is possibly different from $\la$ in contrast to \cite[Proposition 6.12]{Chan2021}.
{\cred We now prove} (\ref{eq:latticedescription}).
First, we take an element $g_{b,r} (v) I$ ($v \in \Lsymp$) from the left-hand side.
Then the corresponding lattice $\mathcal{M}_{0}$ is generated by $w_{1}, \ldots, w_{2n}$, where $w_{i}$ is the $i$-th column vector of $g_{b,r} (v)$.
Therefore, by definition of $g_{b,r}$ and $\Lsymp$, we have 
\[
\mathcal{M}_{0} \subset \mathcal{L}.
\]
Moreover, by definition of $C$ and the fact that $\ord( \lambda (g_{b}^{\mathrm{red}}(v)) = 0$ for $v \in \Lsymp$, we have
$\mathcal{M}_{0}^{\vee} = C^{-1}\mathcal{M}_{0}$.

On the other hand, we take an element $(\mathcal{M}_{i})_{i=0}^{2n-1}$ from the right-hand side of (\ref{eq:latticedescription}).
By Theorem \ref{theorem:comparison}, there exists $v\in \Vsymp$ such that
$\mathcal{M}_{i} = g_{b,r}(v) \mathcal{L}_{i}$.
Since $(g_{b,r} (v) \mathcal{L}_{0})^{\vee} = \lambda(g_{b,r} (v))^{-1} g_{b,r} (v) \mathcal{L}_{0}$,
we have
\[
\ord (\lambda(g_{b,r}(v))) =  \ord (\lambda(g_{b,r}(v)))- \ord (\lambda (g_{b}^{\mathrm{red}}(v))).
\]
Therefore, we have $v \in \Lsymp$. {\cred This} finishes the proof of (\ref{eq:latticedescription}).

Now we can prove the desired statement by the same argument as in the proof of \cite[Proposition 6.12]{Chan2021}.
\end{proof}

We define
\[
n_{0}:=
\begin{cases}
1  & \textup{if } k=0,\\
2  & \textup{if } k=1.
\end{cases}
\]
We also put $n' :=\lfloor n/ n_{0} \rfloor$ as before.

We will study the structure of affine Deligne--Lusztig varieties in this case as an analogue of \cite[Theorem 6.17]{Chan2021}.

First, we give a short remark.

\begin{remark}
\label{remark:linindep}
\begin{enumerate}
\item
Suppose that $n$ is even and $k=1$.
For any $v=^{t}\!(v_{1}, \ldots, v_{2n})\in \Lsymp$, we put
\[
v_{i,0} := v_{i} \mod \p
\]
for $1 \leq i \leq 2n$.
Note that we have $v_{n+1,0} = \cdots = v_{2n-1,0} =0.$
We define the matrix $A= (a_{i,j})_{1\leq i,j \leq n} \in M_{n} (\overline{k})$ by
\[
a_{i,j} := 
\textup{the image of }
\begin{cases}
b_{2i-1,2j-1}  &(\textup{if } 1 \leq i \leq n' \textup{ and } 1 \leq j \leq n'),\\
b_{2i-1,2j} & (\textup{if } 1\leq i \leq n' \textup{ and } n'+1 \leq j \leq 2n'),\\
b_{2i. 2j-1} &(\textup{if } n'+1 \leq i \leq 2n' \textup{ and } 1 \leq j \leq n'),\\
b_{2i, 2j} &(\textup{if } n'+1 \leq i \leq 2n' \textup{ and } n'+1 \leq j \leq 2n'),
\end{cases}
\]
i.e., the submatrix corresponding to the 1st, 3rd, \ldots, $(n-1)$-th, $(n+2)$-th, \ldots, $(2n)$-th rows and columns.
Here, we put $g_{b}^{\mathrm{red}} (v) = (b_{i,j})_{1\leq i,j \leq 2n}$
Then we have $A\in \GSp_{2n}(\overline{\F}_{q})$ with respect to the symplectic form associated with
\[
\overline{\Omega}
:=
\left(\begin{array}{ccc|ccc}
&&&&&1\\
&\hsymb{0}&&&\iddots&\\
&&&1&&\\\hline
&&-1&&&\\
&\iddots&&&&\\
-1&&&&\hsymb{0}&
\end{array}\right).
\]
By definition of $g_{b}^{\mathrm{red}}$, we have the following.
\begin{itemize}
\item
For $1 \leq j\leq n'$, we have
\[
(a_{1j}, \ldots, a_{nj}) = 
(v_{1,0}^{q^{2j}}, v_{3,0}^{q^{2j}}, \ldots, v_{n-1,0}^{q^{2j}},
v_{n+2,0}^{q^{2j}}, v_{n+4,0}^{q^{2j}}, \ldots, v_{2n,0}^{q^{2j}}).
\]
\item
For $n'+1 \leq j \leq 2n'$, the vector
$(a_{1j}, \ldots, a_{nj})$ is a $\overline{k}$-linear combination of 
\[
(v_{1,0}^{q^{-2k}}, v_{3,0}^{q^{-2k}}, \ldots, v_{n-1,0}^{q^{-2k}}, v_{n+2,0}^{q^{-2k}}, v_{n+4,0}^{q^{-2k}}, \ldots, v_{2n,0}^{q^{-2k}})_{1 \leq k \leq j-n'}
\]
and
\[
(a_{11}, \ldots, a_{n1}).
\]
\end{itemize}
Therefore, by \cite[Subsection 6.3]{Chan2021}, we can show that
\[
(v_{1,0}, v_{3,0} \ldots, v_{n-1,0}, v_{n+2,0}, v_{n+4,0} \ldots, v_{2n,0})
\]
 are linearly independent over $\F_{q^2}$.
 \item
 Suppose that $n$ is odd and $k=1$.
For any $v=^{t}\!(v_{1}, \ldots, v_{2n})\in \Lsymp$, we define the matrices
$A = (a_{i,j})_{1\leq i,j \leq n}, A'= (a_{i,j}')_{1\leq i, j \leq n}  \in M_{n} (\overline{k})$ by
\begin{eqnarray*}
a_{i,j} &:=& \textup{the image of } b_{2i-1,2j-1},\\
a_{i,j}' &:=& \textup{the image of } b_{2i,2j}.
\end{eqnarray*}
Here, we put $g_{b}^{\mathrm{red}}(v) = (b_{i,j})_{1\leq i,j\leq 2n}$
Then we have 
\[
^{t}\!A' H A = \overline{\lambda(v)} H,
\]
where we put
\[
H:= 
\left(\begin{array}{ccc}
&&1\\
&\iddots&\\
1&&
\end{array}\right).
\]
Since we have $A \in \GL_{n}(\overline{k})$,
we can show that
\[
(v_{1,0}, v_{3,0}, \ldots, v_{2n-1,0})
\]
are linearly independent over $\F_{q^2}$ by the same argument as in (1).
 
\item
Suppose that $k=0$.
We can show that
the matrix $A \in M_{2n} (\overline{k})$ defined as the image of $g_{b}^{\mathrm{red}}(v)$ for $v\in \Lsymp$ is contained in $\GSp_{2n}(\overline{\F}_{q})$ with respect to the symplectic form associated with $\Omega$.

Therefore, by the same argument as in (1) again,
\[
(v_{1,0}, v_{2,0}, \ldots, v_{2n,0})
\]
are linearly independent over $\F_{q}$
for any $v=^{t}\!(v_{1}, \ldots, v_{2n})\in \Lsymp$.
\end{enumerate}
\end{remark}

\begin{definition}
\label{defn:tau}
\begin{enumerate}
\item
Suppose that $k=1$.
We define the permutation $\tau : \{1, \ldots, 2n\} \rightarrow \{ 1, \ldots, 2n\}$ as follows.
\[
\tau(i) =
\begin{cases}
\lceil \dfrac{i}{2} \rceil & \textup{if } 1\leq i \leq n \textup{ and } \lfloor \dfrac{i}{2} \rfloor \textup{ is even},\\
n+ \lceil \dfrac{i}{2} \rceil & \textup{if } 1\leq i \leq n \textup{ and } \lfloor \dfrac{i}{2} \rfloor \textup{ is odd},\\
2n+1- \tau(2n+1-i) & \textup{if } n+1 \leq i \leq 2n.
\end{cases}
\]
We also define the function $\phi_{r} \colon \{1, \ldots, 2n\} \rightarrow K$ by
\[
\phi_{r}(i) =
\begin{cases} 
(-1)^{\tau(i)-i} \varpi^{(2\lceil \frac{i}{4} \rceil -1)r + (\lceil \frac{i}{4} \rceil -1)}  & \textup{if } 1\leq i \leq n \textup{ and } i \textup{ is even}, \\
(-1)^{\tau(i)-i}
\varpi^{(2\lceil \frac{i-1}{4} \rceil)r + \lceil \frac{i-1}{4} \rceil}  & \textup{if } 1\leq i \leq n \textup{ and } i \textup{ is odd},\\
\varpi^{nr+n'- \ord \phi_{r} (2n+1-i)} & \textup{if } n+1 \leq i \leq 2n.
\end{cases}
\]
Let $E_{i,j} \in \GL (V)$ be the matrix whose entries are $0$ except that the $(i,j)$-th entry is $1$.
We put
$
x_{r} := \sum_{i=1}^{2n}  \phi_{r}(i) E_{i, \tau(i)},
$
which is an element of $\GSp (V)$ by definition. 
Note that the order of each non-zero entry of $x_{r}$ is the same as the order of the entry lying {\cred in the same position as that of} $g_{b,r} (v)$ for $v \in \Lsymp$.

We also put
\[
\varphi_{r} (i) := \ord \phi_{r} (i).
\]
\item
Suppose that $k=0$.
We define
\[
x_{r}:= \diag(1, \varpi^{r}, \ldots, \varpi^{r(n-1)},  \varpi^{r}, \ldots, \varpi^{rn}) \in \GSp(V).
\]
As in (1), we define $\tau$ and $\phi_{r}$ by the formula
$
x_{r} = \sum_{i=1}^{2n} \phi_{r} (i) E_{i,\tau(i)}.
$
Moreover, we put 
\[
\varphi_{r} (i) := \ord \phi_{r} (i).
\]

\end{enumerate}
\end{definition}

The following is an analogue of \cite[Proposition 6.15]{Chan2021}.

\begin{proposition}
\label{prop:cell containment}
Suppose that $r + k \geq m +1$.
Then 
 $X^{m}_{w_{r}}(b)_{\mathcal{L}}$
is contained in the higher Schubert cell $Ix_{r} I / I^{m} \subset \GSp(V)/I^{m}$.
\end{proposition}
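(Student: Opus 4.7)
The plan is to exhibit, for each $v \in \Lsymp$, an explicit factorization $g_{b,r}(v) = x_{r} \cdot h$ with $h \in I$; since $I^{m} \subset I$, this will yield $g_{b,r}(v) I^{m} \in I x_{r} I / I^{m}$.

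First, I would compute the $\varpi$-order of every entry of $g_{b,r}(v)$ using its defining formula, together with Lemma \ref{lemma:order} and the recursive expressions (\ref{eqn:Grep1})--(\ref{eqn:Grep2}) for the $G_{j}(v)$. The conditions built into $\Lsymp$, namely $v \in \mathcal{L}$ and $\alpha_{v} \in \varpi^{\lfloor kn/2 \rfloor}\co^{\times}$, provide uniform order control across all entries. The numbers $\varphi_{r}(i) = \ord \phi_{r}(i)$ introduced in Definition \ref{defn:tau} have been chosen precisely so that the entry at position $(i, \tau(i))$ of $g_{b,r}(v)$ has order equal to $\varphi_{r}(i)$, while at every position $(\ell, j)$ with $j \neq \tau(\ell)$ the order exceeds $\varphi_{r}(\ell)$ by the exact amount needed for $x_{r}^{-1} g_{b,r}(v)$ to have entries of order $\geq 0$ above the diagonal and $\geq 1$ below.

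Second, to confirm that $h := x_{r}^{-1} g_{b,r}(v)$ lies in $I$ and not just in some larger parahoric, I would invoke Remark \ref{remark:linindep}: the reductions modulo $\p$ of the rescaled pivot entries $(g_{b,r}(v))_{i,\tau(i)}/\phi_{r}(i)$ assemble into an invertible matrix in $\GSp$ or $\GL$ over $\overline{\F}_{q}$ (the precise ambient group depending on the parity of $n$ and the value of $k \in \{0,1\}$). Hence the diagonal entries of $h$ are units of $\co$, which together with the off-diagonal estimates from the first step establishes $h \in I$. Consequently $g_{b,r}(v) = x_{r} h \in x_{r} I \subset I x_{r} I$, as desired.

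The principal obstacle is the bookkeeping of $\varpi$-orders in the first step. The permutation $\tau$ from Definition \ref{defn:tau} interleaves the $F^{i}$-columns and $G_{j}$-columns of $g_{b,r}(v)$ in a pattern that depends on both the parity of $n$ and the value of $k$, and the inductive definition of $G_{j}(v)$ propagates orders between faraway entries of the matrix. Matching all these orders against the exponents $\varphi_{r}(i)$ thus requires a careful case-by-case analysis, although no conceptual input is needed beyond the order estimates already developed in Section \ref{section:comparison}.
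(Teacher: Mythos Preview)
Your proposal contains a genuine gap: the factorization $g_{b,r}(v)=x_{r}\cdot h$ with $h\in I$ does not exist in general. Concretely, take $k=0$, so that $\tau=\mathrm{id}$ and $x_{r}=\diag(1,\varpi^{r},\ldots)$. The first column of $g_{b,r}(v)$ is $v$, and by Remark~\ref{remark:linindep}(3) every entry $v_{i}$ lies in $\co^{\times}$. Then the $(2,1)$-entry of $h=x_{r}^{-1}g_{b,r}(v)$ is $\varpi^{-r}v_{2}$, which has order $-r<0$; so $h\notin I$. The same obstruction occurs for $k=1$ once one unwinds $\tau$. In other words, the valuations in a fixed \emph{row} of $g_{b,r}(v)$ are not aligned with those of $x_{r}$; only the column-by-column growth matches. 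Your assertion that ``at every position $(\ell,j)$ with $j\neq\tau(\ell)$ the order exceeds $\varphi_{r}(\ell)$ by the exact amount needed'' is precisely where the argument fails, and Remark~\ref{remark:linindep} cannot rescue it: linear independence of the $v_{i,0}$ guarantees that each is nonzero, which is the opposite of what you would need.

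What is actually required, and what the paper does, is a two-sided reduction: first multiply $g_{b,r}(v)$ on the \emph{left} by a sequence of elementary matrices in $I$ (row operations) to kill the entries below the pivots $(i,\tau(i))$, then multiply on the right to kill the remaining entries. The subtle point, which replaces your bookkeeping, is that after each left operation one must check that the next pivot still has the predicted order $\varphi_{r}(i)$; this is where the minor-nonvanishing argument from Remark~\ref{remark:linindep} is genuinely used, via a determinant computation on the reduction $\bar g$. Only after this triangularization does one reach a matrix $H$ satisfying both the order constraints of~(\ref{eq:Hcontain}) and the triangularity~(\ref{eq:Htriangular}), at which stage right multiplication by $I$ finishes the job. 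Your plan essentially skips the left multiplication, and that step cannot be skipped.
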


\begin{proof}
We only prove the case where $n$ is even and $k=1$ (other cases can be proved in the same way). 
We may assume that $m=0$.
Take an element  $g_{b,r} (v) I \in X^{0}_{w_{r}}(b)_{\mathcal{L}}$, where $v\in \Lsymp$.
We will show that $g_{b,r}(v) I \subset I x_{r} I$.
First, we will modify $g_{b,r}(v) = (g_{i,j})_{1 \leq i,j \leq 2n}$ by multiplying by elements of $I$ from the left-side.
We will modify {\cred the} $\tau(1), \ldots, \tau(n)$-th columns in this order.
More precisely, for the $\tau(j)$-th columns ($1 \leq j \leq n$),  we will use the row elementary transformations $\row_{j \to  i}$ to eliminate the $(i. \tau(j))$-th entries of $g_{b,r}(v)$ where
\[
2n+1- j \geq  i> j.
\]
{\cred Here, the order of elimination is as follows:}
 \[
 (2,\tau(1)), \ldots, (2n, \tau (1)), (3, \tau (2)), \ldots, (2n-1, \tau(2)), \ldots.
 \]
 A priori, it is non-trivial that we can proceed with such modifications. The main problem is the order of entries: Since we should use elementary transformations corresponding to elements in $I$, we should know the order of the $(j, \tau(j))$-th entries to execute the above modification in each step. 
 Such a problem is settled in the next paragraph.
 Once we can overcome such a problem, then the above modification is realized by the multiplication from the left by elements in $I$, and one can show that the above modifications do not affect already modified entries:
$\row_{j \to  i}$ itself does not affect {\cred the entries we have set to zero} since $(j, \tau(l))$-th entries for $1 \leq l \leq i-1$ is already {\cred zero}, and the counterpart $\row_{2n+1-i \to 2n+1-j}$ does not affect {\cred the entries we have set to zero} clearly since the elimination of $(2n+1-j,\tau(j))$-th entries takes place in the last step dealing with the $\tau(i)$-th column.  
Note that, if we {\cred could eliminate} above entries for $j=1, \ldots, \tau (l)$, then the $(i,j)$-entries with 
\begin{equation}
\label{eq:automvanish}
i > \max(2n-l, \tau^{-1} (j))
\end{equation}
are automatically {\cred eliminated} since our modifications are made in $\GSp (\breve{K})$.
In particular, after such a modification, the $(i,j)$-th entries for $ i > \tau^{-1} (j)$ and $j= \tau(1), \ldots, \tau (l)$ {\cred have been eliminated}.

In the following, we will verify that such modifications work.
By the definition of $g_{b,r}(v)$, the $(i,\tau(i))$-th entry of $g_{b,r}(v)$ equals to $\varphi_{r}(i)$ and the transformation modifying the $\tau(i)$-th column clearly preserves the order of the $(i+l, \tau(i+l))$-th entry for odd $l$.
If the order of the $(i+2, \tau (i+2))$-th entry coincides with the order of $\varphi_{r} (i+2)$ 
after modifying $\tau(i)$-th column,
then we can proceed with the elimination.
(Here, we suppose that $i \leq n-2$. Otherwise, we can proceed with the elimination without doing anything. In particular, this process is already done when $n=2$.)
We suppose that $i=2i'+1$ is odd (the even case can be proved in the same way).
Consider the matrix $\bar{g} = (\bar{g}_{s,t}) \in M_{n} (\overline{k})$ given by
\[
\bar{g}_{s,t} =
\begin{cases}
\textup{ the image of }   
\phi_{r}(t)^{-1}
g_{2s-1, \tau(t)}, &\textup{if } 1 \leq s \leq n', \\
\textup{ the image of }  
\phi_{r}(t)^{-1}
g_{2s, \tau(t)} & \textup{if } n'+1 \leq s \leq n.
\end{cases}
\]
Then by the same argument as in \cite[Lemma 6.16]{Chan2021}, we can show that the determinant of the {\cred upper-left} $(s \times s)$-minor is non-zero.
For $1 \leq i', j' \leq n$, the row elementary transformation \r{2i'-1}{2j'-1} on $g_{b,r}(v)$ corresponds to the row elementary transformation \r{i'}{j'} on $\bar{g}$ and such transformations preserves the determinant of the {\cred upper-left} $(i \times i)$-minor of $\overline{g}$ for $1 \leq i \leq n$.
Similarly, one can show that all the row elementary transformations in our process do not affect the {\cred upper-left} $(i \times i)$-minor of $\overline{g}$ for $1 \leq i \leq n$.
Therefore, after modifying  $\tau(i)$-th column, the {\cred upper-left} $(\frac{i+3}{2} \times \frac{i+3}{2})$ submatrix becomes upper triangular whose determinant is non-zero, so the $(\frac{i+3}{2}, \frac{i+3}{2})$-th entry is non-zero.
It means that the order of the $(i+2, \tau(i+2))$-th entry is the same as the order of $\varphi_{r} (i+2)$, and we can proceed with the modification.

Now we have successfully modified $\tau(1), \ldots, \tau(n)$-th columns.
Since the resulting matrix $H= (H_{i,j})$ is an element in $\GSp(V)$ (see (\ref{eq:automvanish})), we have
\begin{equation}
\label{eq:Htriangular}
H_{i,j} = 0
\quad \textup{ for } i > \tau^{-1}(j), 1\leq i,j \leq 2n. 
\end{equation}
We also have 
\begin{equation}
\label{eq:Hcontain}
H \in G_{x,0} \cdot \diag(1, 1, \ldots, \varpi^n, \varpi^n) \cdot \diag(1, \varpi^{r}, \ldots, \varpi^{(n-1)r}, \varpi^{r}, \ldots, \varpi^{nr})
\end{equation}
since {\cred $g_{b,r} (v)$ also lies in the right-hand side} and the above row transformations preserve the right-hand-side.

Finally, we will modify $H$ to show $H\in I x_{r} I$.
Basically, we use column elementary transformations to modify each row (more precisely, 1st, $\ldots$, $n$-th row) (note that the problem of the order of $(j, \tau(j))$-th entries as in the last modification does not occur since we already know the condition (\ref{eq:Htriangular}) and (\ref{eq:Hcontain}), and our modification preserves the determinant of matrices).
However, before that, we should treat exceptional entries (located on 1st, $\ldots$, $n$-th row) which cannot be eliminated by column elementary transformations.
Exceptional entries are the $(4l+3, 2l)$-th entries ($1 \leq 4l+3 \leq n$). Note that there is no exceptional entries when $n=2$.
To eliminate these entries, we {\cred use the row} elementary transformations $\row_{4l+4 \to 4l+3}$ since the order of the $(4l+3,2l)$-th entry is greater than the order of the $(4l+4, 2l)$-th entry. Note that such transformations do not affect other $(4l'+3,2l')$-th entries, and preserve the condition (\ref{eq:Htriangular}) and (\ref{eq:Hcontain}).
Let $H'$ be the resulting matrix.
We will modify $1, \ldots, n$-th rows of $H'$ in this order.
In the $i$-th row, if $i \not\equiv 3 \mod 4$ (resp.\,$i \equiv 3 \mod 4$),  first we will eliminate $(i,j)$-th entries for 
\begin{align*}
&j \in \{1, \ldots, 2n \} \setminus \{ \tau(1), \ldots, \tau(i), 2n+1- \tau(1), \ldots 2n+1- \tau (i) \} \\
(\textup{resp.\,}& j \in \{1, \ldots, 2n \} \setminus \{ \tau(1), \ldots, \tau(i), 2n+1- \tau(1), \ldots 2n+1- \tau (i), \dfrac{i-3}{2}\} ).
\end{align*}
by using $\col_{\tau(i) \to j}$.
Obviously, $\col_{\tau(i) \to j}$ itself only affects the $(i,j)$-th entry. On the other hand, the counterpart  $\col_{2n+1-j \to 2n+1- \tau(i)}$ does not affect already modified entries, since $(i, 2n+1- \tau(i))$ is not modified yet.

After that, we modify the $(i, 2n+1- \tau(i))$-th entry by using $\col_{\tau(i) \to 2n+1-\tau(i)}$, which only affects the $(i, 2n+1- \tau(i))$-th entry.  Finally, by multiplying the diagonal matrices in $I$ from the right (or left), we {\cred obtain} $x_{r}$.
{\cred This} finishes the proof of $g_{b,r} (x) \in I x_{r} I$.
\begin{figure}
\centering
\captionsetup{justification=centering}
\[
\left(\begin{array}{cccc|cccc}
\fbox{$\co$}&\p^{1+r}&\p^{1+2r}&\p^{2+3r}&\p^{1+r}&\p^{1+2r}&\p^{2+3r}&\p^{2+4r}\\
\co&\p^{r}&\p^{1+2r}&\p^{1+3r}&\fbox{$\p^{r}$}&\p^{1+2r}&\p^{1+3r}&\p^{2+4r}\\
\co&\uwave{\p^{1+r}}&\p^{1+2r}&\p^{2+3r}&\p^{1+r}&\fbox{$\p^{1+2r}$}&\p^{2+3r}&\p^{2+4r}\\
\co&\fbox{$\p^{r}$}&\p^{1+2r}&\p^{1+3r}&\p^{r}&\p^{1+2r}&\p^{1+3r}&\p^{2+4r}\\\hline
\p&\p^{1+r}&\p^{2+2r}&\p^{2+3r}&\p^{1+r}&\p^{2+2r}&\fbox{$\p^{2+3r}$}&\p^{3+4r}\\
\co&\p^{1+r}&\fbox{$\p^{1+2r}$}&\p^{2+3r}&\p^{1+r}&\p^{1+2r}&\p^{2+3r}&\p^{2+4r}\\
\p&\p^{1+r}&\p^{2+2r}&\fbox{$\p^{2+3r}$}&\p^{1+r}&\p^{2+2r}&\p^{2+3r}&\p^{3+4r}\\
\co&\p^{1+r}&\p^{1+2r}&\p^{2+3r}&\p^{1+r}&\p^{1+2r}&\p^{2+3r}&\fbox{$\p^{2+4r}$}
\end{array}\right).
\]
\caption*{The shape of the right-hand side of (\ref{eq:Hcontain}) for $\GSp_{8}$\\
The $(i, \tau (i))$-th entries for $1 \leq i \leq 8$ are enclosed in a square.\\
The exceptional entry is marked with a wave line.
}
\end{figure}
\end{proof}

\subsection{Structure of  $\Lsymp$, $n:$ even case}
\label{subsection:structureofLsymp_even}
In this subsection, we suppose that $n$ is even.

\begin{definition}
\label{def:findlv}
We put
\[
\overline{V}:= 
\begin{cases}
\mathcal{L} / \varpi \mathcal{L} & \textup{if } k=0,\\
\mathcal{L} / F(\mathcal{L}) & \textup{if } k=1.
\end{cases}
\]
There exists the natural symplectic form $\langle , \rangle$ on $\overline{V}$ (cf.\ Remark \ref{remark:linindep}). 
Here, $F\colon \overline{V} \rightarrow \overline{V}$ is the Frobenius morphism over $\F_{q^{2}}$ (i.e., the $q^{2}$-th power map).
Note that
the image of 
\[
e_{i} 
\begin{cases}
(i \equiv 1 \mod n_{0})  &(\textup{if } 1 \leq i \leq n) \\
(i \equiv 0 \mod n_{0})  &(\textup{if } n+1 \leq  i \leq 2n)
\end{cases}
\]
form a {\cred basis} of $\overline{V}$.
In the following, we regard $\overline{V}$ as $\overline{\F}_{q}^{\oplus n'}$ via the above basis.

{\cred Moreover}, we define
$\overline{w} \in \GSp({\overline{V}})$ by
\[
\overline{w} = 
\left(\begin{array}{cccc|cccc}
&&&&-1&&&\\
1&&\hsymb{0}&&&&&\\
&\ddots&&&&\hsymb{0}&&\\
&&1&&&&&\\\hline
&&&&&1&&\\
&&&&&&\ddots&\\
&\hsymb{0}&&&\hsymb{0}&&&1\\
&&&1&&&&
\end{array}\right).
\]
We also denote the upper-half Borel subgroup of $\GSp(\overline{V})$ by $\overline{B}$, and its unipotent radical by $\overline{U}$.
Note that $\overline{B}$ and $\overline{U}$ {\cred are} $\overline{\sigma}$-stable, where $\overline{\sigma}$ is the Frobenius morphism over $\F_{q^{n_{0}}}$.

We put
\[
\overline{\mathcal{L}_{b}^{\mathrm{symp}}}
 :=
\{
v \in \overline{V} \mid
\langle v, \overline{\sigma}^{i}(v) \rangle = 0 \, (1\leq i \leq n'-1), \langle v, \overline{\sigma}^{n'} (v) \rangle \neq 0
\}.
\]
\end{definition}

\begin{lemma}
\label{lemma:finitecomparison}
We define the equivalence relation $\sim$ on 
$\overline{\mathcal{L}_{b}^{\mathrm{symp}}}$
by 
\[
v\sim v' \Leftrightarrow v \in  \overline{\F}_{q}^{\times}\cdot v'.
\]
Then we have an isomorphism of schemes over $\overline{\F}_{q}$
\begin{equation}
\label{eqn:cldlv}
\overline{\mathcal{L}_{b}^{\mathrm{symp}}}
/ \sim \,
\simeq 
X_{\overline{w}}^{(\overline{B})}.
\end{equation}
Here, we equip the left-hand side of (\ref{eqn:cldlv}) with the scheme structure by the locally closed embedding 
\[
\overline{\mathcal{L}_{b}^{\mathrm{symp}}}
 / \sim \,
\hookrightarrow \mathbb{P}(\overline{V}),
\] 
and the right-hand side of (\ref{eqn:cldlv}) is the classical Deligne--Lusztig variety for $\GSp_{2n'}$ over $\F_{q^n_{0}}$.
Note that $\Lsympbar/ \sim $ is naturally identified with the subset of $\Lsympbar$ consisting of $(v_{1}, \ldots, v_{2n}) \in \Lsympbar$ with $v_{1} =1.$
We denote this subset by $\PP(\Lsympbar)$.
\end{lemma}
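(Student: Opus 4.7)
The plan is to mirror the proof of the bottom diagram of Theorem~\ref{theorem:comparison} in the classical, finite-dimensional setting, treating $\overline{V}$ as the analogue of $V$, $\overline{\sigma}$ as the analogue of the Frobenius $F$, and $\overline{w}$ as the analogue of $w$. Since $\dim \overline{V} = 2n'$, $\overline{w}$ is a Coxeter element of $\GSp(\overline{V})$, and both $\Lsympbar/\sim$ and $X_{\overline{w}}^{(\overline{B})}$ are expected to be smooth of dimension $n'$, so the statement is at least plausible on dimensional grounds.

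First I would define a classical analogue $\overline{g}\colon \Lsympbar \to \GSp(\overline{V})$ of $g_{b,0}$: given $v \in \Lsympbar$, set $\overline{\alpha}_v := \langle v, \overline{\sigma}^{n'}(v)\rangle \in \overline{\F}_q^{\times}$, define vectors $\overline{G}_1(v), \ldots, \overline{G}_{n'-1}(v)$ by the same recursion as for $G_i(v)$ (with the operator $V_k$ replaced by $\overline{\sigma}^{-1}$ and $\alpha_v$ by $\overline{\alpha}_v$), and put
\[
\overline{g}(v) := (v, \overline{\sigma}(v), \ldots, \overline{\sigma}^{n'-1}(v), \overline{G}_1(v), \ldots, \overline{G}_{n'-1}(v), \overline{\sigma}^{n'}(v)).
\]
A verbatim translation of Lemma~\ref{lemma:order} (with $r=k=0$ and ground ring $\overline{\F}_q$) shows that $\overline{g}(v) \in \GSp(\overline{V})$ and that $\overline{g}(v)^{-1}\overline{\sigma}(\overline{g}(v)) \in \overline{w}\overline{B}$, so $\overline{g}$ descends to an algebraic map $\overline{g}\colon \Lsympbar/\sim \,\to X_{\overline{w}}^{(\overline{B})}$.

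Next I would verify that this morphism is a bijection on $\overline{\F}_q$-points. Injectivity is immediate, since the first column of $\overline{g}(v)$ recovers $v$, and two representatives yielding the same $\overline{B}$-coset must agree up to a torus element that acts on the first column through $\overline{\F}_q^{\times}$. Surjectivity is an exact transcription of the argument in Subsection~\ref{subsection:pfsemiinf}: given $g\overline{B}\in X_{\overline{w}}^{(\overline{B})}$, one replaces $g$ by a representative with $g^{-1}\overline{\sigma}(g)\in \overline{w}\overline{B}$ and then eliminates the entries of the resulting matrix by successive $\overline{\sigma}$-conjugations by elementary matrices in $\overline{B}\cap \overline{w}\overline{B}\overline{w}^{-1}$. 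The order of eliminations and the verification that already-vanished entries are not disturbed are formally identical; the crucial simplification is that the procedure is strictly finite, so the infinite-product convergence issue of Section~\ref{section:pfcomparison} does not arise.

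Finally I would upgrade the bijection to an isomorphism of schemes. The left-hand side is the locally closed subscheme of $\mathbb{P}(\overline{V})$ cut out by $\langle v, \overline{\sigma}^i(v)\rangle = 0$ for $1\leq i\leq n'-1$ together with the open condition $\langle v, \overline{\sigma}^{n'}(v)\rangle\neq 0$, while the right-hand side is a smooth locally closed subscheme of $\GSp(\overline{V})/\overline{B}$ of the same dimension $n'$. The morphism $\overline{g}$ is evidently algebraic and bijective on $\overline{\F}_q$-points between smooth varieties of equal dimension, and hence is an isomorphism, for example by Zariski's main theorem. The main obstacle, as in Theorem~\ref{theorem:comparison}, will be surjectivity, but the finite-dimensional setting makes the elementary transformation argument considerably cleaner than in Section~\ref{section:pfcomparison}.
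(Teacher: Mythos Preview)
Your approach is essentially the same as the paper's: define $\overline{g}$ by the recursion mimicking $g_{b,0}$, and reduce everything to the argument of Subsection~\ref{subsection:pfsemiinf}. The paper's proof is in fact just a two-sentence pointer to exactly this.

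There is one genuine gap, however, in your final step. In positive characteristic, a bijective morphism between smooth varieties of the same dimension need \emph{not} be an isomorphism --- the Frobenius morphism is the standard counterexample --- so Zariski's main theorem alone does not give what you want. The cleanest fix is to observe that the inverse is already a morphism of schemes: the assignment $g\overline{B}\mapsto [\text{first column of }g]$ is the restriction to $X_{\overline{w}}^{(\overline{B})}$ of the natural projection $\GSp(\overline{V})/\overline{B}\to \mathbb{P}(\overline{V})$, and your surjectivity argument shows its image lies in $\Lsympbar/\!\sim$. Since the two morphisms are mutually inverse on $\overline{\F}_q$-points and both source and target are reduced, they are inverse morphisms of schemes. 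This is presumably what the paper means by the terse phrase ``by definition of $\overline{g}$, this is an isomorphism of schemes.''
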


\begin{proof}
We only prove the case where $k=1$ (the case where $k=0$ can be proved in the same way).
We define the map $\overline{g} \colon \overline{\mathcal{L}_{b}^{\mathrm{symp}}} \rightarrow \GSp( \overline{V})$ as follows. We put
\[
\overline{G}_{1}(v) := 
- \frac{\alpha_{v}}{\overline{\sigma}^{-1}(\alpha_{v})} (\overline{\sigma}^{-1}(v)- 
\frac{\langle \overline{\sigma}^{-1} (v), \overline{\sigma}^{n'}(v) \rangle}{\alpha_{v}}v
),
\]
where we put $\alpha_{v}:= \langle v, \overline{\sigma}^{n'}(v) \rangle$.
We also put
\[
\overline{G}_{i+1}(v):= \frac{\alpha_{v}}{\overline{\sigma}^{-1}(\alpha_{v})} (\overline{\sigma}^{-1}(\overline{G}_{i}(v))-\frac{\langle \overline{\sigma}^{-1}(\overline{G}_{i}(v)), \overline{\sigma}^{n'}(v)\rangle}{\alpha_{v}}v)\,\,\,\,\, (i=1,n+1,\ldots,n-2).
\]
Then for any $v \in \la_{b,1}^{\mathrm{symp}}$, we can define $\overline{g} (v)$ by
\begin{eqnarray*}
\overline{g}(v):=(v, \overline{\sigma}(v),\ldots, \overline{\sigma}^{n'-2}(v), \overline{\sigma}^{n'-1}(v), \overline{G}_{1}(v), \overline{G}_{2}(v),\ldots, \overline{G}_{n'-1}(v), \overline{\sigma}^{n'}(v)).
\end{eqnarray*}
By the same proof as in Theorem \ref{theorem:comparison} (1), $\overline{g}(v)$ induces the desired isomorphism. By definition of $\overline{g}$, this is an isomorphism of schemes.
\end{proof}

\begin{definition}
\label{definition:lh}
\begin{enumerate}
\item
Suppose that $k=0$.
We put
\[
\la_{h} := (\co / \p^{h})^{\oplus 2n},
\]
which is a quotient of $\la$.
We also put
\[
\la_{b,h}^{\mathrm{symp}}
:= \left\{ v=(\overline{v_{1}}, \ldots \overline{v_{2n}}) \in \la_{h}
\left|
\begin{array}{l}
\langle v, F^{i}(v) \rangle = 0 \mod \p^{h}\quad (1 \leq i\leq n-1)\\
\langle v, F^{n}(v) \rangle \neq (\co/\p^{h})^{\times}\\
\end{array}
\right.
\right\}.
\]
\item
Suppose that $k=1$.
For any $h \geq 1$, we put 
\[
\la_{h} := (\co / \p^{h})^{\oplus n} \oplus \p / \p^{h+1} \oplus \co / \p^{h} \oplus \cdots \oplus \p / \p^{h+1} \oplus \co / \p^{h},
\]
which is a quotient of $\la$.
We also put
\[
\la_{b,h}^{\mathrm{symp}}
:= \left\{ v=(\overline{v_{1}}, \ldots \overline{v_{2n}}) \in \la_{h} \left|
\begin{array}{l}
\langle v, F^{i}(v) \rangle = 0 \mod \p^{h+\lceil \frac{i}{2} \rceil}\quad (1 \leq i\leq n-1) \\
\langle v, F^{n}(v) \rangle \in \varpi^{n'} (\co/\p^{h+n'})^{\times}\\
\end{array}
\right.
\right\}.
\]
\end{enumerate}
\end{definition}

\begin{remark}
\label{remark:Lh}
The above set $\la_{b,h}^{\mathrm{symp}}$ admits a natural scheme structure of finite type over $\overline{k}$ (resp.\,perfectly of finite type over $\overline{k}$) if $\chara K>0$ (resp.\,$\chara K=0$).
Indeed, such a structure for $\la_{h}$ is given in the same way as the definition of $L_{[r,s)}$ in \cite[p.1815]{Chan2021}.
Moreover, since the symplectic form $\langle v, F^{i}(v) \rangle$ defines a function on a scheme $\la_{h}$,
we can regard $\la_{b,h}^{\mathrm{symp}}$ as a locally closed subscheme of $\la_{h}$ naturally.
We put 
\[
\la'_{h} := \ker (\la_{h} \rightarrow \overline{V}).
\]
Then $\la'_{h}$ also admits an affine space (resp.\,perfection of an affine space) structure.
We can identify $\overline{V} \times_{\overline{k}} \la_{h}'$ (resp.\,$\overline{V}^{\mathrm{perf}} \times_{\overline{\F}_{q}} \la'_{h}$, where $\mathrm{perf}$ is the perfection) with $\la_{h}$ by the natural map
\[
(\overline{v}, w) \mapsto [\overline{v}] +w.
\]
Here, [-] is a lift defined by the same way as in Definition \ref{definition:Lbmr}.(3).
\end{remark}

\begin{lemma}
\label{lemma:affinefibration}
\begin{enumerate}
\item
For any $h \geq 1$, the natural morphism
\[
\la_{b,h}^{\mathrm{symp}} \rightarrow \Lsympbar
\]
is a surjection.
\item
Consider the embedding 
\[
\la_{b,h}^{\mathrm{symp}} \times_{\Lsympbar} \Lsympbar^{\mathrm{perf}}
\subset \la'_{h} \times_{\overline{\F}_{q}} \Lsympbar \times_{\Lsympbar} \Lsympbar^{\mathrm{perf}},
\] 
where the right-hand side is an affine space $\mathbb{A}_{\Lsympbar^{\mathrm{perf}} }$ over $\Lsympbar^{\mathrm{perf}}$ (resp.\,the perfection of an affine space $\mathbb{A}^{\mathrm{perf}}_{\Lsympbar^{\mathrm{perf}}}$ over $\Lsympbar^{\mathrm{perf}})$ if $\chara K > 0$ (resp.\,$\chara K =0$).
Then this embedding is given by the intersection of hyperplane sections of $\mathbb{A}_{\Lsympbar^{\mathrm{perf}}}$ 
(resp.\,$\mathbb{A}^{\mathrm{perf}}_{\Lsympbar^{\mathrm{pref}}}$).
In particular,
$
\la_{b,h}^{\mathrm{symp}} \times_{\Lsympbar} \Lsympbar^{\mathrm{perf}} 
$
is isomorphic to an affine space (resp.\,the perfection of an affine space) over $\Lsympbar^{\mathrm{perf}}$.
\end{enumerate}
\end{lemma}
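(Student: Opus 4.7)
The plan is to exploit the splitting $\la_h \simeq \overline{V} \times_{\overline{\F}_{q}} \la'_h$ (resp.\ $\overline{V}^{\mathrm{perf}} \times_{\overline{\F}_{q}} \la'_h$ when $\chara K = 0$) from Remark \ref{remark:Lh}, so that a lift of $\bar{v} \in \Lsympbar$ to $\la_{b,h}^{\mathrm{symp}}$ is encoded by $w \in \la'_h$ satisfying the congruences
\[
\langle [\bar{v}] + w,\, F^i([\bar{v}] + w) \rangle \equiv 0 \pmod{\p^{h + \lceil i/2 \rceil}}, \quad 1 \leq i \leq n-1,
\]
and the analogous unit condition at $i = n$. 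Expanding by bilinearity, the constant term $\langle [\bar{v}], F^i([\bar{v}]) \rangle$ is a fixed element independent of $w$, and the quadratic term $\langle w, F^i(w) \rangle$ lies in $\p^{2h + \lceil i/2 \rceil}$ because $w \in \la'_h$, hence is absorbed into the defining ideal. Thus each of the $n$ conditions reduces to an affine-linear equation in the coordinates of $w$ whose coefficients are polynomial in those of $[\bar{v}]$, and the question becomes whether the resulting system cuts out a sub-affine-bundle of $\la'_h \times_{\overline{\F}_{q}} \Lsympbar^{\mathrm{perf}}$.

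The principal difficulty is that these coefficients involve Frobenius twists: in $\langle [\bar{v}], F^i(w) \rangle + \langle w, F^i([\bar{v}]) \rangle$ the entries of $[\bar{v}]$ appear to powers $q^i$, so over $\Lsympbar$ itself the system is only semilinear in $w$ and need not cut out a sub-affine-bundle. The way out is exactly the substitution illustrated in the toy calculation (\ref{eqn:toycase2})--(\ref{eqn:toycase3}) of the introduction: a change of variables of the form $w'_j = x^{1/q^{s}} w_j - (\text{correction in lower } w_k)$ converts the Artin--Schreier-type expression into an honest linear form whose leading coefficient is a fractional power of $\langle \bar{v}, \overline{\sigma}^{n'}(\bar{v}) \rangle$, which is a unit on $\Lsympbar^{\mathrm{perf}}$ by definition of $\Lsympbar$. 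After such a substitution each of the defining congruences splits up, level by level in the $\varpi$-adic filtration of $\la'_h$, into honest hyperplane sections whose top coefficients are invertible on $\Lsympbar^{\mathrm{perf}}$, so their intersection has the expected codimension and is an affine space, or its perfection, over $\Lsympbar^{\mathrm{perf}}$.

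I would organize the argument by induction on $h$. The base case $h = 1$ is essentially the toy computation of the introduction, and the key inputs are the non-degeneracy $\langle \bar{v}, \overline{\sigma}^{n'}(\bar{v}) \rangle \neq 0$ on $\Lsympbar$ together with the $\F_{q^{n_{0}}}$-linear independence recorded in Remark \ref{remark:linindep}, which jointly ensure that after the substitution each pivot coefficient is a genuine unit rather than merely non-zero. For the inductive step I write $w = w_{<h} + \varpi^{h} w_{h}$ with $w_{<h}$ already solving the level-$h$ conditions and $w_h \in \la'_1$ parameterizing the lift to level $h+1$; the obstruction in $\p^h / \p^{h+1}$ is again the same affine-linear system in $w_h$, so the same substitution works. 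Statement (1) then follows from (2) as a set-theoretic consequence: the geometric fibers of $\la_{b,h}^{\mathrm{symp}} \to \Lsympbar$ are non-empty affine spaces after base change to $\Lsympbar^{\mathrm{perf}}$, hence non-empty over $\Lsympbar$ itself since $\Lsympbar^{\mathrm{perf}} \to \Lsympbar$ is surjective on geometric points. The main obstacle throughout is the bookkeeping --- identifying at each level of the filtration and each $i$ the correct pivoting coordinate of $w$, writing down the substitution explicitly, and verifying that its leading coefficient is a unit on $\Lsympbar^{\mathrm{perf}}$. The toy case indicates the shape of this computation, but carrying it out uniformly across all $n$, $k$, and $h$ is where the real work lies.
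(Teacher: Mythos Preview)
Your outline matches the paper's approach: reduce to the case $h=1$ by induction on $h$, recognize the level-$1$ conditions as a semilinear (Artin--Schreier type) system in the fiber coordinates over the base, and solve it after passing to the perfection using the nondegeneracy on $\Lsympbar$. Two points deserve correction.

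First, the sentence ``the quadratic term $\langle w, F^i(w)\rangle$ lies in $\p^{2h+\lceil i/2\rceil}$ because $w\in\la'_h$'' is wrong as stated. Membership in $\la'_h=\ker(\la_h\to\overline V)$ means $w$ lies in the image of $F(\la)$, not in $\varpi^h\la$; writing $w=F(u)$ gives $\langle w,F^i(w)\rangle=\lambda(b)\,\sigma\langle u,F^i(u)\rangle$, which raises the valuation by exactly one, not by $h$. This is enough for $h=1$ (and is why, in the paper's explicit equations \eqref{eq:first}, every monomial pairs a base coordinate with a fiber coordinate), but it does \emph{not} kill the quadratic term directly for $h\ge 2$. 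Your inductive step, in which the new variable is $\varpi^h w_h$, is what actually handles the higher levels, so the argument survives; just don't invoke the false valuation bound.

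Second, and more substantively, the base case $h=1$ is not ``essentially the toy computation.'' The toy case is a single equation; here one has $n'$ coupled semilinear equations $\eta_i^{q^{2i-1}}+\sum_j q_{ij}\eta_j=0$ in the $n$ fiber variables, and the content of the proof is to decouple them. The paper does this by the explicit linear change $\eta_i:=\langle v'',E^{-(2i-1)}(v')\rangle$ over $\mathcal O_{\Lsympbar}^{\mathrm{perf}}$ and then a Gaussian elimination on the coefficient matrix $Q$ (producing a lower-unitriangular $u$ with $Q'=uQ$ satisfying $q'_{i,j}=0$ for $i>n+1-j$, $j\ge n'$); at each pivot one must check that the relevant entry is a unit, and this is exactly where the linear independence of Remark~\ref{remark:linindep} enters. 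After this transformation the $i$-th equation reads $(u^{\mathrm{up}}\,{}^t(\eta_1^{q},\ldots,\eta_{n'}^{q^{2n'-1}}))_i+\sum_{j\le n+1-i}q'_{i,j}\eta_j=0$ with $q'_{i,n+1-i}$ a unit and the first term involving only $\eta_1,\ldots,\eta_i$, so one solves for $\eta_{n+1-i}$ in order $i=1,\ldots,n'$. Your proposal gestures at a ``pivoting coordinate'' but does not supply this triangularization, and without it the claim that the system cuts out hyperplane sections is unjustified.
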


\begin{proof}
We only prove the case where $k=1$. (the case where $k=0$ can be proved in the same way).
It suffices to show (2).
For simplicity, we suppose that $h=1.$
The general case can be shown similarly, by considering projections
$\la_{h} \rightarrow \la_{h-1}$ inductively (see the proof of Proposition \ref{surjectivity} for the related argument).
We define the coordinates $x_{1,0}, \ldots, x_{2n,0}$ of $\la_{b,1}^{\mathrm{symp}}$ as 
\[
x_{i,0} \textup{ is }
\textup{the image in } \overline{\F}_{q} \textup{ of } 
\begin{cases}
\varpi^{-1} \times \textup{ $i$-th component } &\textup{ if } n+1 \geq i \textup{ and } i \textup{ is odd,}\\
\textup{ $i$-th component } & \textup{ otherwise}. 
\end{cases}
\]
Then $\la_{b,1}^{\mathrm{symp}} \subset \mathbb{A}_{x_{1,0}, x_{2,0} \ldots, x_{2n-1,0}, x_{2n,0}}$ is defined by the following.
{\cred
\begin{equation}
\begin{aligned}
\label{eq:first}
&\langle v, F^{2i-1} (v) \rangle \\ 
=&\; \varpi^{i}\left(
 \sum_{\substack{1 \leq j \leq n \\ j \colon \textup{odd}}} (x_{j,0} x_{2n-j,0}^{q^{2i-1}} + x_{2n-j,0} x_{j,0}^{q^{2i-1}})  - \sum_{\substack{1 \leq j \leq n \\ j\colon \textup{even}}} (x_{j,0} x_{2n-j+2,0}^{q^{2i-1}} + x_{2n-j+2,0} x_{j,0}^{q^{2i-1}})
 \right)\\
=&\;0 \quad \textup{for $i=1, \ldots, n'$.}
\end{aligned}
\end{equation}

\begin{align}
\begin{aligned}
\label{eq:second}
\langle v, F^{2i} (v) \rangle 
=&\; \varpi^{i} \left( \sum_{\substack{1\leq j \leq n \\ j\colon \textup{odd}}} (x_{j,0} x_{2n+1-j,0}^{q^{2i}} - x_{2n+1-j,0} x_{j,0}^{q^{2i}}) \right)\\
=&\;0 \quad \textup{for $i=1, \ldots, n'-1$.}
\end{aligned}
\end{align}
\begin{align}
\begin{aligned}
\label{eq:third}
\langle v, F^{n} (v) \rangle
=&\; \varpi^{n'}
\left(  
\sum_{\substack{1\leq j \leq n, \\ j\colon \textup{odd}}} (x_{j,0} x_{2n+1-j}^{q^{n}} - x_{2n+1-j,0} x_{j,0}^{q^{n}})
\right)\\
\neq&\; 0
\end{aligned}
\end{align}
}
On the other hand,
$\Lsympbar \subset \mathbb{A}_{x_{1,0}, x_{3,0} ,\ldots, x_{n-1,0}, x_{n+2,0}, \ldots, x_{2n,0}}$ is defined by equations (\ref{eq:second}) and (\ref{eq:third}).
Therefore, we should solve the equations (\ref{eq:first}).
We define $v, v' \in W:= \overline{\F}_{q}^{\oplus 2n}$ by
\begin{eqnarray*}
v'&:=&  \sum_{i=1}^{n'} x_{2i-1,0} e_{2i-1} + \sum_{i=n'+1}^{n} x_{2i,0} e_{2i},\\
v''&:=& \sum_{i=1}^{n'} x_{2i,0} e_{2i} + \sum_{i=n'+1}^{n} x_{2i-1,0} e_{2i-1}.
\end{eqnarray*}
We also define the morphism $E \colon W \rightarrow W $ by 
\[
 \overline{\sigma} \circ
 \left(\begin{array}{ccccc}
 \left(
 \begin{array}{cc}
 0&1 \\
 1&0
 \end{array}
 \right)
&& \\
&\ddots & \\
&& 
\left(
 \begin{array}{cc}
 0&1 \\
 1&0
 \end{array}
 \right)
\end{array}\right),
\]
where $\overline{\sigma}$ is the $q$-th power map.
We define the coordinates $\eta_{1}, \ldots, \eta_{n}$ by
\[
\eta_{i} := \langle v'', E^{-(2i-1)} (v') \rangle.
\]
By definition of $\Lsympbar$ (cf.\ Definition \ref{def:findlv}), $\eta_{1}, \ldots, \eta_{n}$ give a linear coordinate transformation of $x_{2,0}, \ldots, x_{n,0}, x_{n+1,0}, \ldots, x_{2n-1,0}$ over $\co_{\Lsympbar}^{\mathrm{perf}}$.
We define the matrix $Q=(q_{i,j})_{1\leq i,j \leq n} \in \GL_{n} (\co_{\Lsympbar}^{\mathrm{perf}})$ by
\[
^t\!(E(v'), \ldots, E^{2n-1}(v')) = Q\, ^t\!(E^{-1}(v'), \ldots, E^{-(2n-1)}(v')).
\]
Note that, to define $Q$, we need to take the perfection.
We will show that there exists 
\[
u= 
\left(\begin{array}{ccc|ccc}
1&&\Lsymb{0}&&&\\
&\ddots&&&\hsymb{0}&\\
\Lsymb{\ast}&&1&&&\\\hline
&&&1&&\\
&\hsymb{0}&&&\ddots&\\
&&&\Lsymb{0}&&1
\end{array}\right) \in \GL_{n}(\co_{\Lsympbar}^{\mathrm{perf}}),
\]
such that $Q' := uQ$ is a matrix whose $(i,j)$-entries $q'_{i,j}$ ($i > n+1-j, j \geq n'$) are $0$.
Since 
\[
(v, F(v), \ldots, F^{2n-1} (v)) \in \GL_{2n}(\co)
\]
for any geometric point of $\Lsympbar$, we have $q_{1,n} \in \co_{\Lsympbar}^{\mathrm{perf} \times}$.
Therefore, by using the row transformation $u^{(1)} = (u_{i,j}^{(1)})$, we can eliminate $q_{2,n}, \ldots, q_{n,n}$. We denote the resulting matrix by $Q^{(1)} = (q_{i,j}^{(1)})$.
We have 
\[
u_{21}^{(1) } E^{1} (v') + E^{3} (v') = \sum_{j=1}^{n-1} q_{2,j}^{(1)} E^{-(2j-1)} (v').
\]
By the same reason as above, $q_{2,n-1}^{(1)} \in \co_{\Lsympbar}^{\mathrm{perf} \times}$.
Repeating these arguments, we have desired transformation $u = u^{(n-1)} \cdots u^{(1)}$.
The equations (\ref{eq:first}) are equivalent to 
\[
\eta_{i}^{q^{2i-1}} + \sum_{j=1}^{n} q_{i,j} \eta_{j} =0
\] 
for $1 \leq i \leq n'$ (cf.\ the arguments in Lemma \ref{lemma:hogehoge}).
By using the $n' \times n$-upper half part $u^{\mathrm{up}}$ of $u$, we can organize the equations as follows: 
\[
(u^{\mathrm{up}} \,
{}^t\!(\eta_{1}^{q}, \ldots, \eta_{n'}^{q^{2n'-1}}))_{i} + \sum_{j=1}^{n+1-i}  q'_{i,j} \eta_{j} =0, 
\]
where $q'_{i, n+1-i} \in \co_{\Lsympbar}^{\times}$ for $1 \leq i\leq n'$. We note that the first term of the left-hand side only contains polynomials of $\eta_{1}, \ldots \eta_{i}$.
Therefore, we can solve the $i$-th equation with respect to $\eta_{n+1-i}$ for $i=1, \ldots, n'$.
{\cred This} finishes the proof.
\end{proof}

\subsection{Structure of $\Lsymp$, $n:$ odd case}
In this subsection, we suppose that $n$ is odd.

\begin{definition}
\label{def:findlvodd}
We put 
\[
\overline{V}:= 
\begin{cases}
\mathcal{L} / \varpi \mathcal{L} & \textup{if } k=0,\\
\mathcal{L} / F(\mathcal{L}) & \textup{if } k=1.
\end{cases}
\]
as before.
Then the image of
\[
e_{i}\,\, (i \equiv 1 \mod n_{0}, 1 \leq i \leq 2n)
\]
form a basis of $\overline{V}$. 
\begin{enumerate}
\item
If $k=0$, then we can equip $\overline{V}$ with the natural symplectic form $\langle , \rangle$ associated with $\Omega$. By the same way as in Definition \ref{def:findlv}, we define 
\[
\overline{w} := 
\left(\begin{array}{cccc|cccc}
&&&&1&&&\\
1&&\hsymb{0}&&&&&\\
&\ddots&&&&\hsymb{0}&&\\
&&1&&&&&\\\hline
&&&&&1&&\\
&&&&&&\ddots&\\
&\hsymb{0}&&&\hsymb{0}&&&1\\
&&&1&&&&
\end{array}\right) \in \GSp (\overline{V}).
\]
We denote the upper-half Borel subgroup of $\GSp(\overline{V})$ by $\overline{B}$, and its unipotent radical by $\overline{U}$.
We denote the Frobenius morphism over $\F_{q}$ by $\overline{\sigma}$. Then $\overline{B}$ and  $\overline{U}$ are $\overline{\sigma}$-stable.
We also put 
\[
\overline{\mathcal{L}_{b}^{\mathrm{symp}}}
 :=
\{
v \in \overline{V} \mid
\langle v, \overline{\sigma}^{i}(v) \rangle = 0 \,\, (1\leq i \leq n'-1), \langle v, \overline{\sigma}^{n'} (v) \rangle \neq 0
\}
\]
as before.

\item
If $k=1$, we equip $\overline{V}$ with a inner product $(,)$ associated with
\[
H:= 
\left(\begin{array}{ccc}
&&1\\
&\iddots&\\
1&&
\end{array}\right).
\]
Since $H$ is a Hermitian matrix, we can consider the algebraic group $\GU_{n}$ over $\F_{q}$, whose $R$-valued point is given by
\[
\GU_{n} (R) := 
\{
(g, \lambda) \in \GL_{n} \times \Gm ( R \otimes_{\F_{q}} \F_{q^{2}}) \mid
^{t}\!c(g) H g = \lambda H
\}
\]
for any $\F_{q}$-algebra $R$ (note that $\lambda \in \Gm (R)$ naturally).
Here, $c(g)$ is the conjugation of $g$ induced by the non-trivial element of the Galois group $\F_{q^{2}}/ \F_{q}$.
For $\F_{q^{2}}$-algebra $R$, we have
\begin{eqnarray*}
\GU_{n} (R) &=&
\{
(g_{1}, g_{2}, \lambda) \in \GL_{n} \times \GL_{n} \times \Gm (R) \mid ^{t}\!(g_{2}, g_{1} ) H (g_{1},g_{2}) = (\lambda, \lambda) H
\}\\
&\simeq& 
\{
(g,\lambda) \in \GL_{n} \times \Gm (R)
\},
\end{eqnarray*}
i.e., we have $\GU_{n,\F_{q^{2}}} \simeq \GL_{n, \F_{q^{2}}} \times \mathbb{G}_{m,\F_{q^{2}}}$.
The last isomorphism is given by $(g_{1}, g_{2}, \lambda) \mapsto (g_{1}, \lambda)$.
We define 
\[
\overline{w}= 
( \left(\begin{array}{cccc|ccc}
&&&1&&&\\
1&&\hsymb{0}&&&&\\
&\ddots&&&&\hsymb{0}&\\
&&1&&&&\\\hline
&&&&1&&\\
&&&&&\ddots&\\
&\hsymb{0}&&&&&1
\end{array}\right), 1)
 \in \GU (\overline{\F}_{q}),
\]
where the grids are lined between the $\lceil \frac{n}{2} \rceil$-th row (resp.\,column) and the $\lceil \frac{n}{2} \rceil +1$-th row (resp.\,column). We use the same rule of grids in this subsection.
We also denote the first component of $\overline{w}$ by $\overline{w}_{1}.$
Note that $\overline{w}$ is $\sigma_{\GU_{n}}$-Coxeter element in the sense of \cite[Subsection 2.8]{He2022}.
Here, $\sigma_{\GU_{n}}$ is the Frobenius morphism associated with $\GU_{n}$.
We can also define the upper-half Borel subgroup of $\GU_{n}$ by $\overline{B}$, and its unipotent radical by $\overline{U}$, which is $\sigma_{\GU_{n}}$-invariant
We also denote the $q$-th power map by $\overline{\sigma}$.
We put
\[
\overline{\mathcal{L}_{b}^{\mathrm{symp}}}
 :=
\{
v \in \overline{V} \mid
( v, \overline{\sigma}^{i}(v) ) = 0 \,\, (1\leq i < n, \, \, i \colon \textup{odd}), ( v, \overline{\sigma}^{n} (v) ) \neq 0
\}.
\]
\end{enumerate}
\end{definition}

\begin{lemma}
\label{lemma:finitecomparisonodd}
We define the equivalence relation $\sim$ on 
$\overline{\mathcal{L}_{b}^{\mathrm{symp}}}$
by 
\[
v\sim v' \Leftrightarrow v \in  \overline{\F}_{q}^{\times}\cdot v'.
\]
Then we have an isomorphism of schemes over $\overline{\F}_{q}$
\[
\overline{\mathcal{L}_{b}^{\mathrm{symp}}}
/ \sim \,
\simeq 
X_{\overline{w}}^{(\overline{B})}.
\]
Here, 
$X_{\overline{w}}^{(\overline{B})}$ is the classical Deligne--Lusztig variety for $\GSp_{2n}$ (resp.\,$\GU_{n}$) if $k = 0$ (resp.\,$k=1$).
Note that we do not consider $\F_{q}$-structure for
$X_{\overline{w}}^{(\overline{B})}$.
Here, the scheme structure is defined as in Lemma \ref{lemma:finitecomparison}.
We put $\PP(\Lsympbar)$ in the same way as in Lemma \ref{lemma:finitecomparison}.
\end{lemma}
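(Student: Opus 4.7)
Following the strategy of Lemma \ref{lemma:finitecomparison}, I construct an explicit map $\overline{g}\colon \overline{\mathcal{L}_{b}^{\mathrm{symp}}} \to G(\overline{\F}_{q})$, with $G=\GSp_{2n}$ if $k=0$ and $G=\GU_{n}$ if $k=1$, whose first column recovers $v$ and whose remaining columns are obtained from $\overline{\sigma}^{i}(v)$ by a Gram--Schmidt procedure with respect to the appropriate bilinear or Hermitian form. Once $\overline{g}(v)$ is in place, I verify that $\overline{g}(v)^{-1}F(\overline{g}(v))\in \overline{B}\,\overline{w}\,\overline{B}$, so that $\overline{g}(v)\overline{B}$ represents a point of $X_{\overline{w}}^{(\overline{B})}$. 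Because $\overline{g}(cv)$ differs from $\overline{g}(v)$ by a diagonal scalar matrix for $c \in \overline{\F}_{q}^{\times}$, the map descends to $\Lsympbar/\sim$; as a morphism of quasi-projective schemes, via $\PP(\Lsympbar)\hookrightarrow \PP(\overline{V})$, its bijectivity on points upgrades to an isomorphism of schemes.

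\textbf{Case $k=0$.} Here $\overline{V}$ inherits a symplectic form, $\GSp(\overline{V}) = \GSp_{2n}$, and the argument is entirely parallel to Lemma \ref{lemma:finitecomparison}. With $\alpha_{v}:=\langle v,\overline{\sigma}^{n}(v)\rangle$, I define $\overline{G}_{1}(v),\ldots,\overline{G}_{n-1}(v)$ by the identical Gram--Schmidt recursion used there and set
\[
\overline{g}(v) := (v,\overline{\sigma}(v),\ldots,\overline{\sigma}^{n-1}(v),\overline{G}_{1}(v),\ldots,\overline{G}_{n-1}(v),\overline{\sigma}^{n}(v)).
\]
The vanishing $\langle v,\overline{\sigma}^{i}(v)\rangle=0$ for $1\le i\le n-1$ ensures $\overline{g}(v)\in\GSp(\overline{V})$, and an analogue of Lemma \ref{lemma:order} carried out over $\overline{\F}_{q}$ yields the Deligne--Lusztig condition with respect to $\overline{w}$. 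Injectivity is obvious from the first column, and surjectivity follows by the finite-dimensional Bruhat decomposition argument of Subsection \ref{subsection:pfsemiinf} applied to $\GSp(\overline{V})$.

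\textbf{Case $k=1$ and main obstacle.} The group $\GU_{n}$ splits as $\GL_{n}\times\Gm$ over $\F_{q^{2}}$, but its $\F_{q}$-Frobenius $F_{\GU_{n}}$ mixes the $q$-th power map with the outer twist $g\mapsto H\,{}^{t}g^{-1}H^{-1}$ (up to the similitude factor). The columns $v_{i}$ of $\overline{g}(v)$ are constructed by interleaving the shifts $\overline{\sigma}^{i-1}(v)$ with Hermitian Gram--Schmidt corrections against the previously chosen $v_{j}$'s, so that the resulting matrix satisfies ${}^{t}c(\overline{g}(v))\,H\,\overline{g}(v)=\lambda(\overline{g}(v))\cdot H$ and its $F_{\GU_{n}}$-conjugate lies in $\overline{B}\,\overline{w}\,\overline{B}$. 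The conditions $(v,\overline{\sigma}^{i}(v))=0$ for odd $1\le i<n$ and $(v,\overline{\sigma}^{n}(v))\neq 0$ are precisely what make this procedure run and guarantee $\overline{g}(v)\in \GU_{n}(\overline{\F}_{q})$. The hard part is bookkeeping: translating between the raw $\overline{\sigma}$-shift appearing in the definition of $\Lsympbar$ and the twisted Frobenius $F_{\GU_{n}}$ forces the Gram--Schmidt corrections to be noticeably subtler than in the symplectic case. Once this dictionary is set up, injectivity, surjectivity, and the scheme-theoretic upgrade proceed exactly as in Lemma \ref{lemma:finitecomparison}.
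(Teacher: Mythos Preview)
Your overall strategy matches the paper's: construct an explicit $\overline{g}(v)$ via a Gram--Schmidt procedure, verify the Deligne--Lusztig condition, and then argue bijectivity. For $k=0$ this is indeed identical to Lemma~\ref{lemma:finitecomparison}, and the paper says as much.

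The gap is in the $k=1$ case, specifically in your claim that ``surjectivity \ldots\ proceed[s] exactly as in Lemma~\ref{lemma:finitecomparison}.'' It does not. In the symplectic setting of Lemma~\ref{lemma:finitecomparison}, surjectivity is obtained by the elementary-transformation argument of Subsection~\ref{subsection:pfsemiinf}, where one reduces $C$ to the target shape via $\sigma$-conjugation $C\mapsto \sigma(P)^{-1}CP$ by symplectic elementary matrices; the row and column operations are coupled through the symplectic involution $P\mapsto{}$(symplectic adjoint). In the unitary case the relevant conjugation is $C\mapsto P^{-1}C\,\overline{\sigma}(P'_{\lambda})$, where $P'_{\lambda}$ is determined by ${}^{t}P'_{\lambda}HP=\lambda H$. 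For an elementary matrix $P=1_{n}+c\,e_{i,j}$ one gets $P'_{1}=1_{n}-c\,e_{n+1-j,\,n+1-i}$, so each step simultaneously performs a row operation $\mathrm{row}_{j\to i}$ and a \emph{column} operation $\mathrm{col}_{n+1-j\to n+1-i}$; this coupling is genuinely different from the symplectic one, and the elimination pattern from Subsection~\ref{subsection:pfsemiinf} does not carry over verbatim. The paper accordingly carries out a new three-step elimination tailored to this unitary coupling, with explicit verification at each stage that already-cleared entries are not restored. Your sketch does not supply this argument, and without it the surjectivity claim for $k=1$ is unsubstantiated.

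A secondary point: the paper's construction of $\overline{g}(v)$ in the $k=1$ case is more specific than ``interleaving shifts with Hermitian Gram--Schmidt corrections.'' It splits into two matrices $\overline{g}_{1}(v)$ (built from even powers $\overline{\sigma}^{2i}(v)$ and the even-indexed $\overline{G}_{2j}$) and $\overline{g}_{2}(v)$ (odd powers and odd-indexed $\overline{G}_{2j+1}$), with the recursion for $\overline{G}_{i+1}$ distinguishing even and odd $i$; the pair $(\overline{g}_{1},\overline{g}_{2},\alpha)$ is then checked to lie in $\GU_{n}(\overline{\F}_{q})$ via ${}^{t}\overline{g}_{2}H\overline{g}_{1}=\alpha H$, and one computes $\sigma_{\GU_{n}}(\overline{g})=\overline{g}\,\overline{w}\,C$ with $C\in\overline{B}$. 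Your description is compatible with this but too vague to stand as a proof; the parity bookkeeping is exactly what makes the Deligne--Lusztig condition come out with respect to the particular $\overline{w}$ in Definition~\ref{def:findlvodd}.
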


\begin{proof}
The case where $k=0$ follow from Lemma \ref{lemma:finitecomparison}.
We prove the case where $k=1$.
As in the proof of Lemma \ref{lemma:finitecomparison}, for any $v \in \overline{\mathcal{L}_{b}^{\mathrm{symp}}}$,
we put
\[
\overline{G}_{1}(v) := 
\frac{\alpha_{v}}{\overline{\sigma}^{-1}(\alpha_{v})} \overline{\sigma}^{-1}(v),
\]
where we put $\alpha_{v}:= ( v, \overline{\sigma}^{n}(v) )$.
We also put
\[
\overline{G}_{i+1}(v):= 
\begin{dcases}
\frac{\alpha_{v}}{\overline{\sigma}^{-1}(\alpha_{v})} \overline{\sigma}^{-1}(\overline{G}_{i} (v)) & \textup{if }i \textup{ is even},\\
\frac{\alpha_{v}}{\overline{\sigma}^{-1}(\alpha_{v})} \left(\overline{\sigma}^{-1}(\overline{G}_{i}(v))-\frac{( \overline{\sigma}^{-1}(\overline{G}_{i}(v)), \overline{\sigma}^{n}(v))}{\alpha_{v}}v\right) & \textup{if }i \textup{ is odd},
\end{dcases}
\]
for $1 \leq i \leq n-2$.
We put
\begin{eqnarray*}
\overline{g}_{1} (v) &=& (v, \overline{\sigma}^{2} (v), \ldots, \overline{\sigma}^{n-1} (v), \overline{G}_{2} (v) ,\ldots, \overline{G}_{n-1} (v)), \\
\overline{g}_{2} (v) &=& ( \overline{\sigma} (v), \overline{\sigma}^{3} (v), \ldots, \overline{\sigma}^{n-2} (v), \overline{G}_{1} (v), \overline{G}_{3}(v), \ldots, \overline{G}_{n-2}(v), \overline{\sigma}^{n} (v)).
\end{eqnarray*}
Then we have $^{t}\!\overline{g}_{2} (v) H \overline{g}_{1} (v) =\alpha H$. Therefore,
\[
\overline{g}(v) :=(\overline{g}_{1}(v), \alpha) = (\overline{g}_{1} (v), \overline{g}_{2} (v) , \alpha) \in \GU_{n} (\overline{\F}_{q}).
\]
We have
\begin{eqnarray*}
\overline{g}_{1} (v) \overline{w} &=& (\overline{\sigma}^{2} (v), \ldots, \overline{\sigma}^{n-1} (v), v,  \overline{G}_{2} (v) ,\ldots, \overline{G}_{n-1} (v)), \\
\sigma_{\GU_{n}}(\overline{g}_{2} (v)) &=& ( \overline{\sigma}^{2} (v), \ldots, \overline{\sigma}^{n-1} (v), \overline{\sigma}(\overline{G}_{1} (v)), \overline{\sigma}(\overline{G}_{3}(v)), \ldots, \overline{\sigma}(\overline{G}_{n-2}(v)),  \overline{\sigma}^{n+1} (v)).
\end{eqnarray*}
Since 
\[
\sigma_{\GU_{n}} (\overline{g}_{1} (v), \overline{g}_{2} (v), \alpha) = (\overline{\sigma} (\overline{g}_{2} (v)), \overline{\sigma} (\overline{g}_{1} (v)), \overline{\sigma} (\alpha)),
\]
we can show that
\[
\sigma_{\GU_{n}} (\overline{g} (v)) = \overline{g} (v) \overline{w} C (v)
\]
such that
\[
C(v) =
( \left(\begin{array}{cccc|ccc}
1&&&&&&\ast\\
&\ddots&\hsymb{0}&&&&\vdots\\
&&1&&&\hsymb{0}&\ast\\
&&&1&&&0\\\hline
&&&&\frac{\overline{\sigma}(\alpha)}{\alpha}&&\\
&&&&&\ddots&\Lsymb{0}\\
&\hsymb{0}&&&&&\frac{\overline{\sigma}(\alpha)}{\alpha}
\end{array}\right)
,\frac{\overline{\sigma}(\alpha)}{\alpha} ).
\]
Therefore, we  have a morphism
\[
\overline{\mathcal{L}_{b}^{\mathrm{symp}}} \rightarrow X_{\overline{w}}^{(\overline{B})}; v \mapsto \overline{g} (v) \overline{B}.
\]
Clearly, 
\[
\overline{\mathcal{L}_{b}^{\mathrm{symp}}}
/ \sim \,
\rightarrow X_{\overline{w}}^{(\overline{B})}
\]
is injective. Therefore, it suffices to show the surjectivity.
Let $\overline{g} \overline{B} \in X_{\overline{w}}^{(\overline{B})}$.
We may assume that 
$\overline{g}^{-1} \sigma_{\GU_{n}} (\overline{g}) \in \overline{w} \overline{B}$.
We put $\overline{g}= (\overline{g}_{1}, \overline{g}_{2}, \lambda)$.
We also put
$\overline{g}_{1}^{-1} \overline{\sigma}(\overline{g}_{2}) =C$,
where $C \in w_{1} B_{1}$.
Here, $B_{1}$ is the upper-half Borel subgroup of $\GL_{n}$.
By a similar argument to the proof of Claim \ref{claim:C'}, it suffices to show that there exists $P \in B_{1} (\overline{\F}_{q})$ such that
\[
P^{-1} C \overline{\sigma}(P'_{\lambda}) = C',
\]
with 
\[
C' :=
 \left(\begin{array}{cccc|ccc}
&&&\ast&&&0\\
\ast&&\hsymb{0}&&&&\ast\\
&\ddots&&&&\hsymb{0}&\vdots\\
&&\ast&&&&\ast\\\hline
&&&&\ast&&\\
&&&&&\ddots&\Lsymb{0}\\
&\hsymb{0}&&&&&\ast
\end{array}\right).
\]
Here, for any $\lambda \in \overline{\F}_{q}^{\times}$, $P'_{\lambda}$ is the matrix defined by
\[
^{t}\!P'_{\lambda} H P = \lambda H.
\]
To this end, we use the following kinds of elementary matrices
\[
P = 1_{n} + c e_{i,j}. 
\]
For such a $P$, we have $P_{1}' =1_{n} -c e_{n+1-j,n+1-i}$.
Therefore, the conjugation $\ast \rightarrow  P^{-1} \ast \sigma(P'_{1})$ {\cred acts by} adding $-c$ times the $j$-th row to the $i$-th row, followed by adding $- \overline{\sigma} (c)$ times the $(n+1-j)$-th column to the $(n+1-i)$-th column.
We will eliminate entries of $C$ by 3 steps. In the following, we use the same notation as in Section \ref{section:pfcomparison}.

First, we eliminate the following entries:
\begin{eqnarray*}
&&(n-1,n), (2,2), (n-2,n-1), \ldots, (n'+2, n'+3), (n',n'),\\
&&(n-2, n), (2,3), \ldots, (n'+2. n'+4), (n'-1, n')\\
&&\vdots\\
&&(n'+2, n), (2, n')
\end{eqnarray*}
Here, for the $i$-th line, we will eliminate 
$(n+1-(i+j), n+1-j), (j+1, j+i)$ $(1 \leq j \leq n'-i)$.
To eliminate $(n+1-(i+j), n+1-j),$ we use \r{n+1-j}{n+1-(i+j)}, which is together with \c{j}{i+j}.
Here, \c{j}{i+j} does not affect {\cred the entries we have set to zero} since $(j+1, i+j)$ has not {\cred been eliminated} yet.
To eliminate $(j+1, j+i),$ we use \r{j+i+1}{j+1}, which is together with \c{n-(i+j)}{n-j}.
Here, \c{n-(i+j)}{n-j} does not affect {\cred the entries we have set to zero} since $(n-(i+j), n-j)$ is not {\cred eliminated} yet.

In the next step, we eliminate the following entries:

\begin{eqnarray*}
&&(n'+1, n'+1) (n'+1,n'+2), (n',n'+2), (n', n'+3), \ldots, (3, 2n'), (2, 2n'),\\
&&(n', n'+1) (n'+1, n'+3), (n'-1, n'+2), (n',n'+4), \ldots, (4,2n'), (2, 2n'-1), \\
&&\vdots \\
&&(2, n'+1)
\end{eqnarray*}
Here, for the $i$-th line, we eliminate
$(n'+2-i, n'+1)$, 
$(n'+2-j, n'+i+j)$, and $(n'+2-(i+j),n'+1+j),
$ $(1 \leq i\leq n'-1 ,1\leq j \leq n'-i)$.
To eliminate $(n'+2-i, n'+1)$, we use \c{n'+1-i}{n'+1}, which is together with 
\r{n'+1+i}{n'+1}.
Note that \r{n'+1+i}{n'+1} affects only $(n'+1, n'+1+i)$, which is not {\cred elminated} yet. 
To eliminate $(n'+2-j, n' + i+j)$, we use
\c{n'+1-j}{n'+i+j}, which is together with
\r{n'+1+j}{n'+2-(i+j)}.
Note that \r{n'+1+j}{n'+2-(i+j)} affects only $(n'+2-(i+j), n'+1+j)$, which is not {\cred eliminated} yet.
To eliminate $(n'+2-(i+j),n'+1+j)$, we use \c{n'+1-(i+j)}{n'+1+j}, which is together with
\r{n'+1+i+j}{n'+1+j}. Here, \r{n'+1+i+j}{n'+1+j} affects only $(n'+2-(j+1),n'+i+(j+1))$, which is not {\cred eliminated} yet.

Third, we will {\cred eliminate} the following entries:
\begin{eqnarray*}
(1,n'+2), \ldots, (1,n)
\end{eqnarray*}
To eliminate $(1,i)$ $(n'+2 \leq i \leq n)$, we use \r{i}{1}, which is together with \c{n+1-i}{n}.
Here, \c{n+1-i}{n} affects only $(n+2-i, n)$, which does not need to be eliminated since $2\leq n+2-i \leq n'+1$.
{\cred This} finishes the proof.
\end{proof}

\begin{definition}
\label{definition:lhodd}
\begin{enumerate}
\item
Suppose that $k=0$.
We define $\la_{h}, \la_{b,h}^{\mathrm{symp}}$ by the same way as in Definition \ref{definition:lh} (1).
\item
Suppose that $k=1$.
We put
\[
\la_{h} := (\co / \p^{h})^{\oplus 2n},
\]
which is a quotient of $\la$.
We also put
\[
\la_{b,h}^{\mathrm{symp}}
:= \left\{ v=(\overline{v_{1}}, \ldots \overline{v_{2n}}) \in \la_{h} \left|
\begin{array}{l}
\langle v, F^{i}(v) \rangle = 0 \mod \p^{h+\lfloor \frac{i}{2} \rfloor}\quad (1 \leq i\leq n-1) \\
\langle v, F^{n}(v) \rangle \in \varpi^{n'} (\co/ \p^{h+n'})^{\times}
\end{array}
\right.
\right\}.
\]
\end{enumerate}
\end{definition}

\begin{lemma}
\label{lemma:affinefibrationodd}
\begin{enumerate}
\item
For any $h \geq 1$, the natural morphism
\[
\la_{b,h}^{\mathrm{symp}} \rightarrow \Lsympbar
\]
is a surjection.
\item
As in Lemma \ref{lemma:affinefibration},
consider the natural embedding 
\[
\la_{b,h}^{\mathrm{symp}} \times_{\Lsympbar} \Lsympbar^{\mathrm{perf}} 
\subset \mathbb{A}_{\Lsympbar^{\mathrm{perf}}} (\mathrm{resp.}\,\mathbb{A}^{\mathrm{perf}}_{\Lsympbar^{\mathrm{perf}}}),
\] 
if $\chara K >0$ (resp.\,$\chara K=0$).
Then this embedding is given by the intersection of hyperplane sections of $\mathbb{A}_{\Lsympbar^{\mathrm{perf}}}$ (resp.\,$\mathbb{A}^{\mathrm{perf}}_{\Lsympbar^{\mathrm{perf}}}$).
In particular,
$
\la_{b,h}^{\mathrm{symp}} \otimes_{\Lsympbar} \Lsympbar^{\mathrm{perf}} 
$
is isomorphic to an affine space (resp.\,perfection of an affine space) over $\Lsympbar^{\mathrm{perf}}$.
\end{enumerate}
\end{lemma}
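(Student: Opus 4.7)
The plan is to mirror the proof of Lemma \ref{lemma:affinefibration} in the odd-$n$ setting, separating the cases $k=0$ and $k=1$. As there, assertion (1) follows from (2), and by induction on $h$ using the projections $\la_h \twoheadrightarrow \la_{h-1}$ (whose kernels are vector spaces of the appropriate dimension) one reduces to the case $h=1$, where the defining congruences of $\la_{b,1}^{\mathrm{symp}}$ become equations inside the affine space $\la_1$.

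When $k=0$ the quotient $\overline{V} = \la/\varpi\la$ is symplectic of rank $2n$ and the argument of Lemma \ref{lemma:affinefibration} applies with only cosmetic changes (replacing the even $n$ by the odd $n=2n'+1$), so one can copy that proof. The substantive case is $k=1$, where $\overline{V} = \la/F(\la)$ has rank $n$ and is identified, via Definition \ref{def:findlvodd}(2), with the Hermitian space on which $\GU_n$ acts, while $\la'_1 := \ker(\la_1 \to \overline{V})$ contributes the remaining $n$ coordinate directions. The first step is to expand each congruence $\langle v, F^i(v)\rangle \equiv 0 \bmod \p^{1+\lfloor i/2\rfloor}$ in the coordinates of $\la_1$ and check that it splits as follows: the congruences for the appropriate parity of $i$ translate, through the identification with the Hermitian pairing $(\,,\,)$, into the defining equations of $\Lsympbar$ on the $\overline{V}$-part, while the remaining congruences are affine-linear in the $\la'_1$-coordinates with coefficients that are polynomial in the $\overline{V}$-coordinates.

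To solve the latter linear system over $\Lsympbar^{\mathrm{perf}}$, I will follow the template of Lemma \ref{lemma:affinefibration}: introduce an auxiliary Frobenius-like operator $E$ on $\overline{\F}_q^{\oplus 2n}$ adapted to the unitary signs, decompose a generic coordinate vector as $v'+v''$ with $v'$ in the $\overline{V}$-direction and $v''$ in the $\la'_1$-direction, and use the new coordinates $\eta_i := \langle v'', E^{-(2i-1)}(v')\rangle$ which provide an invertible linear change of the $\la'_1$-coordinates over $\co_{\Lsympbar}^{\mathrm{perf}}$. Writing the shifted vectors $E^{2j-1}(v')$ as a $Q$-combination of the $E^{-(2i-1)}(v')$ and triangularizing $Q$ by a suitable lower-unitriangular $u \in \GL_n(\co_{\Lsympbar}^{\mathrm{perf}})$—whose existence follows from the non-vanishing of the leading principal minors of $Q$, itself a consequence of $(v, F(v), \ldots, F^{2n-1}(v)) \in \GL_{2n}(\co)$ at every geometric point—converts the system into an anti-triangular shape that can be solved one $\eta_j$ at a time. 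This will yield the desired presentation of $\la_{b,1}^{\mathrm{symp}} \times_{\Lsympbar} \Lsympbar^{\mathrm{perf}}$ as an intersection of hyperplanes in an affine (resp.\,perfect-affine) space over $\Lsympbar^{\mathrm{perf}}$.

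The main obstacle I expect is verifying the clean splitting of congruences in the $k=1$ case: with the integral structures of Definitions \ref{definition:lhodd} and \ref{def:findlvodd}(2), one must check that $\langle v, F^i(v)\rangle \bmod \p^{1+\lfloor i/2\rfloor}$ really singles out the Hermitian pairing on $\overline{V}$ for the correct indices $i$, and that the complementary congruences are genuinely linear in the $\la'_1$-coordinates modulo this pairing (not merely affine with base-dependent constant term that could obstruct the affine-bundle conclusion). Once this compatibility is pinned down, the remainder of the argument is the linear-algebra step already carried out in Lemma \ref{lemma:affinefibration} and requires no new ideas.
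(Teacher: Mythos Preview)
Your proposal is correct and follows the same approach as the paper, which simply states that the result follows from the same proof as Lemma \ref{lemma:affinefibration}. Your detailed outline—reducing to $h=1$, separating the $k=0$ and $k=1$ cases, and carrying out the triangularization of $Q$ via the auxiliary operator $E$ and the coordinates $\eta_i$—is exactly the adaptation the paper has in mind, and the compatibility check you flag as the main obstacle is indeed the only point requiring care when transporting the argument to the Hermitian setting.
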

\begin{proof}
It follows from the same proof as in Lemma \ref{lemma:affinefibration}
\end{proof}

\subsection{Structure of affine Deligne--Lusztig varieties}
\label{subsection:structureofADLV}
In this section, we prove the structure theorem for affine Deligne--Lusztig varieties.

\begin{definition}
\label{definition:reductive}
We define a reductive group $\overline{G}$ as follows:\footnote{The field of definition is not essential. Indeed, in the second case, the corresponding Deligne--Lusztig variety is the same as the Deligne--Lusztig varieties for the Weil restriction of $\overline{G}$ (see Remark \ref{remark:reweilrestriction})}
\[
\overline{G} :=
\begin{cases}
\GSp_{2n} \textup{ over } \F_{q} & \textup{ if } n \textup{ is even and } k=0, \\
\GSp_{n} \textup{ over } \F_{q^{2}}  & \textup{ if } n \textup{ is even and } k=1, \\ 
\GSp_{2n} \textup{ over } \F_{q} & \textup{ if } n \textup{ is odd and } k=0,  \\
\GU_{n} \textup{ over } \F_{q} & \textup{ if } n \textup{ is odd and } k=1.
\end{cases}
\]
We also define $\overline{w}, \overline{U}$ as in Definitions \ref{def:findlv} and  \ref{def:findlvodd}.
\end{definition}

In this section, we prove the following theorem.

\begin{theorem}
\label{theorem:restradlv}
Suppose that $r+k \geq 1$.
Let $b\in \GSp(V)$ be the special representative or the Coxeter representative with $\kappa (b) = k$.
Then we have a decomposition of $\overline{\F}_{q}$-schemes
\[
X^{0}_{w_{r}} (b)_{\la}^{\mathrm{perf}} \simeq X_{\overline{w}}^{\overline{B}, \mathrm{perf}} \times \mathbb{A}^{\mathrm{perf}}.
\]
Here, $\mathbb{A}$ is an affine space over $\overline{\F}_{q}$ and $\mathrm{perf}$ means the perfection.
\end{theorem}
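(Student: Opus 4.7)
The plan is to identify $X^{0}_{w_{r}}(b)_{\la}^{\mathrm{perf}}$ as a finite-level quotient of $\Lsymp$ that admits an affine-space bundle description over the classical Deligne--Lusztig variety $X_{\overline{w}}^{\overline{B}}$, assembling Theorem \ref{theorem:comparison}, Proposition \ref{proposition:bmr}, the affine-fibration Lemmas \ref{lemma:affinefibration}/\ref{lemma:affinefibrationodd}, and the classical comparison Lemmas \ref{lemma:finitecomparison}/\ref{lemma:finitecomparisonodd}.

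First, by Theorem \ref{theorem:comparison} with $m=0$, Definition \ref{defn:component of adlv}, Proposition \ref{proposition:openclosed}, and the functorial variant of Theorem \ref{theorem:comparison} spelled out in Remark \ref{remark:morphismadlv}, the map $g_{b,r}$ identifies $X^{0}_{w_{r}}(b)_{\la}$ with the quotient $\Lsymp/\sim_{b,0,r}$ as perfect schemes. I will then unravel $\sim_{b,0,r}$ via Proposition \ref{proposition:bmr}: one has $v_{1}\sim_{b,0,r}v_{2}$ if and only if $v_{1}$ equals a $\co^{\times}$-multiple of $v_{2}$ plus a $\co$-linear combination of the remaining columns $\varpi^{ir}F^{i}(v_{2})$, $\varpi^{jr}G_{j}(v_{2})$, $\varpi^{nr}F^{n}(v_{2})$ of $g_{b,r}(v_{2})$. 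The hypothesis $r+k\geq 1$ forces these non-leading columns to reduce to zero in the quotient $\la\twoheadrightarrow\overline{V}$ defining $\Lsympbar$, so the composition $\Lsymp\to\la_{b,h}^{\mathrm{symp}}\to\Lsympbar$ descends to $\Lsymp/\sim_{b,0,r}$ for any truncation level $h\geq 1$.

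I then apply Lemmas \ref{lemma:affinefibration} and \ref{lemma:affinefibrationodd}: after perfect base change, $\la_{b,h}^{\mathrm{symp}}\to\Lsympbar$ is a trivial affine-space bundle, with canonical trivialization $\la_{h}\simeq\overline{V}\times\la'_{h}$ from Remark \ref{remark:Lh}. Choosing $h$ large enough that the non-leading column lattice lies inside $\la'_{h}$, the quotient of $\la_{b,h}^{\mathrm{symp}}$ by the non-leading part of $\sim_{b,0,r}$ collapses a sub-affine-space of the fiber, leaving a residual affine bundle together with the scalar action of $(\co/\p^{h})^{\times}$ on the first column. Writing $(\co/\p^{h})^{\times}=\overline{\F}_{q}^{\times}\times(1+\p/\p^{h})$, after perfection the pro-$p$ factor acts freely with affine-space quotient on the fibers, while the Teichm\"uller $\overline{\F}_{q}^{\times}$ matches the scalar equivalence $\sim$ on $\Lsympbar$. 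Lemmas \ref{lemma:finitecomparison} and \ref{lemma:finitecomparisonodd} then identify $\Lsympbar/\overline{\F}_{q}^{\times}\simeq X_{\overline{w}}^{(\overline{B})}=X_{\overline{w}}^{\overline{B}}$, and the splitting into a product of the base and the affine fiber follows from the trivialization of Remark \ref{remark:Lh}.

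The main obstacle I expect is the careful matching of the equivalence $\sim_{b,0,r}$ with the fiber structure of $\la_{b,h}^{\mathrm{symp}}\to\Lsympbar$: one must verify that the linear combinations of $\varpi^{ir}F^{i}(v_{2})$ and $\varpi^{jr}G_{j}(v_{2})$ appearing in the equivalence span precisely the affine fiber directions, neither missing any nor colliding with the base-direction coordinates that parameterize $\Lsympbar$. This requires the explicit expressions for $F^{i}(v_{2})$ and $G_{j}(v_{2})$ together with the Coxeter-type position of $b$, and the linear-independence statements of Remark \ref{remark:linindep}. The boundary case $k=1$, $r=0$ (where the positivity $r+k\geq 1$ is barely satisfied) is particularly delicate, since some of the non-leading columns of $g_{b,r}(v_{2})$ lie only in $F(\la)$ rather than $\varpi\la$, and the analysis must use the finer lattice structure defining $\Lsympbar$ in that case.
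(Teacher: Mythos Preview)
Your overall strategy---reduce to $\Lsymp/\sim_{b,0,r}$, push down to a finite truncation $\la_{b,h}^{\mathrm{symp}}$, and invoke the affine-fibration Lemmas over $\Lsympbar\simeq X_{\overline{w}}^{\overline{B}}$---is natural, but there is a genuine gap at the quotient step, and the paper in fact takes a rather different route to avoid it.

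The gap is the sentence ``the quotient of $\la_{b,h}^{\mathrm{symp}}$ by the non-leading part of $\sim_{b,0,r}$ collapses a sub-affine-space of the fiber.'' By Proposition~\ref{proposition:bmr}, the equivalence class of $y$ inside the fiber over $\bar y\in\Lsympbar$ is $y+L_y$ with $L_y=\p\cdot y+\sum_{i\geq 2}\co\cdot(\text{$i$-th column of }g_{b,r}(y))$. The lattice $L_y$ depends on $y$, not merely on $\bar y$: the column $\varpi^{ir}F^{i}(y)$ modulo $\la^{(h)}$ depends on $y$ modulo $F^{-i}(\la^{(h)})$, which is a \emph{different} truncation level than $\la^{(h)}$. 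So $\sim_{b,0,r}$ does not obviously descend to a translation action by a fixed sub-affine-space on any single $\la_{b,h}^{\mathrm{symp}}$, and the quotient you describe is not a priori a scheme, let alone an affine space. Relatedly, Theorem~\ref{theorem:comparison} and Remark~\ref{remark:morphismadlv} give only a bijection on $\overline{k}$-points (and transition maps), not a scheme-theoretic identification of $X^{0}_{w_r}(b)_{\la}$ with $\Lsymp/\sim_{b,0,r}$; you are using that identification as if it were already established.

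The paper circumvents this by never forming the quotient. Instead it introduces a finite \emph{coordinate model} $\laczero$ (Definition~\ref{definition:Lbmr}), in which different entries are truncated to different depths $\gamma_{0,r}(i)$ precisely to absorb the $F^{i}$-shifts above, together with an explicit map $h_{b,r}\colon\laczero\to X^{0}_{w_r}(b)_{\la}$. The key technical input you are missing is the Schubert-cell containment $X^{0}_{w_r}(b)_{\la}\subset Ix_rI/I$ (Proposition~\ref{prop:cell containment}): using the explicit cell coordinates one shows that $h_{b,r}$ is an isomorphism of schemes (Proposition~\ref{prop:strbylac}), with the first-column projection matching $\laczero$ to those coordinates. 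Only after this identification does the paper apply Lemmas~\ref{lemma:affinefibration}/\ref{lemma:affinefibrationodd} (plus the extra equations~(\ref{eq:deflac})) to the projection $\laczero\to\mathbb{P}(\Lsympbar)^{\mathrm{perf}}\simeq X_{\overline{w}}^{\overline{B},\mathrm{perf}}$. Your instinct about the obstacle is correct, but resolving it requires exactly the multi-level truncation and Schubert-cell input that your outline omits.
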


\begin{remark}
\label{remark:reweilrestriction}
When $n$ is even and $k=1$, the Deligne--Lusztig variety $X_{\overline{w}}^{\overline{B}}$ is isomorphic to the Deligne--Lusztig variety of $\overline{G}_{0}:= \Res_{\F_{q^{2}} / \F_{q}} \GSp_{n, \F_{q^{2}}}$ with respect to some $\sigma_{\overline{G}_{0}}$-Coxeter element $\overline{w}_{0} \in \overline{G}_{0} (\overline{\F}_{q}) = \GSp (\overline{V}) \times \GSp(\overline{V})$ which corresponds to $\overline{w} \times 1$. 
\end{remark}

In the following of this section, we assume that $b$ is the special representative with $\kappa (b) =k$ (by Proposition \ref{prop:component}.(3), the Coxeter representative case is reduced to this case).
Since $X^{m}_{w_{r}} (b)_{\la}$ is contained in an affine Schubert cell, we can calculate the structure of $X^{m}_{w_{r}} (b)_{\la}$ directly, by using the coordinates of affine Schubert cells (\cite[Lemma 4.7]{Chan2021}).
We want to study such coordinates by using the results on the structure of $\Lsymp$.
However, since representatives of such coordinates have $\varpi$-adic extensions of finite length, our $\Lsymp$ is not suitable (note that each entry of $v \in \Lsymp$ has $\varpi$-adic expansion of infinite length).
Therefore, first, we will define a finite-length analogue of $\Lsymp$ in the following.

\begin{definition}
\label{definition:Lbmr}
\begin{enumerate}
\item
We define 
\[
\alpha(i), \beta_{m,r}(i), \gamma_{m,r} (i) \colon \{1, \ldots, 2n\} \rightarrow \Z
\]
by the following:
\begin{align*}
&(\alpha(1), \ldots, \alpha(n), |\alpha(n+1), \ldots, \alpha(2n))\\
=&
\begin{cases}
(0, \ldots, 0, | 1, 0, \ldots, 1, 0)   &\textup{ if } n \textup{ is even and } k=1,\\
(0, \ldots, 0, | 0, \ldots, 0)  &\textup{ otherwise}.
\end{cases} 
\end{align*}
\begin{align*}
&(\beta_{m,r}(1), \ldots, \beta_{m,r}(n),| \beta_{m,r}(n+1), \ldots, \beta_{m,r}(2n))\\
=&
\begin{cases}
(m+r, \ldots, m+r, | m+r, \ldots, m+r)   &\textup{ if } k=0,\\
(m+r+1, m+r, \ldots,  m+r+1, m+r, | m+r+1, \ldots, m+r+1)  &\textup{ if } n \textup{ is even and }k=1,\\
(m+r+1, m+r, \ldots, m+r+1, m+r) & \textup{ if } n \textup{ is odd and }k=1,                     
\end{cases} 
\end{align*}
\begin{align*}
&(\gamma_{m,r}(1), \ldots, \gamma_{m,r}(n),| \gamma_{m,r}(n+1), \ldots, \gamma_{m,r}(2n))\\
= & 
(m+ \varphi_{r} (1), \ldots, m+ \varphi_{r} (2n)).
\end{align*}
\item
We define $\la'_{b,m,r}$ as the projection of 
\[
\Lsymp \cap ^{t}(1, \p^{\alpha(2)}, \ldots, \p^{\alpha(2n)})
\]
via
\[
^{t}\!(1, \p^{\alpha(2)}, \ldots, \p^{\alpha(2n)}) \twoheadrightarrow 
^{t}\!(1, \p^{\alpha(2)}/ \p^{\beta_{m,r}(2)}, \ldots, \p^{\alpha(2n)}/ \p^{\beta_{m,r}(2n)}).
\]
We define $\la_{b,m,r}$ by the inverse image of $\la'_{b,m,r}$ via the projection
\[
^{t}\!(1, \p^{\alpha(2)}/ \p^{\gamma_{m,r}(2)}, \ldots, \p^{\alpha(2n)}/\p^{\gamma_{m,r}(2n)})
\twoheadrightarrow
^{t}\!(1, \p^{\alpha(2)}/ \p^{\beta_{m,r}(2)}, \ldots, \p^{\alpha(2n)}/ \p^{\beta_{m,r}(2n)}).
\]
Note that $\gamma_{m,r} (i) \geq \beta_{m,r} (i)$ for $2 \leq i \leq 2n$.
\item
We define $[\la_{b,m,r}] \subset V$ by 
\[
[\la_{b,m,r}] :=
\{
^{t}\!([v_{1}], \ldots, [v_{2n}]) \in \la | ^{t}\!(v_{1}, \ldots, v_{2n}) \in \la_{b,m,r}
\}.
\]
Here, we define the map 
\[
[-] \colon \p^{i}/ \p^{j} \rightarrow \co
\]
for $j> i \geq 0$ by 
\[
[a] :=  \sum_{l=i}^{j-1} [a_{l}]\varpi^{l}.
\]
Here, $a_{l} \in \overline{k}$ is defined by the formula
\[
\widetilde{a} := \sum_{l\geq i} [a_{l}]\varpi^{l},
\]
where $\widetilde{a} \in \p^{i}$ is any lift of $a$, and $[a_{l}] \in W(\overline{k}) \subset K$ is the Teichm\"{u}ller lift of $a_{l}$.

\item
Let $\lac \subset [\la_{b,m,r}]$ be the subset consisting of $v \in [\la_{b,m,r}]$ such that
\begin{equation}
\label{eq:deflac}
\langle v, F^{i} (v) \rangle \in \p^{(n-i)r+ m+ \lfloor \frac{nk}{2} \rfloor}
\end{equation}
hold for $1 \leq i \leq n-1$.
\end{enumerate}
\end{definition}

\begin{remark}
\label{remark:lcoord}
\begin{enumerate}
\item
The set $\lac$ have a natural $\overline{k}$-scheme structure for the same reason as in Remark \ref{remark:Lh}.
\item
When $n=1$, then we have 
\[
 \beta_{m,r} (2) = \gamma_{m,r} (2) = m+r.
\]
In this case, we have $\la'_{b,m,r} = \la_{b,m,r}$.
\item
The equation (\ref{eq:deflac}) is automatic when $n=1$ or $n=2$.
Therefore, in this case, we have
\[
\lac = [\la_{b,m,r}].
\]
Combining with the remark (1), we have
\[
\lac = [\la'_{b,m,r}]
\]
when $n=1$.
This is no other than $``U''$ used in the proof of \cite[Theorem 6.17]{Chan2021}.
\end{enumerate}
\end{remark}

\begin{definition}
We define the map 
\[
h_{1}, \ldots, h_{2n} \colon \lac \rightarrow V
\]
as follows:
Let $v \in \lac$.
First, we put 
\begin{align*}
h_{1}(v) &:= v, \\
h_{2n}(v) &:= F^{n} (v).
\end{align*}
Next, we inductively define 
\begin{align*}
h_{n+1-i} (v) := &F^{n-i} (v)  -  \frac{\langle h_{2n} (v), F^{n-i} (v) \rangle}{\langle h_{2n}(v), h_{1} (v) \rangle} h_{1}(v) - 
\frac{\langle h_{1}(v),  F^{n-i} (v) \rangle}{\langle h_{1}(v), h_{2n} (v) \rangle} h_{2n} (v)
\\
&- \sum_{n+2-i \leq j \leq n} \left(\frac{\langle h_{2n+1-j}(v), F^{n-i}(v) \rangle}{\langle h_{2n+1-j}(v), h_{j} (v)  \rangle} h_{j} (v)  
+ \frac{\langle h_{j} (v), F^{n-i}(v) \rangle}{\langle h_{j}(v), h_{2n+1-j} (v) \rangle} h_{2n+1-j}(v) \right),   \\
\end{align*}
\begin{align*}
h_{n+i} (v) := & V_{k}^{i} (v)- \frac{\langle h_{2n} (v), V_{k}^{i}(v) \rangle}{\langle h_{2n}(v), h_{1}(v) \rangle} h_{1} (v) - \frac{\langle h_{1}(v), V_{k}^{i}(v) \rangle}{\langle h_{1}(v), h_{2n} (v) \rangle}h_{2n} (v)
\\
&- \sum_{n+2-i \leq j \leq n} \left(
\frac{\langle h_{2n+1-j}(v), V_{k}^{i} (v) \rangle}{\langle h_{2n+1-j}(v), h_{j}(v) \rangle} h_{j} (v)
+ 
\frac{\langle h_{j}(v), V_{k}^{i} (v) \rangle}{\langle h_{j} (v), h_{2n+1-j}(v) \rangle} h_{2n+1-j} (v)
\right),
\end{align*}
for $1 \leq i \leq n-1$.
Note that we have
\begin{equation}
\label{equation:orthogonal}
\langle h_{i} (v) , h_{j} (v) \rangle =0
\end{equation}
when $i+j \neq 2n+1$.
\end{definition}

The next lemma is an analogue of Lemma \ref{lemma:order}.

\begin{lemma}
\begin{enumerate}
\item
For $v \in \lac$, we have
\[
 \ord \langle h_{i} (v), h_{2n+1-i} (v)  \rangle = \ord  \langle v, F^{n} (v) \rangle = \lfloor \frac{kn}{2} \rfloor.
\]
We put 
\begin{align*}
h_{b,r} (v)&:= (h_{1} (v), \varpi^{r} h_{2} (v), \ldots, \varpi^{(n-1)r} h_{n} (v), \\
&
\left.
(-1)^{n+1}\varpi^{r}
\frac{\langle h_{1}(v), h_{2n}(v) \rangle}
{\langle h_{n}(v), h_{n+1}(v)  \rangle} 
h_{n+1} (v),
(-1)^{n+2}\varpi^{2r}
\frac{\langle h_{1}(v), h_{2n}(v)\rangle}{\langle h_{n-1}(v),h_{n+2}(v) \rangle}h_{n+2}(v), \ldots, 
\varpi^{nr} h_{2n} (v)
\right).
\end{align*}
\item
We have
\[
F(h_{b,r} (v) )
= h_{b,r} (v) w_{r} A_{b,r}
\]
with
\[
A_{b,r} \in I^{m}.
\]
\end{enumerate}
\end{lemma}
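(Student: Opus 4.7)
The plan is to prove the two parts in order, treating part (1) as the central technical statement and deriving part (2) as an analogue of Lemma \ref{lemma:order} applied to the orthogonalized basis $h_1(v),\ldots,h_{2n}(v)$. In both parts, the orthogonality identity (\ref{equation:orthogonal}) already built into the Gram--Schmidt definition will be used freely, so the real content of (1) is the $\varpi$-adic order of the anti-diagonal pairings.

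For part (1), I would argue by downward induction on $i$, showing $\ord\langle h_{n+1-i}(v),h_{n+i}(v)\rangle = \lfloor kn/2\rfloor$ for $i = 0,1,\ldots,n-1$. The base case $i=0$ is immediate from the condition $\langle v,F^n(v)\rangle\in\varpi^{\lfloor kn/2\rfloor}\co^{\times}$ inherited from $\Lsymp$ via the definition of $\lac$. For the inductive step, I first use (\ref{equation:orthogonal}) to collapse $\langle h_{n+1-i}(v),h_{n+i}(v)\rangle$ to $\langle h_{n+1-i}(v),V_k^i(v)\rangle$, and then expand
\[
\langle h_{n+1-i}(v),V_k^i(v)\rangle = \langle F^{n-i}(v),V_k^i(v)\rangle - \sum_{j\in J}\mu_j\langle h_j(v),V_k^i(v)\rangle,
\]
where $J = \{1,2n,n+2-i,\ldots,n+i-1\}$ and the $\mu_j$ are the Gram--Schmidt coefficients appearing in $h_{n+1-i}(v)$. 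Using $V_k=\varpi^k F^{-1}$ together with $\langle F(x),F(y)\rangle = -\varpi^k\sigma(\langle x,y\rangle)$, the leading term evaluates to $(-1)^{i+1}\sigma^{-i}(\alpha_v)$, of order $\lfloor kn/2\rfloor$. The correction pairings $\langle h_j(v),V_k^i(v)\rangle$ I would unfold into linear combinations of $\langle v, F^l(v)\rangle$ with $l<n$, then bound by $\lfloor kn/2\rfloor + 1$ using the defining inequalities of $\lac$ and the Kedlaya slope estimate \cite[Lemma 5.2.4]{Kedlaya2005} already used in Lemma \ref{lemma:order}.

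For part (2), the computation mirrors Lemma \ref{lemma:order}. With part (1) in hand, the basis $\{h_i(v)\}$ is anti-diagonally non-degenerate with all pairings $\langle h_i(v),h_{2n+1-i}(v)\rangle$ of order $\lfloor kn/2\rfloor$, so for any $u\in V$ the coefficient of $h_i(v)$ in its expansion is $\langle u,h_{2n+1-i}(v)\rangle/\langle h_i(v),h_{2n+1-i}(v)\rangle$. Applying this to $u = F(h_j(v))$ for each $j$, incorporating the $\varpi$-weights in the columns of $h_{b,r}(v)$, and pulling out the shift by $w_r$ produces $A_{b,r}$ explicitly. The diagonal entries of $A_{b,r}$ come out to $1$ in the upper block and $\sigma(\alpha_v)/\alpha_v$ in the lower block, exactly as in Lemma \ref{lemma:order}. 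To verify that the off-diagonal entries of $A_{b,r}$ lie in $\p^{m+1}$ above the diagonal and $\p^m$ below, one estimates each $\langle F(h_j(v)),h_{2n+1-i}(v)\rangle$ using the defining bound $\ord\langle v,F^l(v)\rangle\ge(n-l)r+m+\lfloor kn/2\rfloor$ satisfied on $\lac$, together with the Kedlaya estimate and the $\varpi$-normalizations of the columns.

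The principal obstacle is the bookkeeping in part (1): each $h_j(v)$ is itself an iterated Gram--Schmidt combination of $F^a(v)$'s, so the correction terms $\langle h_j(v),V_k^i(v)\rangle$ unpack into a nested sum of pairings $\langle v,F^l(v)\rangle$ whose orders must be estimated uniformly across $j$, $l$, and $i$. Once this uniform bound is in place, the passage from (1) to (2) and the verification of the $I^m$-congruences are a direct matrix computation parallel to Lemma \ref{lemma:order}.
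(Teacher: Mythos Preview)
Your strategy is sound, but it diverges from the paper's in both parts, and it is worth knowing what the paper does differently.

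For part (1), the paper does not unfold the Gram--Schmidt recursion into nested sums of pairings $\langle v,F^l(v)\rangle$. Instead it writes any $v\in\lac$ as $v=v_0+\varpi^{r+m}u$ with $v_0\in\Lsymp$, so that the pairing $\langle v,F^n(v)\rangle$ reduces modulo high powers of $\varpi$ to $\langle v_0,F^n(v_0)\rangle$, giving the order $\lfloor kn/2\rfloor$ immediately. For the other $\langle h_i(v),h_{2n+1-i}(v)\rangle$, the paper introduces normalized vectors $h_i^{\mathrm{red}}(v)$ and auxiliary lattices $M_i\subset\la$, and shows by direct estimate that $h_i^{\mathrm{red}}(v)\equiv \varpi^{-\lfloor i/2\rfloor}F^i(v)\pmod{M_i}$ (for $1\le i\le n$). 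Since $M_i$ lies in a fixed power of $\varpi$ times $\la$, this pins down the order of the anti-diagonal pairing without any recursion. Your inductive scheme is correct in principle, but the paper's lattice-comparison avoids exactly the nested bookkeeping you flag as the principal obstacle.

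For part (2), the paper does not compute $A_{b,r}$ by expanding $F(h_j(v))$ in the $h_i(v)$-basis. Instead it introduces the non-orthogonalized matrix $\widetilde{h}_{b,r}(v)$ with columns $F^{i-1}(v)$, $V_k^{j}(v)$, $F^n(v)$, writes $h_{b,r}(v)=\widetilde{h}_{b,r}(v)H$, and reduces the claim to two separate estimates: (a) $w_r^{-1}\widetilde{h}_{b,r}(v)^{-1}F(\widetilde{h}_{b,r}(v))\in I^m_{\GL}$, which follows from a short computation parallel to Lemma~\ref{lemma:order}, and (b) $H\in I^m_{\GL}\cap w_r I^m_{\GL} w_r^{-1}$, which follows from explicit containments (\ref{eqn:hredcomparison1})--(\ref{eqn:hredcomparison2}) expressing each $h_i^{\mathrm{red}}$ in terms of the $\widetilde{h}_j^{\mathrm{red}}$ with controlled coefficients. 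This factorization decouples the two sources of complexity (the action of $F$ and the Gram--Schmidt change of basis), whereas your direct expansion entangles them and forces you to estimate each entry of $A_{b,r}$ individually.

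Both routes lead to the result; the paper's is less computational because it systematically compares to the simpler $\widetilde{h}_{b,r}$ rather than working intrinsically with the orthogonalized basis.
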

\begin{proof}
Note that any $v\in [\la_{b,m,r}]$ can be written as $v= v_{0} + \varpi^{r+m} u$, where $v_{0} \in \Lsymp$ and 
\[
u \in ^{t}\!(0, \p^{\beta_{m,r}(2)-m-r}, \ldots, \p^{\beta_{m,r}(2n)-m-r}) \subset 
^{t}\!(\p^{\beta_{m,r}(1)-m-r}, \p^{\beta_{m,r}(2)-m-r}, \ldots, \p^{\beta_{m,r}(2n)-m-r})
.
\]
We have
\[
\ord \langle v, F^{n} (v) \rangle 
= \langle v_{0}, F^{n} (v_{0}) \rangle + \varpi^{r+m} ( \langle v_{0}, F^{n} (u) \rangle  + \langle u, F^{n} (v_{0}) \rangle) + \varpi^{2(r+m)} (\langle u, F^{n} (u) \rangle).
\]
Since $v_{0} \in \la$, we have
\[
\ord \langle v_{0}, F^{n} (u) \rangle 
\,(\textup{resp}.\,\langle u, F^{n} (v_{0})  \rangle \textup{ and } \langle u, F^{n} (u) \rangle) \geq \lfloor \frac{kn}{2} \rfloor +k
\]
by the direct calculation of orders.
Note that
we have $r \geq 1$ in the case where $k=0$.
Therefore, we can conclude that
\[
\langle v, F^{n}(v) \rangle = \lfloor \frac{kn}{2} \rfloor.
\]
To compute $\ord  \langle h_{i} (v), h_{2n+1-i}(v) \rangle$, it is useful to introduce the reduced version of $h_{i}$ as {\cred follows}.
We put 
\[
h_{i}^{\mathrm{red}} (v) = 
\begin{cases}
\varpi^{- \lfloor \frac{k(i-1)}{2} \rfloor} h_{i} (v)  & \textup{ if } 1 \leq i \leq n, \\
\varpi^{- \lfloor \frac{kn}{2} \rfloor + \lfloor \frac{k(2n-i)}{2} \rfloor}   & \textup{ if } n+1 \leq i \leq 2n. \\
\end{cases}
\]
It is enough to show that 
\begin{equation}
\label{equation:goal}
\ord \langle h_{i}^{\mathrm{red}}(v), h_{2n+1-i}^{\mathrm{red}}(v) \rangle = 0.
\end{equation}
For $1 \leq i \leq 2n$, we define the subset $M_{i} \subset \la$ by the formula
\[
\begin{cases}
M_{i} = \varpi^{m+r} F^{\overline{i-1}+1} \la  & \textup{ if } 1 \leq i \leq n,\\
M_{i} = \varpi^{m+r} F^{\overline{n} -\overline{2n-i}}  \la    & \textup{if} n+1 \leq i \leq 2n.\\
\end{cases}
\]
Here, for any integer $j \in \Z$, $\overline{j} \in \{0,1\}$ means its mod $2$.
By the direct computation according to the definition of $h_{i}^{\mathrm{red}}$ and (\ref{equation:orthogonal}), we can show 
\[
\begin{cases}
h_{i}^{\mathrm{red}} (v) -  \varpi^{-\lfloor \frac{i}{2} \rfloor} F^{i} (v) \in M_{i} & \textup{ if }  1\leq i \leq n,  \\
h_{i}^{\mathrm{red}} (v) - \varpi^{- \lfloor \frac{n}{2} \rfloor} F^{n} (v)  \in M_{i}   & \textup{ if } i=2n. \\
\end{cases}
\]
By the definition of  $M_{i}$ and (\ref{equation:orthogonal}), these imply (\ref{equation:goal}). {\cred This} finishes the proof of the assertion (1).

Next, we will prove the assertion (2).
This calculation is rather redundant, so we will only describe a sketch.
We only consider the case where $k=1$ and $n$ is even.
Since we only need to estimate the order of entries, we will proceed with the calculation ignoring $\co^{\times}$-multiplications.
We define $\widetilde{h}_{b,r} (v)$ by 
\begin{equation}
\label{eqn:tildeh}
\widetilde{h}_{b,r} (v) :=(\hpr_{1}, \ldots, \hpr_{2n}),
\end{equation}
where
\begin{equation}
\hpr_{i} :=
\label{eqn:tildehi}
\begin{cases}
F^{i-1} (v)
& \textup{ if } 1\leq i \leq n,\\
V_{k}^{i-n} (v)
& \textup{ if } n+1 \leq i \leq 2n-1,\\
F^{n} (v) 
& \textup{ if } i=2n.
\end{cases}
\end{equation}
We also put
\begin{align}
\begin{aligned}
\hpr^{\mathrm{red}}_{b,r} (v) = (\hpr^{\mathrm{red}}_{1}, \ldots, \hpr^{\mathrm{red}}_{2n} ) =& \hpr_{b,r} (v) \cdot \diag(1,1, \varpi^{-1}, \varpi^{-1}, \ldots, \varpi^{-(n'-1)}, \varpi^{-(n'-1)}, |  \\
&\varpi^{-1}, \varpi^{-1}, \ldots, \varpi^{-n'}, \varpi^{-n'}).
\end{aligned}
\end{align}

We define the equations $s \colon \Z^2 \to \{0,1\}$ by 
$s(i,j)$ is 1 (resp.\,0) if $i$ and $j$ are odd (resp.\,otherwise).
First,
for $1 \leq i \leq n-1$,
we can show that
\begin{eqnarray}
\begin{aligned}
\label{eqn:hredcomparison1}
h_{n+1-i}^{\red} &\in \hpr^{\mathrm{red}}_{n+1-i}
 + \p^{(n-i)r+m+ \frac{n}{2}- \lfloor \frac{i}{2} \rfloor} \hpr_{1}
 + \p^{ir +m+ \frac{n}{2} - \lfloor \frac{n-i}{2} \rfloor} \hpr_{2n} \\
&+ \sum_{j=1}^{i-1} \p^{(i-j)r + m + \frac{n}{2} - \lfloor \frac{n-i+j}{2} \rfloor - \overline{n-j} + s(j,n-i)} \hpr_{n-j+1} \\ 
&+ \sum_{j=1}^{i-1} \p^{(n-i+j)r +m + \frac{n}{2} - \lfloor \frac{i-j}{2} \rfloor} \hpr_{n+j}.\\
\end{aligned}
\end{eqnarray}
and
\begin{eqnarray}
\begin{aligned}
\label{eqn:hredcomparison2}
h_{n+i}^{\red} (v) &\in \hpr^{\mathrm{red}}_{n+i} 
 + \co v +
 \p^{(n-i)r+m+ \frac{n}{2} - \lceil \frac{i}{2} \rceil} 
 \hpr^{\mathrm{red}}_{2n}\\
 &+ \sum_{j=1}^{i-1} \p^{(n-i)r + m+ \frac{n}{2} - \lceil \frac{i}{2} \rceil} \hpr^{\mathrm{red}}_{n-j+1}\\
 &+ \sum_{j=1}^{i-1} \p^{s(n-j, i+1)} \hpr^{\mathrm{red}}_{n+j}.\\
 \end{aligned}
\end{eqnarray}
Indeed, we can show these inclusions by the induction on $i$ by using the definition of $h_{i}^{\mathrm{red}}$ and equations (\ref{eq:deflac}).
We can define the matrix $H \in \GL_{2n} (\breve{K})$ by
\[
h_{b,r} (v) = \hpr_{b,r} (v) H.
\]
Since we have
\[
w_{r}^{-1} h_{b,r} (v)^{-1} F(h_{b,r} (v))
 = w_{r}^{-1}  H^{-1} w_{r} w_{r}^{-1} \hpr_{b,r} (v)^{-1} F(\hpr_{b,r} (v)) \sigma (H),
\]
it suffices to show the following.
{\cred
\begin{eqnarray*}
\hpr_{b,r} (v)^{-1} F(\hpr_{b,r} (v)) \in I^{m}_{\GL}, \\
H \in I^{m}_{\GL} \cap w_{r} I^{m}_{\GL} w_{r}^{-1}.
\end{eqnarray*}}
First, we prove the former {\cred containment}.
By the direct computation similar to Lemma \ref{lemma:order},
we have the following.
\[
\hpr_{b,r} (v)^{-1} F (\hpr_{b,r} (v)) \in
\left(\begin{array}{ccc|ccc}
1&&&&& a_{1}\\
&\ddots&&\Lsymb{0}&&\vdots\\
&&1&&&a_{n}\\ \hline
&&&1&&a_{n+1}\\
&\hsymb{0}&&&\ddots&\vdots\\
&&&&&a_{2n}\\
\end{array}\right).
\]
Here, the diagonal entries are $1$ except for the $2n$-th row. Moreover, we have the following.
\begin{enumerate}
\item
 For $1 \leq i \leq n$, we have
 \[
\ord \delta_{i} \geq
rn+ \frac{k}{2} (n-i+1).
\]
\item
For $n+1 \leq i\leq 2n-1$. we have
\[
\ord \delta_{i} \geq (2n-i) r + (n- \frac{i}{2} )k.
\]
\item
We have $ \ord a_{2n} =0$.
\end{enumerate}
In particular, we have the former inclusion.

Next, we show the {\cred latter} {\cred containment}.
By using (\ref{eqn:hredcomparison1}) and (\ref{eqn:hredcomparison2}), we can show that
\[
H \in
\left(
\begin{array}{cccc|cccc}
1&\ast& \cdots&\ast&\ast&\cdots&\ast&0 \\
0&1&&&&&0&0 \\
\vdots&&\ddots&&&\iddots&&\vdots \\
0&\hsymb{\ast} && 1 &0&&\hsymb{\ast} &  0 \\ \hline
0 & &&0&1&&&0  \\
\vdots & \hsymb{\ast}&\iddots&&&\ddots& \hsymb{\ast}&\vdots \\
0 &0&&&&&1&0 \\
0 &\ast& \cdots&\ast& \ast& \cdots& \ast& 1 \\
\end{array}
\right).
\]
Here, since we only need to focus on the order of entries, we ignore multiplications by units.
By unwinding equations (\ref{eqn:hredcomparison1}) and (\ref{eqn:hredcomparison2}), we {\cred obtain} estimates for the orders of the entries of $H$, {\cred and thus obtain} the desired inclusion. 
{\cred This} finishes the proof.
\end{proof}

\begin{lemma}
\label{lemma:equivlac}
Let $v \in \lac$ and $u \in \Lsymp$.
We assume that we have
\[
v \in g_{b,r} (u) ^{t}\!(\co^{\times}, \p^{m}, \ldots, \p^{m}).
\]
Then we have
\[
h_{b,r} (v) I^{m} = g_{b,r} (u) I^{m}.
\]
\end{lemma}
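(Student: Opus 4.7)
The plan is to verify, column by column, that $g_{b,r}(u)^{-1} h_{b,r}(v)$ lies in $I^m$, closely paralleling the proof of Proposition~\ref{proposition:bmr}. First, the hypothesis says $v = a_1 u + \sum_{i\ge 2} a_i (g_{b,r}(u))_i$ with $a_1 \in \co^{\times}$ and $a_i \in \p^m$. Replacing $u$ by $a_1 u$ changes $g_{b,r}(u)$ by right multiplication by a diagonal matrix with unit entries on the diagonal, which lies in $I^m$; hence we may assume $a_1 = 1$, so $v \equiv u$ modulo the $\p^m$-span of the other columns of $g_{b,r}(u)$. Since $h_1(v) = v$, the first column of $g_{b,r}(u)^{-1} h_{b,r}(v)$ has the form ${}^{t}\!(\co^{\times}, \p^m, \ldots, \p^m)$ automatically.

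For the $i$-th column with $2\le i\le n$, apply the formula (\ref{eq:F1}) (with $y$ replaced by $u$) to propagate the approximation: since $r + k \ge m+1$, the entries of $A_{b,r}$ from Lemma~\ref{lemma:order} lie in $\p^{m+1}$, so iterating $\varpi^r F$ a total of $(i-1)$ times yields
\[
\varpi^{(i-1)r} F^{i-1}(v) \in g_{b,r}(u) \cdot {}^{t}\!(\p^{m+1}, \ldots, \co^{\times}, \ldots, \p^m)
\]
with the unit in the $i$-th slot, exactly as in Proposition~\ref{proposition:bmr}. For the $(n+i)$-th column with $1\le i\le n-1$, the analogous computation using (\ref{eq:F2}) gives the corresponding approximation of $\varpi^{ir} V_k^i(v)$; the final column is covered by one more iteration of (\ref{eq:F1}) applied to $\varpi^{(n-1)r} F^{n-1}(v)$.

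The main technical point, and the new ingredient beyond Proposition~\ref{proposition:bmr}, is to control the Gram--Schmidt corrections entering the definition of $h_i(v)$. Each correction has the shape $\frac{\langle h_j(v), F^{n-i}(v)\rangle}{\langle h_j(v), h_{2n+1-j}(v)\rangle} h_{2n+1-j}(v)$, or the analogous $V_k$-version. The preceding lemma gives $\ord\langle h_j(v), h_{2n+1-j}(v)\rangle = \lfloor kn/2\rfloor$, while the defining inequalities (\ref{eq:deflac}) of $\lac$ bound $\langle v, F^s(v)\rangle$ for $s < n$ by $\p^{(n-s)r + m + \lfloor kn/2\rfloor}$. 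Propagating these bounds inductively to all the inner products $\langle h_j(v), F^{n-i}(v)\rangle$ and $\langle h_j(v), V_k^i(v)\rangle$, using that the auxiliary columns $h_{2n+1-j}(v)$ are themselves built from $F^\ast(v)$ and $V_k^\ast(v)$ with already-bounded corrections, one checks by a direct order count that each correction term contributes only $\p^m$-small entries to the columns of $g_{b,r}(u)^{-1} h_{b,r}(v)$. Combined with the propagation estimates above, this yields $g_{b,r}(u)^{-1} h_{b,r}(v) \in I^m$, which is the desired identity $h_{b,r}(v) I^m = g_{b,r}(u) I^m$.
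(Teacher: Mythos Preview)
Your argument is correct in spirit and follows essentially the same strategy as the paper, but the paper organizes it more cleanly by factoring through the auxiliary matrix $\hpr_{b,r}(v)$ introduced in the preceding lemma. Concretely, the paper observes that $h_{b,r}(v) = \hpr_{b,r}(v) H$ with $H \in I^m_{\GL} \cap w_r I^m_{\GL} w_r^{-1}$ (this is exactly the content of the order estimates (\ref{eqn:hredcomparison1})--(\ref{eqn:hredcomparison2}) established there), so $h_{b,r}(v) I^m = \hpr_{b,r}(v) I^m$. Then the propagation argument of Proposition~\ref{proposition:bmr} applied verbatim to the columns $F^{i-1}(v)$ and $V_k^{i}(v)$ of $\hpr_{b,r}(v)$ gives $\hpr_{b,r}(v) I^m = \hpr_{b,r}(u) I^m$, and a second change-of-basis $H' \in I^m$ returns from $\hpr_{b,r}(u)$ to $g_{b,r}(u)$.

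Your approach merges these two steps: you first run the Proposition~\ref{proposition:bmr} propagation for the ``bare'' vectors $\varpi^{(i-1)r}F^{i-1}(v)$ and $\varpi^{ir}V_k^i(v)$, and then argue separately that the Gram--Schmidt corrections in $h_i(v)$ are small. That second part --- your ``direct order count'' --- is precisely what the paper packaged as $H \in I^m$ in the previous lemma, so you are re-deriving it rather than citing it. One point to be careful about: saying the corrections are ``$\p^m$-small'' is not quite enough, since membership in $I^m$ requires $\p^{m+1}$ above the diagonal; the sharper bound $H \in I^m_{\GL} \cap w_r I^m_{\GL} w_r^{-1}$ is what guarantees this, and your inductive order count needs to track the extra $+1$ in the relevant positions. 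With that refinement your argument goes through.
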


\begin{proof}
As before, we only consider the case where $k=1$ and $n$ is even.
By the proof of Lemma \ref{lemma:order}, we have
$h_{b,r} (v) I^{m} = \hpr_{b,r} (v) H I^{m}  = \hpr_{b,r} (v) I^{m}$.
In the same argument as in the proof of Proposition \ref{proposition:bmr},
we have
$
\hpr_{b,r} (v) I^{m} = \hpr_{b,r} (u) I^{m}.
$
Here, we define $\hpr_{b,r}$ on $\Lsymp$ by the same formula as in (\ref{eqn:tildeh}) and (\ref{eqn:tildehi}).
Moreover, if we define $H' \in \GL (V)$ by
\[
h_{b,r} (u) = \hpr_{b,r}(u) H',
\]
then we have $H' \in I^{m}$.
Now we have
\[
\hpr_{b,r} (u) I^{m} = h_{b,r} (u) I^{m},
\]
and {\cred this} finishes the proof.
\end{proof}

\begin{proposition}
\label{prop:strbylac}
\begin{enumerate}
\item
The map $h_{b,r}$ defines a morphism
\[
\laczero \rightarrow X_{w_{r}}^{0} (b)_{\la}
\]
 of schemes over $\overline{\F}_{q}$
\item
The above morphism is an isomorphism.
\end{enumerate}
\end{proposition}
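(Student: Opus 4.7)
The plan is to first check that $v \mapsto h_{b,r}(v) I^{0}$ yields a well-defined morphism $\laczero \to X^{0}_{w_{r}}(b)_{\la}$ of $\overline{\F}_{q}$-schemes, and then to verify bijectivity together with the existence of a scheme-theoretic inverse.

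For part (1), the preceding lemma shows $F(h_{b,r}(v)) = h_{b,r}(v) w_{r} A_{b,r}$ with $A_{b,r} \in I^{0}$, so $h_{b,r}(v)^{-1} F(h_{b,r}(v)) \in I^{0} w_{r} I^{0}$ and $h_{b,r}(v) I^{0} \in X^{0}_{w_{r}}(b)(\overline{k})$. To land in the $\la$-component, I would lift $v \in \laczero$ to some $u \in \Lsymp$ by extending its $\varpi$-adic expansion past level $\gamma_{0,r}$; the orthogonality conditions (\ref{eq:deflac}) defining $\laczero$ are precisely what is needed to produce such a lift with $\langle u, F^{i}(u) \rangle = 0$ for $1 \leq i \leq n-1$. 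Then Lemma \ref{lemma:equivlac} gives $h_{b,r}(v) I^{0} = g_{b,r}(u) I^{0} \in X^{0}_{w_{r}}(b)_{\la}$. Scheme-theoretically, Proposition \ref{prop:cell containment} places the target inside the Schubert cell $I x_{r} I / I^{0}$, whose explicit coordinate parameterization (\cite[Lemma 4.7]{Chan2021}) exhibits the entries of the canonical representative of $h_{b,r}(v)$ as polynomial expressions (resp.\,polynomial expressions in a perfection) in the components of $v$, yielding an honest morphism of $\overline{\F}_{q}$-schemes.

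For injectivity in part (2), I would exploit that the first column of $h_{b,r}(v)$ equals $v$. If $h_{b,r}(v_{1}) I^{0} = h_{b,r}(v_{2}) I^{0}$, then $h_{b,r}(v_{1}) = h_{b,r}(v_{2}) P$ for some $P \in I^{0}$, and the first column on the right is a unit scalar multiple of $v_{2}$ plus $\breve{K}$-linear combinations of the remaining columns of $h_{b,r}(v_{2})$ whose entries have $\varpi$-adic order at least $r$. Matching this against the truncation exponents $\gamma_{0,r}(i)$ that cut out $\laczero$, together with the normalization that forces the top entry to be a Teichm\"uller lift (inherited from $\Lsymp$), pins down $v_{1} = v_{2}$. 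For surjectivity, given $gI^{0} \in X^{0}_{w_{r}}(b)_{\la}$, Theorem \ref{theorem:comparison} combined with Definition \ref{defn:component of adlv} yields $u \in \Lsymp$ with $gI^{0} = g_{b,r}(u) I^{0}$. Taking the entrywise $\varpi$-adic truncation of $u$ at the levels $\gamma_{0,r}$ and composing with the Teichm\"uller section produces $v \in [\la_{b,0,r}]$; the defining relations (\ref{eq:deflac}) are the truncations of $\langle u, F^{i}(u) \rangle = 0$, so $v \in \laczero$, and Lemma \ref{lemma:equivlac} then gives $h_{b,r}(v) I^{0} = gI^{0}$.

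The main obstacle is upgrading the set-theoretic bijection to a scheme-theoretic isomorphism. For this I would construct the inverse morphism directly by reading off the first-column entries of the Schubert-cell representative as regular functions on $X^{0}_{w_{r}}(b)_{\la}$, applying Teichm\"uller truncation to land in $[\la_{b,0,r}]$, and verifying that the conditions $g^{-1}F(g) \in I^{0} w_{r} I^{0}$ force the equations (\ref{eq:deflac}) to hold, so that the image lies in $\laczero$. The subtle point is the compatibility with perfection in mixed characteristic: the Teichm\"uller lift $[-]$ is only polynomial after passing to perfections, which is precisely why $X^{0}_{w_{r}}(b)_{\la}$ carries a perfect-scheme structure when $\chara K = 0$, and the truncation exponents in Definition \ref{definition:Lbmr} were calibrated so that this inverse is regular.
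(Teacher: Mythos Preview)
Your approach is essentially the paper's: both identify the first column of $h_{b,r}(v)$ with $v$, use Schubert cell coordinates to recover $v$ from $h_{b,r}(v)I^{0}$, and obtain surjectivity by truncating a lift $u \in \Lsymp$. The paper makes this precise by writing out the Schubert cell isomorphism explicitly and observing that the composite of $h_{b,r}$ with the projection $p$ to the first-column coordinates $\prod_{\alpha_{i,1}\in I} L_{[g(i,1),h(i,1))}U_{\alpha_{i,1}}$ is the natural closed immersion coming from the equalities $\gamma_{0,r}(i)=h(i,1)$; this gives the scheme-theoretic injectivity in one step and sidesteps your entrywise matching. Two small corrections: the conditions (\ref{eq:deflac}) are \emph{not} what produces a lift to $\Lsymp$ (such a lift exists already by the definition of $[\la_{b,0,r}]$); they are used in the preceding lemma to force $A_{b,r}\in I^{m}$, and in the surjectivity step they must be \emph{verified} for the truncated $v$ by a direct order estimate rather than read off as truncations of $\langle u,F^{i}(u)\rangle=0$. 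Also, the paper flags (Remark \ref{rem:cimistake}) that the Schubert cell isomorphism of \cite[Lemma 4.7]{Chan2021} is only well-defined after fixing a choice of lifts, and adopts the Teichm\"uller lift $[-]$ for this purpose---precisely the convention you invoke, but the dependence on that choice deserves explicit acknowledgment.
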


\begin{proof}
From Lemma \ref{lemma:order}, it is clear that assertion (1) holds true for $\overline{k}$-valued points.
Moreover by Proposition \ref{prop:cell containment},  $X_{w_{r}}^{m} (b)_{\la}$ is a locally closed subscheme of a higher affine Schubert cell $I x_{r} I /I^{m}$, which is described by \cite[Lemma 4.7]{Chan2021}.
Then the assertion (1) follows directly from the above description.
However, there seems to be subtlety in the statement of \cite[Lemma 4.7]{Chan2021} (see Remark \ref{rem:cimistake}).  
Therefore, we introduce the correct statement in the following.
We only treat the case where $k=1$ and $n$ is even since other cases can be proved similarly. 
Furthermore, we will reduce the argument to {\cred the} $\GL$ case for simplicity.
{\cred We use a general $m$ in the following argument to clarify when the condition $m=0$ is required.}
 
{\cred Let} $g^{0} (i,j)$ be the minimum order of the $(i, j)$-th entry of the right-hand side of (\ref{eq:Hcontain}), i.e.,
\[
G_{x,0} \cdot \diag(1, 1, \ldots, \varpi^{n}, \varpi^{n}) \cdot \diag (1, \varpi^{r}, \ldots, \varpi^{(n-1)r}, \varpi^{r}, \ldots, \varpi^{nr}).
\]
We use the notation $\tau$ defined in Definition \ref{defn:tau}.
For $1\leq i \neq j \leq 2n$, we put
\[
g(i,j) =
\begin{cases}
0 & \textup{ if } i>j, \\
1 & \textup{ if } j<i. \\
\end{cases}
\]
For such $(i,j)$, we also put
\[
h(i,j) :=
\begin{cases}
g^{0}(i, \tau(i)) + 1 - g^{0} (j, \tau(j)) &\textup{ if } \tau (i) < \tau (j), \\
g^{0}(i, \tau(i)) - g^{0} (j, \tau(j)) &\textup{ if } \tau(i) > \tau(j). \\
\end{cases}
\]
We define the set $I$ by
\[
I := \{
\alpha_{i,j} \mid g(i,j) < h(i,j)
\},
\]
here, $\alpha_{i,j}$ is the root of $\GL_{2n}$ corresponding to $(i,j)$-th entries.
We have
\begin{align}
\label{eqn:decomp}
\begin{aligned}
\psi: X^{m}_{w_{r}}(b)_{\mathcal{L}} \subset I x_{r} I / I^{m} &\subset I_{\GL} x_{r}  I_{\GL} / I^{m}_{\GL} \\
&\simeq
\prod_{\alpha_{i,1} \in I}
L_{[g(i,1), h(i,1))} U_{\alpha_{i,1}} 
\times
\prod_{\alpha_{i,j} \in I, j\neq 1} L_{[g(i,j),h(i,j)} U_{\alpha_{i,j}} \times I_{\GL}/I^{m}_{\GL}.
\end{aligned}
\end{align}
Here, $L_{[g(i,j),h(i,j)} U_{\alpha_{i,j}}$ is the $\overline{\F}_{q}$-perfect scheme defined in \cite[p.1815]{Chan2021}.
The last isomorphism $f$ is given by
\begin{equation}
\label{eqn:schubertisom}
f^{-1}((u_{i1}), (u_{ij}),g) = \prod_{\alpha_{i,1} \in I} [u_{i1}] \cdot \prod_{\alpha_{i,j} \in I, j \neq 1} [a_{ij}] \cdot x_{r} \cdot g I^{m}_{\GL},
\end{equation}
where the first product is with respect to any order, and the second product is taken with respect to the lexicographical order for $(j,i)$.
Moreover, [-] means the Teichm\"{u}ller lift (more precisely, the operation applying [-] in Definition \ref{definition:Lbmr} to each entry).
\begin{figure}
\centering
\captionsetup{justification=centering}
\[
\left(\begin{array}{cccc|cccc}
1 &0 & 0& 0&0&0&0&0\\
\co/ \p^{r} &1 &0&0&0&0&0&0 \\
\co/ \p^{1+2r} &\co/ \p^{1+r} & 1& \p / \p^{1+r}&0&0&0&0\\
\co/ \p^{r}  & \co/ \p & 0 &1 &0&0&0&0\\\hline
\co/ \p^{2+3r}   &\co/ \p^{2+2r} &\co/ \p^{1+r}&\co / \p^{2+2r}& 1 & \p / \p^{1+r} & 0& 0\\
\co/ \p^{1+2r} &\co/ \p^{r+2} & \co/\p & \co/\p^{1r} &0&1&0&0\\
\co / \p^{2+3r} &\co/ \p^{3+2r}&\co/ \p^{2+r}&\co/\p^{2+2r}&\co/\p&\co/\p^{1+r}&1&0\\
\co/\p^{2+4r}  &\co/\p^{2+3r}&\co/\p^{1+2r}& \co/ \p^{2+3r} & \co/\p^{r} & \co/\p^{1+2r} & \co/\p^{r} &1\\
\end{array}\right).
\]
\caption*{The shape of $(\prod_{\alpha_{i,j} \in I} L_{[g(i,j), h(i,j))} U_{\alpha_{i,j}})$ 
for $\GSp_{8}$.
}
\end{figure}

Since we only consider the perfect scheme structure on $\la_{coord}$, it suffices to show that the map $\psi \circ h_{b,r}$ defines a map of schemes over $\overline{\F}_{q}$.
By the proof of Lemma \ref{lemma:order}, we have
\[
h_{b,r} (v) I^{m}_{\GL} = \hpr_{b,r} (v) I^{m}_{\GL}.
\]
Therefore, it suffices to show that $f \circ \hpr_{b,r}$ define a morphism of schemes.
Each factor of $f(\hpr_{b,r} (v))$ can be computed as {\cred follows}:
For any $\hpr_{b,r} (v)$\, ($v\in \la_{\mathrm{coord}}$) there exist column-elementary transformations in $I$ whose composition $g \in I$ satisfies that
\[
\hpr_{b,r} (v) g^{-1} \in f( \prod_{\alpha_{i,j} \in I} L_{[g(i,j), h(i,j)}U_{\alpha_{i,j}} ).
\]
Such row elementary transformations are done by transforming the $1$st, \ldots, $(2n)$-th row in this order.
Note that, under such transformation of $\hpr_{b,r} (v)$, the order of the $(i,\tau(i))$-th entry is preserved by the same argument as in the proof of Proposition \ref{prop:cell containment}.
Since each coefficient of $\varpi$-adic expansion of $\hpr_{b,r} (v) g^{-1}$ is an algebraic function on $\la_{\mathrm{coord}},$ we have the assertion (1). 

Next, we will prove the assertion (2).
By the projection $p$ to $\prod_{\alpha_{i,1} \in I}
L_{[h(i,1), g(i,1))} U_{\alpha_{i,1}} $ followed by $\psi$, we have the following diagram:
\[
\xymatrix{
\laczero \ar[r]^-{h_{b,r}} \ar[rd] & X_{w_{r}}^{0} (b)_{\la} \ar[d]^{p} \\
&  \prod_{\alpha_{i,1} \in I} L_{[g(i,1), h(i,1))} U_{\alpha_{i,1}}
}
\]
By the definition of $h_{b,r}$ and the argument in the proof of the assertion (1),
$p\circ h_{b,r}$ is just a projection.
(here, we use the information of the order of products in (\ref{eqn:schubertisom})).
Note that, for $2 \leq i \leq 2n$, $\alpha$ and $\beta$ defined in Definition \ref{definition:Lbmr} satisfy 
\begin{equation}
\label{eqn:alphagh}
g(i,1) \leq \alpha(i) \,\textup{ and }\, \gamma_{m,r}(i)=\gamma_{0,r} (i) = h(i,1).
\end{equation}
Therefore the map $p \circ h_{b,r}$ is a natural closed immersion.
Now, it suffices to show that $h_{b,r}$ is surjective.
To this end, by Definition \ref{defn:component of adlv}  and Lemma \ref{lemma:equivlac}, it suffices to show that for any $u \in \Lsymp$, there exists an element $v \in \laczero$ such that
\begin{equation}
\label{eqn:inclusionbmr}
v \in g_{b,r} (u) ^{t}\!(\co^{\times}, \co, \ldots, \co).
\end{equation}
Let $a_{i}$ be a projection of $g_{b,r} (u) I$ to $L_{[g(i,1), h(i,1))} U_{\alpha_{i,1}}$.
We put $v= ^{t}\!(1, [a_{2}], \ldots, [a_{2n}])$. Here, $[-]$ means the canonical lift as in Definition \ref{definition:Lbmr} (3).
Then by (\ref{eqn:alphagh}),
we have
\[
v \in ^{t}\!(1, [\p^{\alpha(2)}/ \p^{\gamma_{0,r}(2)}], \ldots, [\p^{\alpha(2n)}/\p^{\gamma_{0,r}(2n)}]).
\] 
Moreover, (\ref{eqn:inclusionbmr}) holds true since $u\in \Lsymp$. 
Now it suffices to show that $v \in \laczero$.
By the equation (\ref{eqn:inclusionbmr}), clearly we have $v\in [\la_{b,0,r}]$.
It suffices to show that
the equation (\ref{eq:deflac}) holds true for $m=0$.
This part can be shown by direct computation.
\end{proof}

\begin{remark}
\label{rem:cimistake}
The statement of \cite[Lemma 4.7]{Chan2021} seems to be subtle.
In \cite[Lemma 4.7]{Chan2021}, they construct an isomorphism
\[
\prod_{\alpha \in S} L_{[f_{I}(\alpha), f(\alpha))} U_{\alpha} \times I/ I_{f} \simeq IxI/I_{f},
\]
where $f$ is a concave function such that the associated subgroup $I_{f}$ is normal in $I$, $f_{I}$ is the concave function of the Iwahori subgroup $I$, and the set $S$ is a certain set of roots (we omit the definition here). 
First, $f(\alpha)$ in the index of $L$ is unsuitable since it should depend on $x$ (see \cite[(6.19)]{Chan2021} for the correct formula for $\GL$). Indeed, if $I_{f} =I$, the left-hand side is $1$ though the right-hand side is non-trivial. 

Moreover, they define an isomorphism by
\begin{equation}
\label{eqn:arbitlift}
((a_{\alpha}), i) \mapsto \prod_{\alpha} \widetilde{a}_{\alpha} x I^{m},
\end{equation}
where $\widetilde{a}_{\alpha}$ is an arbitrary lift of $a_{\alpha}$.
However, in general, this morphism depends on the choice of lifts, and we need to fix a choice of lifts.
For example, consider
\[
g=
\left(\begin{array}{ccc}
1 &0&0  \\
\varpi&\varpi&0\\
\varpi&\varpi&\varpi^{2}
\end{array}\right)
I_{\GL_{3}}
=  
\left(\begin{array}{ccc}
1 &0&0  \\
0&\varpi&0\\
0&\varpi&\varpi^{2}
\end{array}\right)
I_{\GL_{3}}
\in I_{\GL_{3}} \diag(1,\varpi,\varpi^{2}) I_{\GL_{3}} / I_{\GL_{3}}.
\]
In this case, the correct isomorphism is given by
\[
L_{[0,1)} U_{\alpha_{2,1}} \times L_{[0,2)} U_{\alpha_{3,1}} \times (\textup{other terms}) \simeq I_{\GL_{3}} \diag(1,\varpi,\varpi^{2}) I_{\GL_{3}} / I_{\GL_{3}}.
\]
However, as above, 
if we do not fix the choice of lifts, the projection of $g$ in $L_{0,2} U_{\alpha_{3,1}}$ is not well-defined.
Therefore, to fix the isomorphism, we need to fix a lift.
As long as we modify the statement as above, then the proof can be done in the same way as in the proof of \cite[Lemma 4.7]{Chan2021}.
Note that we use the canonical lift $[-]$ (Definition \ref{definition:Lbmr}) in our paper.
Since $[0]=0$, the projection of $g$ in $L_{0,2} U_{\alpha_{3,1}}$ via our choice of isomorphism is $0$.
\end{remark}

\begin{remark}
{\cred The structure}
of $X_{w_{r}}^{m} (b)$ for $m>0$ is more subtle. 
Indeed, the argument of {\cred Proposition \ref{prop:strbylac} (2) does not apply for $m>0$}.
This is {\cred because} $X_{w_{r}}^{m}(b)_{\la} \nsubseteq I x_{r} I^{m}$ for $m>0$.
\end{remark}

Now we can prove Theorem \ref{theorem:restradlv}.

\noindent
\textit{Proof of Theorem \ref{theorem:restradlv}}. 
For simplicity, we consider the case where $\chara K =0$.
Let
\[
\la_{\mathrm{coord}}^{b,0,r} \rightarrow  \Lsympbar^{\mathrm{perf}}  
\]
be the natural projection. Note that this map factors through $\mathbb{P} (\Lsympbar^{\mathrm{perf}}) \subset \Lsympbar \subset \overline{V}$, 
which is isomorphic to $X_{\overline{w}}^{\overline{B}, \mathrm{perf}}$ by Lemma \ref{lemma:finitecomparison} and Lemma \ref{lemma:finitecomparisonodd}.
We want to show that this natural projection induces
\[
\la_{\mathrm{coord}}^{b,0,r} \simeq X_{\overline{w}}^{\overline{B}, \mathrm{perf}} \times \mathbb{A}^{\mathrm{perf}}.
\]
First, we consider the case where $n=1$ or $n=2$ for simplicity.
In this case, the embedding
$\la_{\mathrm{coord}}^{b,0,r} \subset [\la_{b,0,r}]$ is equality (Remark \ref{remark:lcoord}).
By the definition of $[\la_{b,0,r}]$, it suffices to show that
the projection $\la_{b,0,r}' \rightarrow \Lsympbar^{\mathrm{perf}}$ induces the decomposition 
\[
\la_{b,0,r}' \simeq X_{\overline{w}}^{\overline{B}, \mathrm{perf}} \times \mathbb{A}^{\mathrm{perf}}.
\]
This follows from Lemma \ref{lemma:affinefibration} and \ref{lemma:affinefibrationodd}.
In the general case, in addition to the argument of Lemma \ref{lemma:affinefibration} and \ref{lemma:affinefibrationodd}, further equations (\ref{equation:orthogonal}) need to be solved.
However, we can solve this equation by the same method as in Lemma \ref{lemma:affinefibration} and \ref{lemma:affinefibrationodd}.
{\cred This} finishes the proof.

\subsection{Family of finite type varieties $X_{h}$}
\label{subsection:familyXh}
In this section, we take $b$ as a special representative or a Coxeter representative.
\begin{definition}
\begin{enumerate}
\item
Suppose that $n$ is even and $k=1$.
We put
\[
\la^{(h)}:=  \p^{h} e_{1} \oplus \p^{h-1} e_{2} \oplus \cdots \oplus \p^{h-1} e_{n} \oplus \p^{h} e_{n+1} \oplus \p^{h} e_{n+2} \oplus \cdots \oplus \p^{h} e_{2n}.
\]
Moreover, we define the closed subscheme $X_{h}$ of $\la/ \la^{(h)}$ by
\[
X_{h}  := 
\left\{ v=(\overline{v_{1}}, \ldots \overline{v_{2n}}) \in \la / \la^{(h)} \left|
\begin{array}{l}
\langle v, F^{i}(v) \rangle = 0 \mod \p^{h+\lfloor \frac{i}{2} \rfloor}\quad (1 \leq i\leq n-1) \\
\langle v, F^{n}(v) \rangle \in \varpi^{n'} (\co_{K}/\p^{h+n'})^{\times}\\
\end{array}
\right.
\right\}.
\]
\item
Suppose that $n$ is odd and $k=1$. We put
\[
\la^{(h)} := \p^{h} e_{1} \oplus \p^{h-1} e_{2} \oplus \cdots \oplus \p^{h} e_{2n-1} \oplus \p^{h-1} e_{2n}
\]
if $k=1$.
We also put
\[
X_{h}  := 
\left\{ v=(\overline{v_{1}}, \ldots \overline{v_{2n}}) \in \la / \la^{(h)} \left|
\begin{array}{l}
\langle v, F^{i}(v) \rangle = 0 \mod \p^{h+\lfloor \frac{i}{2} \rfloor}\quad (1 \leq i\leq n-1) \\
\langle v, F^{n}(v) \rangle \in \varpi^{n'} (\co_{K}/\p^{h+n'})^{\times}\\
\end{array}
\right.
\right\}.
\]
\item 
Suppose that $k=0$.
We put 
\[
\la^{(h)} := \p^{h} e_{1} \oplus \cdots \oplus \p^{h} e_{2n}.
\]
We also put
\[
X_{h}  := 
\left\{ v=(\overline{v_{1}}, \ldots \overline{v_{2n}}) \in \la / \la^{(h)} \left|
\begin{array}{l}
\langle v, F^{i}(v) \rangle = 0 \mod \p^{h}\quad (1 \leq i\leq n-1) \\
\langle v, F^{n}(v) \rangle \in (\co_{K}/\p^{h})^{\times}\\
\end{array}
\right.
\right\}.
\]
\end{enumerate}
\end{definition}

\begin{proposition}
\label{surjectivity}
The natural map $\Lsymprat \rightarrow X_{h}$ is surjective.
\end{proposition}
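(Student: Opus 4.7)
The plan is to prove surjectivity on $\overline{k}$-points by Hensel-style successive approximation, generalizing the construction used in the proof of Lemma \ref{lemma:hogehoge}. Given $\overline{v} \in X_{h}(\overline{k})$, I would first choose an arbitrary set-theoretic lift $v_{0} \in \la$ of $\overline{v}$. By the defining conditions of $X_{h}$, this lift already satisfies $\langle v_{0}, F^{i}(v_{0}) \rangle$ vanishing and $\langle v_{0}, F^{n}(v_{0}) \rangle$ agreeing with an element of $\varpi^{\lfloor kn/2 \rfloor}\co_{K}^{\times}$, each modulo the prescribed power of $\p$. The task is then to perturb $v_{0}$ within $\la^{(h)}$ so that these congruences become exact.

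The iterative step is to construct a Cauchy sequence $v_{0}, v_{1}, v_{2}, \ldots$ in $\la$ with $v_{j+1} - v_{j} \in \la^{(h+j)}$, so that the symplectic condition at index $i$ holds for $v_{j}$ to precision $\p^{h+j+\lfloor ki/2 \rfloor}$ and $\langle v_{j}, F^{n}(v_{j}) \rangle$ agrees with a fixed $K$-rational element of $\varpi^{\lfloor kn/2 \rfloor}\co_{K}^{\times}$ to precision $\p^{h+j+\lfloor kn/2 \rfloor}$. Writing $\varepsilon := v_{j+1} - v_{j}$, the quadratic term $\langle \varepsilon, F^{i}(\varepsilon) \rangle$ is of strictly higher order, so the inductive step reduces to the linearized system
\[
\langle \varepsilon,\, F^{i}(v_{j}) \rangle + \langle v_{j},\, F^{i}(\varepsilon) \rangle \;\equiv\; -\langle v_{j},\, F^{i}(v_{j}) \rangle + \delta_{i} \pmod{\p^{h+j+1+\lfloor ki/2 \rfloor}},
\]
where $\delta_{i} = 0$ for $1 \le i \le n-1$ and $\delta_{n} \in K$ is a rational correction toward the target $\alpha$-value. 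Passing to the limit $v := \lim_{j} v_{j} \in \la$ yields an element satisfying the exact symplectic and rationality conditions, giving $v \in \Lsymprat$ with $v \equiv \overline{v} \pmod{\la^{(h)}}$.

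The main obstacle — and really the only nontrivial step — is the solvability of this linearized system at each stage. This is precisely the technical input that underlies \cite[Lemma 5.4]{Viehmann2008a} and is invoked in the proofs of Lemma \ref{lemma:hogehoge} and Lemma \ref{lemma:affinefibration}: one parameterizes $\varepsilon$ modulo the appropriate lattice level by the tuple of pairings $\xi_{i} := \langle \varepsilon, F^{i}(v_{j}) \rangle$ together with a piece transverse to $\mathcal{D}_{k} v_{j}$, expresses each equation using the expansion coefficients $q_{ij}$ of $F^{-i}(v_{j})$ against $F^{\bullet}(v_{j})$ — whose valuations are bounded below by $(i+j)/2$ via \cite[Lemma 5.2.4]{Kedlaya2005} — and inverts the resulting essentially triangular system recursively. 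Since the equations for $1 \le i \le n-1$ are homogeneous and $\delta_{n}$ can be chosen in $K$ at every stage, the $K$-rationality of $\langle v_{j}, F^{n}(v_{j}) \rangle$ is preserved, and the iteration converges to the desired lift.
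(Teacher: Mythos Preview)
Your proposal is correct and follows essentially the same strategy as the paper: the paper reduces to showing $X_{h+1}\to X_{h}$ is surjective and then solves the resulting linearized system in the $\xi_i$-coordinates using exactly the triangular argument from Lemma~\ref{lemma:affinefibration} (and \cite[Lemma~5.4]{Viehmann2008a}) that you invoke. The only refinement you are glossing over is that, in the case $k=1$, the paper inserts an intermediate object $X_h^{+}$ (with $\lceil i/2\rceil$ in place of $\lfloor i/2\rfloor$) so that the quadratic error term is genuinely negligible at the very first step; your Cauchy-sequence formulation absorbs this, but you should be aware that the ``strictly higher order'' claim for $\langle \varepsilon, F^{i}(\varepsilon)\rangle$ requires this extra care when $h+j$ is small.
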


\begin{proof}
We may assume that $b$ is a special representative.
It suffices to show that 
\[
X_{h+1} \rightarrow X_{h}
\]
is surjective.
For simplicity, we consider the case where $n$ is even and $k = 1$.
We also put
\[
X_{h}^{+}
:= \left\{ v=(\overline{v_{1}}, \ldots \overline{v_{2n}}) \in \la_{h} \left|
\begin{array}{l}
\langle v, F^{i}(v) \rangle = 0 \mod \p^{h+\lceil \frac{i}{2} \rceil}\quad (1 \leq i\leq n-1) \\
\langle v, F^{n}(v) \rangle \mod \p^{h+n'} \in (\co_{K}/\p^{h+n'})^{\times}\\
\end{array}
\right.
\right\}.
\]
For simplicity, we prove the surjectivity of
$
X_{2} \rightarrow X_{1}.
$
The surjectivity of $X_{1}^{+} \rightarrow X_{1}$ follows from the same argument as in the proof of Lemma \ref{lemma:affinefibration}. More precisely, we can show that
\[
X_{1}^{+} \times_{X_{1}} X_{1}^{\mathrm{perf}} \simeq \mathbb{A}^{\mathrm{perf}} \times X_{1}^{\mathrm{perf}}.
\]
Therefore, we will show the surjectivity of 
$
X_{2} \rightarrow X_{1}^{+}.
$
We also put 
\[
x_{i,j} \in \overline{\F}_{q} \textup{ is }
\textup{the image} \textup{ of }  p_{j} \textup{ of }
\begin{cases}
\varpi^{-1} \times \textup{ $i$-th component } &\textup{ if } n+1 \geq i \textup{ and } i \textup{ is odd,}\\
\textup{ $i$-th component } & \textup{ otherwise}. 
\end{cases}
\]
Here, $p_{j} \colon \co_{\breve{K}} \rightarrow \overline{\F}_{q}$ ($j \geq 0$) is the projection to the coefficient of $\varpi^{j}$ of the $\varpi$-adic expansion
(i.e., $x = \sum_{j \geq 0} [\p_{j}(x)] \varpi^{j}$).
Then 
\[
X_{2} \subset \mathbb{A}_{x_{1,0},x_{2,0}, \ldots, x_{2n-1,0}, x_{2n,0}, x_{1,1}, x_{3,1} \ldots, x_{n-1,1}, x_{n+2,1}, x_{n+4,1}, \ldots, x_{2n,1}}
\]
 is defined by the following equations {\cred \eqref{eq:firstXh}, \eqref{eq:secondXh}, \eqref{eq:secondXhprime}, \eqref{eq:thirdXh}, and \eqref{eq:thirdXhprime}}.

{\cred
\begin{align}
\begin{aligned}
\label{eq:firstXh}
&p_{i}(\langle v, F^{2i-1} (v) \rangle) \\ 
=&\; \left(
 \sum_{\substack{1 \leq j \leq n\\ j \colon \textup{odd}}} (x_{j,0} x_{2n-j,0}^{q^{2i-1}} + x_{2n-j,0} x_{j,0}^{q^{2i-1}})  - \sum_{\substack{1 \leq j \leq n\\ j\colon \textup{even}}} (x_{j,0} x_{2n-j+2,0}^{q^{2i-1}} + x_{2n-j+2,0} x_{j,0}^{q^{2i-1}})
 \right)\\
=&\;0 \quad \textup{for $i=1, \ldots, n'$.}
\end{aligned}
\end{align}
\begin{align}
\begin{aligned}
\label{eq:secondXh}
p_{i}(\langle v, F^{2i} (v) \rangle) 
&= \left( \sum_{\substack{1\leq j \leq n \\ j\colon \textup{odd}}} (x_{j,0} x_{2n+1-j,0}^{q^{2i}} - x_{2n+1-j,0} x_{j,0}^{q^{2i}}) \right)\\
&=0 \quad \textup{for $i=1, \ldots, n'-1$.}
\end{aligned}
\end{align}
\begin{align}
\begin{aligned}
\label{eq:secondXhprime}
&p_{i+1}(\langle v, F^{2i} (v) \rangle) \\
=&\; \left( \sum_{1\leq j \leq n, j\colon \textup{odd}} (x_{j,1} x_{2n+1-j,0}^{q^{2i}} - x_{2n+1-j,1} x_{j,0}^{q^{2i}} + x_{j,0} x_{2n+1-j,1}^{q^{2i}} - x_{2n+1-j,0} x_{j,1}^{q^{2i}}) \right)\\
&+  P (x_{1,0}, \ldots, x_{2n,0}) = 0 \quad \textup{for $i=1, \ldots, n'-1$.}
\end{aligned}
\end{align}
Here, $P$ is a certain polynomial.
\begin{align}
\begin{aligned}
\label{eq:thirdXh}
& p_{n'}( \langle v, F^{n} (v) \rangle) \in \F_{q}^{\times}.
\end{aligned}
\end{align}
\[
p_{n'+1} (\langle v, F^{n} (v) \rangle) \in \F_{q}, 
\]
which is equivalent to
\begin{equation}
\label{eq:thirdXhprime}
Q^{q}-Q=0,
\end{equation}
where
\begin{align}
\begin{aligned}
Q:=&\;\left(  
\sum_{\substack{1\leq j \leq n \\ j\colon \textup{odd}}} (x_{j,1} x_{2n+1-j,0}^{q^{n}} - x_{2n+1-j,1} x_{j,0}^{q^{n}} + x_{j,0} x_{2n+1-j,1}^{q^{n}} - x_{2n+1-j,0} x_{j,1}^{q^n})
\right)\\
& + P (x_{1,0}, \ldots, x_{2n,0})
\end{aligned}
\end{align}
}
with some polynomial $P$.
Note that $X_{1}^{+}$ is defined by equations (\ref{eq:firstXh}), (\ref{eq:secondXh}), and (\ref{eq:thirdXh}).
Therefore, we should solve the equation $(\ref{eq:secondXhprime})$ and $(\ref{eq:thirdXhprime})$ with respect to 
\[x_{1,1}, x_{3,1}, \ldots, x_{n-1,1}, x_{n+2,1}, \ldots, x_{2n,1}.
\]
We define $v, v' \in W:= \overline{\F}_{q}^{\oplus 2n}$ by
\begin{eqnarray*}
v'&:=& \sum_{i=1}^{n'} x_{2i-1,0} e_{2i-1} + \sum_{i=n'+1}^{n} x_{2i,0} e_{2i},\\
v''&:=& \sum_{i=1}^{n'} x_{2i-1,1} e_{2i-1} + \sum_{i=n'+1}^{n} x_{2i,1} e_{2i}.
\end{eqnarray*}
We define the coordinates $\eta_{0}, \ldots \eta_{n-1}$ by
\[
\eta_{i} := \langle v'', \overline{\sigma}^{-2i} (v') \rangle,
\]
which is a linear coordinate transformation of
\[x_{1,1}, x_{3,1}, \ldots, x_{n-1,1}, x_{n+2,1}, \ldots, x_{2n,1}.
\]
over $\co_{\Lsympbar}^{\mathrm{perf}}$.
We also define the matrix $Q = (q_{i,j})_{1\leq i,j \leq n} \in \GL_{n} (\co_{\Lsympbar}^{\mathrm{perf}})$ by
\[
^{t}\! (\overline{\sigma}^{2}(v'), \ldots, \overline{\sigma}^{2n} (v')) = Q ^{t} \! (v', \overline{\sigma}^{-2} (v') \ldots, \overline{\sigma}^{-2n-2} (v')). 
\]
By the same argument as in the proof of Lemma \ref{lemma:affinefibration},
there exits 
\[
u= 
\left(\begin{array}{ccc|ccc}
1&&\Lsymb{0}&&&\\
&\ddots&&&\hsymb{0}&\\
\Lsymb{\ast}&&1&&&\\\hline
&&&1&&\\
&\hsymb{0}&&&\ddots&\\
&&&\Lsymb{0}&&1
\end{array}\right) \in \GL_{n}(\co_{\Lsympbar}^{\mathrm{perf}}),
\]
such that $Q' := uQ$ is a matrix whose $(i,j)$-entries $q'_{i,j}$ ($i > n+1-j, j \geq n'$) are $0$.
The equations (\ref{eq:secondXhprime}) and (\ref{eq:thirdXhprime}) are equivalent to the following equations {\cred \eqref{eqn:equivfirst} and \eqref{eqn:equivsecond}}.
\begin{itemize}
\item
\[
\eta_{i}^{q^{2i}} + \sum_{j=1}^{n} q_{i,j} \eta_{j-1} +P =0
\]
for $i=1, \ldots, n'-1$. Here, $P$ is a certain polynomial of $x_{1,0}, \ldots, x_{2n,0}$.
\item
\begin{align*}
&(\eta_{n'}^{q^{n}} + \sum_{j=1}^{n} q_{n'j} \eta_{j-1} +P)^{q} \\
-&(\eta_{n'}^{q^{n}} + \sum_{j=1}^{n} q_{n'j} \eta_{j-1} +P) =0,
\end{align*}
where $P$ is a certain polynomial of $x_{1,0}, \ldots, x_{2n,0}$.
\end{itemize}
By using the $n' \times n$ upper half part $u^{\mathrm{up}}$ of $u$,
we can organize {\cred the} equations as follows:
{\cred
\begin{equation}
\label{eqn:equivfirst}
\eta_{i}^{q^{2i}} + \sum_{j=1}^{n} q_{i,j} \eta_{j-1} +P =0 \quad \textup{for $i=1, \ldots, n'-1$,}
\end{equation}
{\cred where} $P$ is a certain polynomial of $x_{1,0}, \ldots, x_{2n,0}$.
\begin{equation}
\label{eqn:equivsecond}
(\eta_{n'}^{q^{n}} + \sum_{j=1}^{n} q_{n'j} \eta_{j-1} +P)^{q} 
-(\eta_{n'}^{q^{n}} + \sum_{j=1}^{n} q_{n'j} \eta_{j-1} +P) =0,
\end{equation}
where $P$ is a certain polynomial of $x_{1,0}, \ldots, x_{2n,0}$.

By using the $n' \times n$-upper half part $u^{\mathrm{up}}$ of $u$, we can organize equations \eqref{eqn:equivfirst} and \eqref{eqn:equivsecond} as 
\begin{eqnarray*}
&&(u^{\mathrm{up}} \,
{}^t\!(\eta_{1}^{q^2}, \ldots, \eta_{n'}^{q^{2n'}}))_{i} + \sum_{j=1}^{n+1-i}  q'_{i,j} \eta_{j-1} +P=0 \quad \textup{for $i=1, \ldots, n'-1,$} \\
 &&(u^{\mathrm{up}} \,
{}^t\!(\eta_{1}^{q^{2}}, \ldots, \eta_{n'}^{q^{2n'}}))_{n'} + \sum_{j=1}^{n+1-n'}  q'_{n',j} \eta_{j-1}  + P)^{q}\\
&-&  (u^{\mathrm{up}} \,
{}^t\!(\eta_{1}^{q^{2}}, \ldots, \eta_{n'}^{q^{2n'}}))_{n'} + \sum_{j=1}^{n+1-n'}  q'_{n',j} \eta_{j-1}  + P)=0.
\end{eqnarray*}
}
The first equations can be solved with respect to $\eta_{n-i}$.
On the other hand, the second equation can be solved with respect to $\eta_{n'}$.
Therefore, this finishes the proof of surjectivity.
\end{proof}
In the following, we assume that $b$ is a Coxeter representative.

\begin{definition}
\label{definition:Gh}
We define $\mathbb{G}$ to be the smooth affine group scheme over $\F_{q}$ such that 
\[
\mathbb{G} (\overline{\F}_{q}) = G_{x,0},   \quad \mathbb{G} (\F_{q}) = G_{x,0}^{F_{b}}.
\]
Here, we put
\[
F_{b} \colon \GSp_{2n} (\breve{K}) \rightarrow \GSp_{2n} (\breve{K}); g \mapsto b\sigma(g) b^{-1}.
\]
For $h \in \Z_{\geq 1}$, we define $\mathbb{G}_{h}$ to be the smooth affine group scheme over $\F_{q}$ such that
\[
\mathbb{G}_{h}(\overline{\F}_{q}) = G_{x,0}/ G_{x,(h-1)+}, \quad  \mathbb{G}_{h} (\F_{q})= G_{x,0}^{F_{b}}/ G_{x,(h-1)+}^{F_{b}}. 
\]
Let $\mathbb{U} \subset \mathbb{G}$ (resp.\,$\mathbb{U}^{-}\subset \mathbb{G}$) be the smooth subgroup scheme whose $\overline{\F}_{q}$-points are upper (resp.\,lower) triangular unipotent matrices of $G_{x,0}$.
We denote the corresponding subgroups of $\mathbb{G}_{h}$ by $\mathbb{U}_{h}$ and $\mathbb{U}_{h}^{-}$.
\end{definition}

The following is an analogue of \cite[Proposition 7.12]{Chan2021}
\begin{proposition}
\label{proposition:Xhcomparison}
\begin{enumerate}
\item
The subgroup
$\mathbb{U}_{h}^{-} \cap F_{b}(\mathbb{U}_{h}) \subset \mathbb{G}_{h}$ consisting of matrices of the following form.
\begin{align*}
\left(
\begin{array}{cccc|cccc}
1&&&&&&&\\
\ast&1&&&&&\\
\vdots&&\ddots&&&\hsymb{0}&&\\
\ast&&&1&&&&\\ \hline
&&&&1&&&\\
&&\hsymb{0}&&&\ddots&&\\
&&&&&&1&\\
\ast&&&&\ast&\cdots&\ast&1
\end{array}
\right)
\end{align*}
\item
The subgroup
$\mathbb{U}_{h}^{-} \cap F_{b}^{-1}(\mathbb{U}_{h}^{-}) \subset \mathbb{G}_{h}$ consisting of matrices of the following form.
\begin{align*}
\left(
\begin{array}{cccc|cccc}
1&&&&&&&\\
&1&&&&&\\
&\Lsymb{\ast}&\ddots&&&\hsymb{0}&&\\
&&&1&&&&\\ \hline
0&0&\cdots&0&1&&&\\
&&&\vdots&&\ddots&&\\
&\hsymb{\ast}&&0&&\Lsymb{\ast}&1&\\
&&&0&&&&1
\end{array}
\right)
\end{align*}
\item
We have an isomorphism
\begin{align*}
X_{h} (\overline{\F}_{q}) & \simeq 
\{
g \in \mathbb{G}_{h} (\overline{\F}_{q}) | g^{-1} F_{b} (g) \in \mathbb{U}_{h}^{-} \cap F_{b} (\mathbb{U}_{h})
\}\\
& \simeq \{ g \in \mathbb{G}_{h} (\overline{\F}_{q}) | g^{-1} F_{b} (g) \in \mathbb{U}_{h}^{-} \}/ (\mathbb{U}_{h}^{-} \cap F_{b}^{-1} (\mathbb{U}_{h}^{-})).
\end{align*}
\end{enumerate}
\end{proposition}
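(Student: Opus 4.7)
The plan is to prove the three parts in the order (1), (2), (3), with the first two reduced to explicit calculations on root subgroups, and the third derived from them using the parametrization from Remark \ref{remark:hb} combined with Proposition \ref{surjectivity}.

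For parts (1) and (2), I would analyze the action of $\mathrm{Ad}(b)$ (and $\mathrm{Ad}(b^{-1})$) on the matrix units $E_{ij}$. Since $b = b_{0}A_{k}$ where $b_{0}$ realizes the single long cycle
\[
\tau \colon 1 \to 2 \to \cdots \to n \to 2n \to 2n-1 \to \cdots \to n+1 \to 1
\]
on the standard basis (up to sign) and $A_{k}$ is diagonal with powers $\varpi^{\alpha(j)}$ ($\alpha(j) \in \{0,k\}$ depending on parity), one computes $bE_{ij}b^{-1} = \pm\varpi^{\alpha(i)-\alpha(j)}E_{\tau(i),\tau(j)}$. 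An element of $\mathbb{U}_{h}$ has prescribed strictly-upper-triangular support with $\varpi$-adic valuations dictated by the parahoric filtration of $G_{x,0}$ shown after Definition \ref{definition:Gh}; for its image under $F_{b}$ to lie in $\mathbb{U}_{h}^{-}$, we need exactly those indices $i<j$ for which $\tau(i)>\tau(j)$. Because $\tau$ is a single $2n$-cycle whose unique "descent" lies at the transition from position $n$ to position $2n$, a direct enumeration shows the surviving $(i,j)$ are precisely the first column and last row, matching the matrix claimed in (1). Part (2) is dual, computed with $\tau^{-1}$. The symplectic constraint is automatic since $b \in G(K)$ preserves it, and one need only check independent entries.

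For part (3), the candidate map sends $v \mapsto h_{b}(v) \bmod G_{x,(h-1)+}$, using $h_{b}$ from Remark \ref{remark:hb}. By the proof of Proposition \ref{prop:component}.(3) we have $h_{b}(v) \in G_{x,0}$ for $v \in \Lsymp$, and the analogue of Lemma \ref{lemma:order} for $h_{b}$ (with $r=0$) yields $F(h_{b}(v)) = h_{b}(v) \cdot w \cdot A$ with $A$ an explicit upper-triangular element whose off-diagonal entries are controlled mod $\p^{h+\lceil i/2\rceil}$ by the inner products $\langle v, F^{i}(v) \rangle$. A short calculation then gives
\[
h_{b}(v)^{-1} F_{b}(h_{b}(v)) = h_{b}(v)^{-1} F(h_{b}(v)) \, b^{-1} = w A b^{-1},
\]
and one checks that the right-hand side lies in $\mathbb{U}_{h}^{-} \cap F_{b}(\mathbb{U}_{h})$ if and only if $v$ satisfies the defining equations of $X_{h}$. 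Surjectivity onto $X_{h}$ uses Proposition \ref{surjectivity}, while the fibres are identified via the $h$-adic analogue of Proposition \ref{proposition:bmr}. The second isomorphism follows from a finite-dimensional Lang--Steinberg-type uniqueness: given $g$ with $g^{-1}F_{b}(g) \in \mathbb{U}_{h}^{-}$, the set of right multipliers $u \in \mathbb{U}_{h}^{-} \cap F_{b}^{-1}(\mathbb{U}_{h}^{-})$ moving $(gu)^{-1}F_{b}(gu)$ into the smaller intersection $\mathbb{U}_{h}^{-} \cap F_{b}(\mathbb{U}_{h})$ forms a torsor under that group, yielding the quotient description.

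The main obstacle is the careful bookkeeping of $\varpi$-adic valuations in parts (1) and (2): the parahoric $G_{x,0}$ imposes non-uniform valuation conditions on off-diagonal entries, and the scalar shift $\varpi^{\alpha(i)-\alpha(j)}$ from $A_{k}$ (present only when $k=1$) can move an entry between filtration layers. One must verify that, after conjugation by $b$, each allowed entry of $\mathbb{U}_{h}$ lands \emph{exactly} at the correct filtration level of $\mathbb{U}_{h}^{-}$, so that the intersection is a clean closed subgroup in $\mathbb{G}_{h}$ rather than a deeper stratum. For (3), an additional subtlety beyond the $\GL_{n}$ argument of \cite[Proposition 7.12]{Chan2021} is that the symplectic inner product conditions $\langle v, F^{i}(v)\rangle=0$ come in correlated pairs through the symplectic form, and one must check that the resulting shape of $h_{b}(v)^{-1}F_{b}(h_{b}(v))$ hits the intersection cut out by (1) and not a larger subgroup.
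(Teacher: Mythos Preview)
Your overall strategy matches the paper's: parts (1)--(2) by direct root-subgroup computation, and part (3) via the map $v \mapsto h_b(v)$ of Remark~\ref{remark:hb} together with Proposition~\ref{surjectivity}. Two points, however, need correction.

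\textbf{The transition formula for $h_b$.} The identity $F(h_b(v)) = h_b(v)\cdot w\cdot A$ does not hold in this form. The recursion in Remark~\ref{remark:hb} gives $F(H_{i+1}) = \varpi^{k}\tfrac{\sigma(\alpha_v)}{\alpha_v} H_i + (*)\,F^n(v)$, so $F$ shifts $H_{i+1}\rightsquigarrow H_i$ (the opposite direction to the $G_i$ in $g_{b,r}$). Consequently $h_b(v)^{-1}F_b(h_b(v))$ lands \emph{directly} in the shape described in (1), without an intermediate Weyl factor. Concretely, when $k=1$ one computes that $w_0 b^{-1}$ is a non-identity diagonal matrix, so $wAb^{-1}$ cannot be unipotent for any upper-triangular $A$; your formula would therefore never produce an element of $\mathbb U_h^-$. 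The paper avoids this by computing $h_b(v)^{-1}F_b(h_b(v))$ column by column, and for surjectivity it takes $g$ in the target, sets $v$ equal to the first column $g_1$, and reads off from the shape in (1) that $g = h_b(v)$ and $v\in X_h$. (Proposition~\ref{surjectivity} is used for well-definedness of $\lambda$, to lift $\bar v\in X_h$ to $\Lsymprat$, not for surjectivity of $\lambda$.)

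\textbf{The second isomorphism.} Your torsor intuition is correct, but the substance is the statement that the twisted product
\[
(\mathbb U_h^- \cap F_b^{-1}(\mathbb U_h^-)) \times (\mathbb U_h^- \cap F_b(\mathbb U_h)) \longrightarrow \mathbb U_h^-, \qquad (x,g)\mapsto x^{-1}gF_b(x)
\]
is an isomorphism (Lemma~\ref{lemma:Uh} in the paper). This is not a formal Lang--Steinberg consequence: $F_b$ does not act nilpotently on the Lie algebra of $\mathbb U_h^-$, and one must solve the equation $xA = gF_b(x)$ entry by entry, recursively down the rows, using the explicit support description from (2) to see that each new row of $x$ is uniquely determined. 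The paper carries this out as a separate lemma.
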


\begin{proof}
(1) and (2) follows from the direct computation.
Therefore, we will prove (3).
The second isomorphism follows from Lemma \ref{lemma:Uh}. Therefore, we will prove the first isomorphism.
We will show that 
\[
\lambda \colon X_{h} (\overline{\F}_{q}) \rightarrow \{
g \in \mathbb{G}_{h} (\overline{\F}_{q}) | g^{-1} F_{b} (g) \in \mathbb{U}_{h}^{-} \cap F_{b} (\mathbb{U}_{h})
\}; \overline{v} \rightarrow \overline{g'_{b} (v)}
\]
gives the desired isomorphism.
Here, $g'_{b}(v)$ is defined in Remark \ref{remark:hb}.
Since any element of $X_{h}$ can be lifted to $\la_{b}^{\mathrm{symp,rat}}$ by Proposition \ref{surjectivity}, 
we can show that $\lambda$ is well-defined by the same argument as in the proof of Lemma \ref{lemma:order}.
Moreover, $\lambda$ is clearly injective.
Therefore, it suffices to show the surjectivity of $\lambda$.
We take an element $g = (g_{1}, \ldots, g_{2n})$ in the right-hand side.
By (1) and the assumption 
\[
g^{-1} F_{b} (g) \in \mathbb{U}_{h}^{-} \cap F_{b} (\mathbb{U}_{h}),
\]
we have 
\begin{align*}
&(F(g_{1}), \ldots, F(g_{2n})) b^{-1} \\
= &(g_{1}, \varpi g_{2}, \ldots, t_{i} g_{i}, \ldots, t_{n-1} g_{n-1}, t_{n} g_{2n},\\
& t_{n+1} ( (-1)^{n+1} g_{1} + \sum_{i=2, \ldots, n, 2n} \ast g_{i} ), t_{n+2} ( g_{n+1} + \ast g_{2n}), \ldots, t_{2n} (g_{2n-1} + \ast g_{2n})) b^{-1}
\end{align*}
in $\mathbb{G}_{h} (\overline{\F}_{q})$.
Here, we put
\[
t_{i} =
\begin{cases}
1  &\textup{ if $i$ is odd,}\\
\varpi &\textup{ if $i$ is even}.
\end{cases}
\]
By this equation, we can show that
$g_{1} \in X_{h}$ and $g = \lambda (g_{1})$.
\end{proof}

\begin{lemma}
\label{lemma:Uh}
The morphism 
\[
(\mathbb{U}_{h}^{-} \cap F_{b}^{-1} (\mathbb{U}_{h}^{-})) \times (\mathbb{U}_{h}^{-} \cap F_{b} (\mathbb{U}_{h})) \rightarrow \mathbb{U}_{h}^{-}; (x,g) \mapsto x^{-1}g F_{b}(x)
\]
is an isomorphism.
\end{lemma}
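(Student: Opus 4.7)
The plan is to prove the lemma by constructing the inverse morphism iteratively along the unipotent filtration of $\mathbb{U}_{h}^{-}$. Write $U_{1} := \mathbb{U}_{h}^{-} \cap F_{b}(\mathbb{U}_{h})$ and $U_{2} := \mathbb{U}_{h}^{-} \cap F_{b}^{-1}(\mathbb{U}_{h}^{-})$. As a preliminary step, I would first verify that the plain multiplication map $U_{2} \times U_{1} \to \mathbb{U}_{h}^{-}$, $(x,g) \mapsto xg$, is an isomorphism of schemes. Inspecting the matrix shapes from Proposition \ref{proposition:Xhcomparison}(1)(2), the root positions filled by $U_{1}$ (the first column and last row of $\mathbb{U}_{h}^{-}$) and by $U_{2}$ (the complementary lower-triangular positions) are disjoint and together exhaust the strict lower triangle, so $U_{1} \cap U_{2} = \{1\}$ and $\dim U_{1} + \dim U_{2} = \dim \mathbb{U}_{h}^{-}$; as both subgroups are smooth unipotent with complementary root support, the multiplication map is an isomorphism.

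Given this, the lemma reduces to showing that for each $u \in \mathbb{U}_{h}^{-}$ there is a unique $x \in U_{2}$ such that $xuF_{b}(x)^{-1} \in U_{1}$, with $g := xuF_{b}(x)^{-1}$ then recovered uniquely. I would produce $x$ by induction on $h$, using the short exact sequence $1 \to K_{h} \to \mathbb{U}_{h}^{-} \to \mathbb{U}_{h-1}^{-} \to 1$ coming from the deepest step $G_{x,(h-1)+}$ of the parahoric filtration; the kernel $K_{h}$ is abelian, and $F_{b}$ acts on it as a linear endomorphism. The base case $h=1$ is the standard decomposition $\mathbb{U}_{1}^{-} = (\mathbb{U}_{1}^{-} \cap F_{b}^{-1}(\mathbb{U}_{1}^{-})) \cdot (\mathbb{U}_{1}^{-} \cap F_{b}(\mathbb{U}_{1}))$ for the finite reductive quotient $\mathbb{G}_{1}$, a Bruhat-type statement compatible with the Coxeter element attached to $b$. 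For the inductive step, by the induction hypothesis one first finds $x \in U_{2}$ with $xuF_{b}(x)^{-1} \in U_{1} \cdot K_{h}$, and the residual problem is to modify $x$ within $K_{h} \cap U_{2}$ so as to kill the $U_{2}$-part of the remaining error; this reduces to a linear equation of the form $\xi - F_{b}(\xi) = \eta$ in the abelian group $K_{h} \cap U_{2}$.

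The main obstacle is verifying that this linear equation is uniquely solvable, i.e.\ that the twisted Lang operator $1 - F_{b}$ is a bijection on $K_{h} \cap U_{2}$. Since $K_{h}$ decomposes as a sum of root spaces and $F_{b}$ cyclically permutes the $U_{2}$-root spaces according to the Coxeter-type action on the affine root datum associated to $b$, every $F_{b}$-orbit on the relevant root spaces has length $\geq 2$; hence $F_{b}$ has no nonzero fixed vectors on $K_{h} \cap U_{2}$ and $1-F_{b}$ is invertible. Since the iteratively constructed $x$ depends algebraically on $u$ (the linear algebra at each step is solved by an explicit polynomial formula in the entries of $u$), this assembles into a morphism of schemes, which is the inverse of $(x,g) \mapsto x^{-1} g F_{b}(x)$, yielding the desired isomorphism.
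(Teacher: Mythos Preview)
Your preliminary step is false. The product decomposition of $\mathbb{U}_h^-$ complementary to $U_1 = \mathbb{U}_h^- \cap F_b(\mathbb{U}_h)$ is $\mathbb{U}_h^- \cap F_b(\mathbb{U}_h^-)$, not $U_2 = \mathbb{U}_h^- \cap F_b^{-1}(\mathbb{U}_h^-)$; since the Coxeter element $w$ underlying $b$ satisfies $w \neq w^{-1}$ for $n \geq 2$, these two subgroups differ. In fact a direct root computation (already for $n=2$ or $n=3$) shows $U_1 \subseteq U_2$: every negative root $\alpha$ with $w^{-1}\alpha > 0$ also has $w\alpha < 0$. Hence $U_1 \cap U_2 = U_1 \neq \{1\}$ and plain multiplication $U_2 \times U_1 \to \mathbb{U}_h^-$ has image $U_2 \subsetneq \mathbb{U}_h^-$. (The matrix shape in Proposition~\ref{proposition:Xhcomparison}(2) is easy to misread here; the first-column entries of $U_2$ are not all zero, as the parametrisation used in the paper's own proof of the lemma confirms.) Your dimension count $\dim U_1 + \dim U_2 = \dim \mathbb{U}_h^-$ is correct but does not yield a product decomposition, so there is no well-defined ``$U_2$-part of the error'' on which to run your induction.

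Even setting this aside, the Lang-type step is wrong: for a $\sigma$-semilinear endomorphism permuting $\ell \geq 2$ root lines cyclically, the fixed-point set has order $q^\ell$, not $1$, so ``orbit length $\geq 2$'' does not make $1-F_b$ injective over $\overline{\mathbb F}_q$. What actually makes the equation uniquely solvable is a \emph{triangularity}, not a fixed-point-free, phenomenon: the Coxeter shift in $b$ sends the $(n{+}2{+}j)$-th row of $x$ into the $(n{+}1{+}j)$-th row of $F_b(x)$, so comparing the $(n{+}1{+}j)$-th rows of $xA = gF_b(x)$ determines the $(n{+}2{+}j)$-th row of $x$ from strictly earlier data. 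This is exactly how the paper proceeds: it writes out $xA = gF_b(x)$ entry by entry at the level of the full groups $U^- \subset \GSp_{2n}(\breve K)$, solves the resulting triangular system row by row to get explicit polynomial formulas for $(x,g)$ in terms of $A$, and then observes that these formulas respect the Moy--Prasad filtration, so the isomorphism descends to each finite level $\mathbb{U}_h^-$.
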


\begin{proof}
As in \cite[Lemma 7.13]{Chan2021}, we prove this lemma by direct computation.
First, we prove that 
\begin{equation}
\label{eq:fullconj}
(U^{-} \cap F_{b}^{-1} (U^{-})) \times (U^{-} \cap F_{b} (U)) \rightarrow U^{-}; (x,g) \mapsto x^{-1}g F_{b}(x)
\end{equation}
is an isomorphism.
Here, $U \subset  \GSp_{2n} (\breve{K})$ (resp.\,$U^{
-} \subset \GSp_{2n} (\breve{K})$)  be a subgroup consisting of upper (resp.\,lower) triangular unipotent matrices.

Take an element $A \in U^{-}$.
We want to show that there exists a unique pair 
\[
(x,g) \in (U^{-} \cap F_{b}^{-1} (U^{-})) \times (U^{-} \cap F_{b} (U)) 
\] 
such that 
\begin{equation}
\label{eq:compare}
xA = g F_{b} (x).
\end{equation}
Let$E_{i,j} \in \mathrm{M}_{2n \times 2n} (\breve{K})$ be a matrix whose $(s,t)$-component is $\delta_{i,s} \delta_{j,t}$, where $\delta$ is the Kronecker's delta.
We put
\begin{align*}
&g= 1 + \sum_{i,j} c_{i,j} E_{i,j},\\
&x = 1 + \sum_{i,j} x_{i,j} E_{i,j}, \\
&A = 1 + \sum_{i,j} a_{i,j} E_{i,j}.
\end{align*}
Then $c_{2,1}, \ldots c_{n,1}, c_{2n,1}$ (resp.\,$x_{i,j}$ ($1\leq i\leq n, 1\leq j \leq i-1$) and $x_{i,j}$ ($n+2\leq i \leq 2n, 1\leq j \leq 2n+1-i$)) determines $g$ (resp.\,$x$) since they are elements in $\GSp$.
We will compare the $(i,j)$-th entry of (\ref{eq:compare}) for $n+1 \leq i \leq 2n, 1 \leq j \leq 2n+1-i$ and $n+1 \leq i \leq 2n, n+1 \leq j\leq i-1$.
{\cred Note that, assuming both sides of (\ref{eq:compare}) lie in $\mathbb{U}^{-}(\overline{\F}_{q})$, the equality of all the entries above implies the full matrix equality (\ref{eq:compare}).}
We put 
\[
x'_{i,j} := \frac{t_{i}}{t_{j}} \sigma (x_{i,j}),
\]
where $t_{i}$ is as in the proof of Proposition \ref{proposition:Xhcomparison}.
We have
\[
F_{b} (x)
=
\left(
\begin{array}{ccccc|ccccc}
1&&&&&&&&&\\
0&1&&&&&&&&\\
0&x'_{2,1}& \ddots&&&&&&&\\
\vdots & \vdots&\ddots&\ddots&&&&&&\\
0&x'_{n-1,1}& \cdots & x'_{n-1,n-2} &1 &&&&&\\ \hline
-x'_{n+2,n+1} & x'_{n+2,1} & \cdots & x'_{n+2,n-2} & x'_{n+2,n-1} &1 &&&&\\
-x'_{n+3,n+1} & x'_{n+3,1} & \cdots & x'_{n+3,n-2} & x'_{n+3,n-1} & x'_{n+3,n+2} &\ddots &&&\\
\vdots & \vdots & \ddots & \vdots & \vdots & \vdots & \ddots & \ddots &&\\
-x'_{2n,n+1} & x'_{2n,1} & \cdots & x'_{2n,n-2} & x'_{2n,n-1} & x'_{2n,n+2} & \cdots & x'_{2n,2n-1} & 1 &\\
0 & x'_{n,1} & \cdots & x'_{n,n-2} & x'_{n,n-1} & 0 & \cdots & 0 & 0 &1 
\end{array}
\right).
\]
First, we compare the $n+1$-th column of (\ref{eq:compare}).
we have
\begin{align}
\label{eq:one}
\begin{aligned}
-x'_{n+2,n+1} &= a_{n+1,1}, \\
x'_{n+2,1} &= a_{n+1,2}, \\
&\vdots \\
x'_{n+2,n-1} &= a_{n+1,n}.
\end{aligned}
\end{align}
which determines the $n+2$-th row of $x$ uniquely.
Next, we compare the $n+1+j$-th row of (\ref{eq:compare}) ($1\leq j \leq n-2$).
We will show that the $n+2+j$-th row of $x$ are uniquely determined inductively.
Seeing the $(n+1+j,1)$-th entry, we have
\begin{align}
\label{eq:two}
-x'_{n+2+j,n+1} = x_{n+1+j,1} + \sum_{k=2}^{n-1} x_{n+1+j,k} a_{k,1} + \sum_{k=n+1}^{n+j} x_{n+1+j,k} a_{k,1} + a_{n+1+j,1}.
\end{align}
Seeing the $(n+1+j,l)$-th entry ($2 \leq  l \leq n-j$), we have
\begin{align}
\label{eq:three}
x'_{n+2+j, l-1}
= x_{n+1+j,l} + \sum_{k=l+1}^{n-1} x_{n+1+j,k} a_{k,l} + \sum_{k=n+1}^{n+j} x_{n+1+j,k} a_{k,l} + a_{n+1+j,l}.
\end{align}
Seeing the $(n+1+j,l)$-th entry ($n+1 \leq l \leq n+j$), we have
\begin{align}
\label{eq:four}
x'_{n+2+j, l+1} =x_{n+1+j,l} + \sum_{k=n+1}^{n+j} x_{n+1+j,k} a_{k,l} +a_{n+1+j,l}.
\end{align}
Inductively, these equations determine $x_{n+2+j,l}$ for $l=1, \ldots, n-j-1, n+1. \ldots, n+1+j$ uniquely.
The quality
\begin{align}
\label{eq:five}
\langle n+1+j\textup{-th row of } gF_{b} (x), n+1+l\textup{-th row of }gF_{b} (x)  \rangle =0
\end{align}
for $l=0, \ldots, j-1$ uniquely determines $x_{n+2+j, l}$ for $l= n-j, \ldots, n-1$.
Therefore, the $n+2+j$-th row of $x$ are uniquely determined.
Finally, we will see the $2n$-th row of (\ref{eq:compare}).
By seeing the $(2n,1)$-th entry and $(2n,n+j)$-th entries ($1\leq j \leq n$), we have
\begin{align}
\label{eq:six}
\begin{aligned}
&c_{2n,1} = x_{2n,1} + \sum_{k=2}^{n-1} x_{2n,k} a_{k,1} + \sum_{k=n+1}^{2n-1} x_{2n,k} a_{k,1} + a_{2n,1},\\ 
& (-1)^{n+1-j} c_{n+1-j,1} =x_{2n,n+j} + \sum_{k=n+j+1}^{2n-1} x_{2n,k} a_{k,n+j} + a_{2n,n+j},
\end{aligned}
\end{align}
which determines $c_{2,1}, \ldots, c_{n,1}, c_{2n,1}$ uniquely.
By the construction, these $x_{i,j}$, $c_{i,j}$ determines an element $x, g$ uniquely.
Therefore, we have the isomorphism (\ref{eq:fullconj}).

By $(\ref{eq:one}), \ldots, (\ref{eq:six})$, $A\in G_{x,0}$ corresponds to $x\in G_{x,0}$ and $g \in G_{x,0}$. Therefore, we have
\begin{equation}
\label{eq:Gxconj}
(\mathbb{U}^{-} \cap F_{b}^{-1} (\mathbb{U}^{-})) \times (\mathbb{U}^{-} \cap F_{b} (\mathbb{U})) \rightarrow \mathbb{U}^{-}; (x,g) \mapsto x^{-1}g F_{b}(x).
\end{equation}
Moreover, by estimating the order for $(\ref{eq:one}), \ldots, (\ref{eq:six})$, the isomorphism $(\ref{eq:Gxconj})$ induces the desired isomorphism.
\end{proof} 

\begin{remark}
\label{remark:Xhinf}
The variety $X_{h}$ here is isomorphic to the variety $X_{r}$ defined in \cite[Subsection 6.1]{Chan2021a} by Proposition \ref{proposition:Xhcomparison}.
Chan--Oi studied the Deligne--Lusztig induction by these varieties.
On the other hand, our $X_{h}$ satisfies that
\[
\mathop{\varprojlim}\limits_{h} X_{h} \simeq \la_{b}^{\mathrm{symp,rat}} \simeq \mathop{\varprojlim}\limits_{r>m}\dot{X}^{m}_{w_r}(b)_{\la} \subset \mathop{\varprojlim}\limits_{r>m}\dot{X}^{m}_{w_r}(b) \simeq X_{w}^{(U)}(b),
\]
i.e., the left-hand side can be regarded as a component of $X_{w}^{(U)} (b)$.
In this sense, we can say that Proposition \ref{proposition:Xhcomparison} translates \cite{Chan2021a}'s results into the realization of Lusztig's expectation in \cite{Lusztig1979}.
\end{remark}

\bibliographystyle{skalpha}
\bibliography{myref8.bib}

\end{document}